\begin{document}
\begin{CJK}{UTF8}{gbsn}

\theoremstyle{plain}
\newtheorem{theorem}{Theorem}[section] \newtheorem*{theorem*}{Theorem}
\newtheorem{proposition}[theorem]{Proposition} \newtheorem*{proposition*}{Proposition}
\newtheorem{lemma}[theorem]{Lemma} \newtheorem*{lemma*}{Lemma}
\newtheorem{corollary}[theorem]{Corollary} \newtheorem*{corollary*}{Corollary}

\theoremstyle{definition}
\newtheorem{definition}[theorem]{Definition} \newtheorem*{definition*}{Definition}
\newtheorem{example}[theorem]{Example} \newtheorem*{example*}{Example}
\newtheorem{remark}[theorem]{Remark} \newtheorem*{remark*}{Remark}
\newtheorem{hypotheses}[theorem]{Hypotheses} \newtheorem{assumption}[theorem]{Assumption}
\newtheorem{notation}[theorem]{Notation} \newtheorem*{question}{Question}

\newcommand{\ds}{\displaystyle} \newcommand{\nl}{\newline}
\newcommand{\eps}{\varepsilon}
\newcommand{\bE}{\mathbb{E}}
\newcommand{\cB}{\mathcal{B}}
\newcommand{\cF}{\mathcal{F}}
\newcommand{\cA}{\mathcal{A}}
\newcommand{\cM}{\mathcal{M}}
\newcommand{\cD}{\mathcal{D}}
\newcommand{\cH}{\mathcal{H}}
\newcommand{\cN}{\mathcal{N}}
\newcommand{\cL}{\mathcal{L}}
\newcommand{\cLN}{\mathcal{LN}}
\newcommand{\bP}{\mathbb{P}}
\newcommand{\bQ}{\mathbb{Q}}
\newcommand{\bN}{\mathbb{N}}
\newcommand{\bR}{\mathbb{R}}
\newcommand{\barsigma}{\overline{\sigma}}
\newcommand{\VIX}{\mbox{VIX}}
\newcommand{\erf}{\mbox{erf}}
\newcommand{\LMMR}{\mbox{LMMR}}
\newcommand{\cLcir}{\mathcal{L}_{\!_{C\!I\!R}}}
\newcommand{\rhor}{\raisebox{1.5pt}{$\rho$}}
\newcommand{\varphir}{\raisebox{1.5pt}{$\varphi$}}
\newcommand{\taur}{\raisebox{1pt}{$\tau$}}
\newcommand{\spx}{S\&P 500 }

\title{Quasi-Monte Carlo-Based Conditional Malliavin Method for Continuous-Time Asian Option Greeks}

\author{Chao Yu \and Xiaoqun Wang}

\maketitle

\begin{abstract}
Although many methods for computing the Greeks of discrete-time Asian options are proposed, few methods to calculate the Greeks of continuous-time Asian options are known. In this paper, we develop an integration by parts formula in the multi-dimensional Malliavin calculus, and apply it to obtain the Greeks formulae for continuous-time Asian options in the multi-asset situation. We combine the Malliavin method with the quasi-Monte Carlo method to calculate the Greeks in simulation. We discuss the asymptotic convergence of simulation estimates for the continuous-time Asian option Greeks obtained by Malliavin derivatives. We propose to use the conditional quasi-Monte Carlo method to smooth Malliavin Greeks, and show that the calculation of conditional expectations analytically is viable for many types of Asian options. We prove that the new estimates for Greeks have good smoothness. For binary Asian options, Asian call options and up-and-out Asian call options, for instance, our estimates are infinitely times differentiable. We take the gradient principal component analysis method as a dimension reduction technique in simulation. Numerical experiments demonstrate the large efficiency improvement of the proposed method, especially for Asian options with discontinuous payoff functions.
\end{abstract}

\section{Introduction}

\label{intro}
In mathematical finance,  there is a growing emphasis on efficient techniques for the computations of price sensitivities, i.e., the Greeks, rather than the price itself. The Greeks are the derivatives of the option price with respect to model parameters, which can be expressed as \begin{align*}
Greek:=\frac{\partial}{\partial \alpha} {\mathbb E}_{\mathbb Q}\left[e^{-rT}H\right],
\end{align*}
where ${\mathbb E}_{\mathbb Q}\left[e^{-rT}H\right]$ is the price of some option with the payoff $H$, and $\alpha$ is some model parameter. The Greeks play an important role in hedging financial derivatives and measuring and managing risk (see \cite{Hull22}). As pointed out by \citet{Glasserman04}, whereas the prices themselves can often be observed in the market, their sensitivities (Greeks) cannot. They could be only obtained by mathematical calculation or statistical methods. Thus, accurate and fast computation of sensitivities is arguably even more important from both theoretical and practical point of view. 

There are numerous ways to calculate the Greeks of discrete path-dependent options,  like the finite difference (FD) method \cite{Glasserman04}, the pathwise (PW) method \cite{Ho83, Broadie96} and the likelihood ratio (LR) method \cite{Glynn87, Broadie96}. However, when handling the Greeks of continuous path-dependent options, some of these methods have limitations in many cases due to the complexity of the continuous-time performance of the risky asset $S_t$. For example, the LR method could not be used directly to calculate the Greeks of continuous-time simple Asian options with payoff  $H=f(\bar S_T)$, where $\bar S_T=\frac{1}{T}\int_0^TS_t\mathrm{d}t$ is the continuous-time average of $S_t$ and $f$ is the payoff function, due to the unknown density of $\bar S_T$ (hereafter we omit the `continuous-time'). Moreover, the FD method produces biased estimates with large variance, and the PW method is invalid when the payoff function $f$ is discontinuous (see \cite{Glasserman04}). 

\citet{Fournie99} developed a new method to calculate the Greeks of options based on Malliavin calculus, and \citet{Benhamou00} generalized the method for the Asian options. The main idea is to use the integration by parts formula in Malliavin calculus to eliminate the need of taking the derivative of the payoff function. By this mean, the Greeks of  options  have the following expression
\begin{eqnarray*}
Greek=\mathbb{E}_{\mathbb Q}\left[e^{-rT}H\cdot weight\right],
\end{eqnarray*}
where $weight$ is some random variable.

\citet{Fournie99} gave a particular solution for the weighting function in the case of the $delta$ of simple Asian options with payoff  $H=f(\bar S_T)$ in the one-asset case. However, there are very few results in the literature with regard to complex Asian options with payoff $H=f(S_T,\bar S_T)$ and those in the multi-asset case. Notice that neither the extension from simple Asian options to complex Asian options nor that from the one-asset case to the multi-asset case is trivial since the original integration by parts formula cannot be applied directly (see Remarks \ref{rmrm1} and \ref{rmrm2} in Section \ref{MVcalculus}).

To our knowledge, only \citet{Benhamou00} considered the $delta$ of complex Asian options. However, the calculation of the weight function is very complicated. For the multi-asset case, although \citet{Xu14} gave the $delta$ and $gamma$ of simple Asian options, the calculation is complex. For complex Asian options in the multi-asset case, there is no result in the literature.

On the other hand, the Malliavin method gives us a chance to derive unbiased estimates of the Asian option Greeks. Nevertheless, we still need to use simulation to calculate Greeks since these estimates have no closed forms. The Monte Carlo (MC) and the quasi-Monte Carlo (QMC) methods are the usual simulation methods to estimate the integrals. Differently from the MC method, the QMC method uses deterministic low discrepancy points instead of random points to estimate the integrals. The QMC method provides a faster asymptotic convergence than MC for many financial problems  (see \cite{Glasserman04}). But the high dimensionality and the lack of smoothness limit the advantage of QMC over MC. The gradient principal component analysis (GPCA) method \cite{Xiao19} and the conditional quasi-Monte Carlo (CQMC) method \cite{Xiao18, Bayer18} are the two main powerful methods to overcome the high dimensionality and the lack of smoothness, respectively.

When applying the MC and the QMC methods to the Asian option Greeks, we need to discretize the path, which is different from the discrete path-dependent options. It leads to two important issues,  namely, how to deal with the bias of the discrete simulation estimates and how to reduce the variance.

For the first issue, the discrete simulation estimates are directly used to approximate the original estimates of Malliavin Greeks in most relevant literature (see \cite{Benhamou00,Xu14}). There is few literature discussing the bias of the simulation estimates. For the second issue, \citet{Fournie99} were the first to develop a variance reduction technique, which localizes the integration by part formula around the singularity, to estimate the Malliavin Greeks in the MC setting. \citet{Xu14} showed the superiority of the QMC method over the MC method. However, as they mentioned, no variance reduction technique is taken. We believe that a more powerful method combining the QMC method with a more general variance reduction technique could be developed.

Based on the multi-dimensional Malliavin calculus, we develop a general integration by parts formula, and apply it to obtain the Greeks formulae for both simple and complex Asian options in the multi-asset Black-Scholes model. In simulation, we prove the asymptotic unbiasedness of the simulation estimates when the payoff function $f$ is continuous with linear growth. Then based on the CQMC method, we develop a new method to calculate the the Asian option Greeks. We smooth the involved payoff functions by taking conditional expectations, which is proved to be continuous under some uniform convergence conditions, and derive new estimates of the Greeks. The main contributions of this paper are as follows:

\begin{itemize}
 \item In $m$-dimensional Malliavin calculus, we develop the integration by parts formula. Then we apply it to calculate the Greeks ($delta$, $gamma$ and $vega$) for both simple and complex Asian options in the $m$-asset Black-Scholes model. In particular, we obtain the Greeks formulae for European options.
\end{itemize}

\begin{itemize}
 \item We convert the estimates of the Malliavin Greeks to the discrete simulation estimates. For payoff functions with linear growth, we prove that the simulation estimates of Asian option Greeks are asymptotic unbiased.
\end{itemize}

\begin{itemize}
 \item We propose to apply the CQMC method to the simulation estimates for Greeks (obtained by Malliavin method) in continuous situation, which is a first attempt to combine these two methods to reduce the variance of estimates. We smooth the integrands for simulation estimates for Greeks by taking conditional expectations. The simulation estimates for Greeks are usually discontinuous. However, after taking conditional expectations, the estimates for many Asian option Greeks have good smoothness. After then some potent dimension reduction techniques (such as the GPCA method) can be combined with the QMC method to obtain good effectiveness, which could save much time in the practical computation.
  \end{itemize}

\begin{itemize}
 \item We show how the conditional expectations can be calculated analytically. Since the simulation estimates for many Asian option Greeks (obtained by Malliavin calculus) satisfy the separation of variables condition, our method has wide practicability.
\end{itemize}

This paper is organized as follows. In Section~\ref{MVcalculus}, we introduce the basic theory of the Malliavin calculus and generalize the integration by parts formula. In Section~\ref{Sec:model}, we use our formula to obtain the Greeks formulae for Asian options in the Black-Scholes model. We introduce the QMC method in Section~\ref{Sec-QMC}. The main idea of our method is introduced in Section~\ref{Sec:QMC-CMV}. The implementation is demonstrated on three examples in Section~\ref{Sub_ill}. Numerical experiments are performed in Section~\ref{sec:NE}. Finally, we summarize our conclusions in Section~\ref{sec:conclusion}.

\section{Malliavin calculus: basic theory and results}
\label{MVcalculus}
In this section, we briefly introduce the basic theory of Malliavin Calculus in the multi-dimensional case. In addition, we develop an integration by parts formula in order to calculate the Greeks for the complex Asian options in Section \ref{Sec:model}. For more details of this subject, we refer to \cite{Nualart06, Huang00, Ocone91}.

For convenience, we consider the classical Wiener space\footnote{In fact, this space is an irreducible Gaussian space $(\Omega,{\cal F},\mathbb{P};H)$. Here, $(\Omega,{\cal F},\mathbb{P})$ is a complete probability space, $H$ is a real separable Hilbert space, $\{U_h\}_{h\in H}$ is a family of Gaussian random variables satisfying $\mathbb{E}[U_h]=0$ and $\mathbb{E}[U_hU_g]=(h,g)_H$ (the inner product of $h$ and $g$ in $H$) for all $h,g\in H$, and ${\cal F}$ is the completion of the $\sigma$-algebra generated by $\{U_h\}_{h\in H}$ with respect to $\mathbb{P}$. We refer to \cite{Huang00} for more details.}$(\Omega, {\cal F},{\mathbb P}; H)$, which induces a filtered probability $(\Omega, {\cal F}, \{{\cal F}_t\}_{0\le t\le T},{\mathbb P})$ and an $m$-dimensional Brownian motion ${\bm W}=\{{\bm W}_t=(W_t^{(1)},\cdots, W_t^{(m)})'; 0\le t\le T\}$. Here, $\Omega$ is the space of all continuous ${\mathbb R}^m$-valued functions ${\bm\omega}$ on $[0,T]$ such that ${\bm\omega}(0)={\bm 0}$, ${\cal F}$ is the completion of the Borel $\sigma$-algebra ${\cal B}(\Omega)$ with respect to the Wiener measure ${\mathbb P}$, ${\bm W}_t({\bm\omega})={\bm\omega}(t)$ is the coordinate process, $\{{\cal F}_t\}_{0\leq t\leq T}$ is the $\mathbb P$-augmentation of the filtration generated by $\bm W$, which satisfies the usual condition (see \cite{Karatzas91}), and $H=L^2([0,T];{\mathbb R}^m)$. 

We denote by $C^\infty_p({\mathbb R }^n)$ the set of all infinitely continuously differentiable functions $\phi$ such that $\phi$ and all of its partial derivatives have
polynomial growth. Let $\cal S$ denote the class of smooth random variables such that a random variable $X\in{\cal S}$ has the form
\begin{equation*}
X=\varphi\left(\int_0^T{\bm h}_1(s)\cdot\mathrm{d}{\bm W}_{s},\cdots, \int_0^T{\bm h}_n(s)\cdot\mathrm{d}{\bm W}_{s}\right),
\end{equation*}
where $n\in {\mathbb N}_+$, $\varphi\in C^\infty_p({\mathbb R }^{m\times n})$ and ${\bm h}_j\in H$ for $j=1,\cdots, n$. Note that ${\cal S}$ is dense in $L^2(\Omega)$. The Malliavin gradient ${\bm D}_tX$ of the smooth random variable $X$ is defined as the $H$-valued random variable ${\bm D}_tX=(D^{(1)}_tX,\cdots,D^{(m)}_tX)'$ with components
\begin{equation*}
D_t^{(i)}X=\sum_{j=1}^n \frac{\partial \varphi}{\partial x_{ij}}\left(\int_0^T{\bm h}_1(s)\cdot\mathrm{d}{\bm W}_{s},\cdots, \int_0^T{\bm h}_n(s)\cdot\mathrm{d}{\bm W}_{s}\right){ h}^{(i)}_j(t),\quad i=1,\cdots,m.
\end{equation*}

We can check that ${\bm D}_t$ is a closable operator from ${\cal S}\subset L^2(\Omega)$ to $L^2(\Omega;H)$. Denote by  $D^{1,2}(\Omega)$ the closure of the class of smooth random variables ${\cal S}$ with respect to the graph norm (see \cite{Brezis10})
\begin{equation*}
\begin{aligned}
\| X\|_{D^{1,2}(\Omega)}:&=\left[\|X\|_{L^2(\Omega)}^2+\|{\bm D}_tX\|^2_{L^2(\Omega;H)} \right]^{\frac{1}{2}}\\
&=\left[ {\mathbb E} \left[|X|^2  \right]+    {\mathbb E}\left[\int_0^T\sum_{i=1}^m|D^{(i)}_tX|^2\mathrm{d}t\right]   \right]^{\frac{1}{2}}.
\end{aligned}
\end{equation*}
Then ${\bm D}_t$ is a closed dense operator from $L^2(\Omega)$ to $L^2(\Omega;H)$ with dense domain $\text{Dom}\ {\bm D}_t=D^{1,2}(\Omega)$, which is a Banach space under the norm $\Vert\cdot\Vert_{D^{1,2}(\Omega)}$.

Similar to the ordinary gradient operator ${\bm\nabla}_n$ in ${\mathbb R}^n$,  there is also a chain rule for the Malliavin gradient ${\bm D}_t$, which is important in the proof of the following integration by parts formula.

\begin{proposition}\label{chainrule}
\textnormal{(Proposition 1.2.4,  \cite{Nualart06})} Let $\varphi: {\mathbb R}^n\rightarrow {\mathbb R}$ be a Lipschitz function. Suppose that ${\bm X}=(X_1,\cdots,X_n)$ is a stochastic vector whose components belong to $D^{1,2}(\Omega)$, and the law of ${\bm X}$ is absolutely continuous with respect to the Borel-Lebesgue measure on ${\mathbb R}^n$. Then $\varphi({\bm X})\in D^{1,2}(\Omega)$, and 
\begin{equation}
{\bm D}_t\varphi({\bm X})=\sum_{j=1}^n\frac{\partial\varphi}{\partial x_j}({\bm X}){\bm D}_tX_j.
\end{equation}
\end{proposition}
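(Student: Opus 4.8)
The plan is to establish the chain rule in two stages: first for smooth $\varphi$ with bounded derivatives, and then for general Lipschitz $\varphi$ by mollification, using throughout the closedness of the operator ${\bm D}_t$ and, crucially, the absolute continuity of the law of ${\bm X}$.

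First I would treat the case $\varphi\in C^1$ with bounded partial derivatives. Here one approximates each component $X_j$ by a sequence of smooth random variables $X_j^{(k)}\in{\cal S}$ converging to $X_j$ in $D^{1,2}(\Omega)$. If in addition $\varphi\in C^\infty_p(\mathbb{R}^n)$, then $\varphi\bigl(X_1^{(k)},\dots,X_n^{(k)}\bigr)\in{\cal S}$ and the ordinary chain rule gives ${\bm D}_t\varphi({\bm X}^{(k)})=\sum_j\partial_j\varphi({\bm X}^{(k)}){\bm D}_tX_j^{(k)}$; since $\varphi$ and $\nabla\varphi$ are bounded and continuous, both sides converge in $L^2$, and the closedness of ${\bm D}_t$ yields $\varphi({\bm X})\in D^{1,2}(\Omega)$ together with the chain rule. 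A mollification $\varphi*\psi_\epsilon$ then removes the $C^\infty$ requirement: $\varphi*\psi_\epsilon\to\varphi$ uniformly on compacts, $\nabla(\varphi*\psi_\epsilon)\to\nabla\varphi$ pointwise, and both families are uniformly bounded, so one passes to the limit as before.

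For the general Lipschitz case, let $K$ be the Lipschitz constant and set $\varphi_\epsilon=\varphi*\psi_\epsilon$, a smooth function whose gradient is bounded by $K$. By the previous step, $\varphi_\epsilon({\bm X})\in D^{1,2}(\Omega)$ with ${\bm D}_t\varphi_\epsilon({\bm X})=\sum_j\partial_j\varphi_\epsilon({\bm X}){\bm D}_tX_j$. Since $\varphi$ is Lipschitz, $\varphi_\epsilon\to\varphi$ uniformly, hence $\varphi_\epsilon({\bm X})\to\varphi({\bm X})$ in $L^2(\Omega)$; moreover $\|{\bm D}_t\varphi_\epsilon({\bm X})\|_{L^2(\Omega;L^2([0,T];\mathbb{R}^m))}\le K\sum_j\|{\bm D}_tX_j\|$ uniformly in $\epsilon$. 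Thus $\{{\bm D}_t\varphi_\epsilon({\bm X})\}$ is relatively weakly compact in $L^2(\Omega;L^2([0,T];\mathbb{R}^m))$, and the closedness (hence weak closedness) of ${\bm D}_t$ gives $\varphi({\bm X})\in D^{1,2}(\Omega)$, with ${\bm D}_t\varphi({\bm X})$ equal to any weak limit point of the sequence.

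It remains to identify this limit, and this is where the absolute continuity hypothesis enters and where I expect the main difficulty to lie. By Rademacher's theorem $\nabla\varphi$ exists Lebesgue-a.e., and $\nabla\varphi_\epsilon=(\nabla\varphi)*\psi_\epsilon$ converges to $\nabla\varphi$ at every Lebesgue point, hence Lebesgue-a.e.; because the law of ${\bm X}$ is absolutely continuous, this upgrades to $\nabla\varphi_\epsilon({\bm X})\to\nabla\varphi({\bm X})$ almost surely. The convergence is dominated by the constant $K$, so $\partial_j\varphi_\epsilon({\bm X}){\bm D}_tX_j\to\partial_j\varphi({\bm X}){\bm D}_tX_j$ strongly in $L^2(\Omega;L^2([0,T];\mathbb{R}^m))$ for each $j$, which forces the weak limit to equal $\sum_j\partial_j\varphi({\bm X}){\bm D}_tX_j$ and shows the convergence is in fact strong. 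The subtlety is that without absolute continuity of the law of ${\bm X}$ the right-hand side need not even be well defined almost surely, since the Lebesgue-null set on which $\nabla\varphi$ fails to exist could carry positive mass under the law of ${\bm X}$; this hypothesis is precisely what makes the statement meaningful and the limiting argument go through.
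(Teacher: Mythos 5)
Your proof is correct, and it is essentially the proof of the cited result: the paper does not prove this proposition itself but quotes it as Proposition 1.2.4 of Nualart, whose argument is exactly yours --- mollify $\varphi$, apply the smooth chain rule, use the uniform Lipschitz bound to extract a weak limit and invoke the (weakly) closed graph of ${\bm D}_t$, and then identify the limit as $\sum_j\partial_j\varphi({\bm X}){\bm D}_tX_j$ via Rademacher's theorem combined with the absolute continuity of the law of ${\bm X}$. Your closing remark correctly pinpoints the role of that hypothesis (without it one only gets some bounded random vector ${\bm G}$ in place of $\nabla\varphi({\bm X})$), so there is nothing to fix.
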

 
 Now we define by ${\delta }: L^2(\Omega;H)  \rightarrow L^2(\Omega)$ with domain $\text{Dom}\ { \delta}$ the adjoint of the closed dense operator ${\bm D}_t:L^2(\Omega)\rightarrow L^2(\Omega;H)$ that is called the Malliavin divergence operator. Although $\delta$ is abstract to understand, it coincides with the It\^o integral with respect to ${\bm W}$ (see \cite{Karatzas91}) when ${\bm v}\in L^2(\Omega;H)\cong L^2(\Omega\times [0,T];{\mathbb R}^m)$ is an ${\cal F}_t$-adapted ${\mathbb R}^m$-valued process, namely, ${\delta}({\bm v})=\int_0^T {\bm v}_t\cdot \mathrm{d}{\bm W}_t $. We will keep use of the notation $\int_0^T{\bm v}_t\cdot\mathrm{d}{\bm W}_t:=\delta ({\bm v})$ when ${\bm v}\in \text{Dom}\ \delta $. Thus we also call $\delta $ the Skorohod integral.

The following lemma is the multiplication formula of $\delta$, which can be very useful to the calculations of Malliavin Greeks in Section \ref{Sec:model}.  

\begin{lemma}
\label{multidelta}
\textnormal{(Proposition 1.3.3,  \cite{Nualart06})} Let $X\in D^{1,2}(\Omega)$ and ${\bm v}\in \text{Dom}\ {\delta}$. Suppose that $X{\bm v}\in L^2(\Omega;H)$ and $X\delta({\bm v})- ( {\bm D} X ,  {\bm v})_H$ is square integrable. Then $X{\bm v}\in \text{Dom}\ {\delta}$ and 
\begin{equation}
{\delta}(X{\bm v})=X\delta({\bm v})- ( {\bm D} X ,  {\bm v})_H.
\end{equation}
\end{lemma}

In Proposition 6.2.1 of \cite{Nualart06}, Nualart gave an integration by parts formula, which plays an important role in calculating the Greeks of options in mathematical finance. Here we extend it to the high dimensional case in the following theorem, which will be strongly used in Section \ref{Sec:model} especially for the case of the complex Asian options in the multi-asset case.
 
\begin{theorem}\label{L0}
Let ${\bm X}_1,\cdots,{\bm X}_n$ be $m$-dimensional random row vectors whose components belong to $D^{1,2}(\Omega)$, and ${\bm Y}$ be an $n$-dimensional random vector. Consider an $H_1$-valued random vector ${\bm v}_t=(v^{(1)}_t,\cdots,v^{(n)}_t)$, where $H_1=L^2([0,T])$. Assume that the following conditions are satisfied
\begin{itemize}
 \item[(a)] There exists an invertible $m\times m$ real matrix ${\bm c}=({\bm c}_1,\cdots,{\bm c}_m)$ with inverse ${\bm b}=({\bm b}_1,\cdots,{\bm b}_m)'$, such that for all $j=1,\cdots,n$,  ${\bm D}_t{\bm X}_j=({\bm c}_1 X_t^{(j1)},\cdots,{\bm c}_mX_t^{(jm)})$ for some   $X^{(jk)}_t\in L^2(\Omega;H_1)$, $k=1,\cdots,m$;
\end{itemize}
\begin{itemize}
 \item[(b)] The law of $({\bm X}_1,\cdots,{\bm X}_n)$ is absolutely continuous with respect to the Borel-Lebesgue measure on ${\mathbb R}^{mn}$;
\end{itemize}
\begin{itemize}
 \item[(c)]  For all $k=1,\cdots,m$, the $n\times n$ matrix ${\bm\zeta}^{(k)}=(\zeta_{ij}^{(k)})_{n\times n}:=((X_t^{(jk)},v^{(i)}_t)_{H_1})_{n\times n}$ is invertible a.s. with inverse $\tilde{ {\bm\zeta}}^{(k)}$, and $\sum_{k=1}^n\sum_{j=1}^{n}Y_{j}\tilde\zeta_{jk}^{(l)}v_t^{(k)}{\bm b}_l' \in \text{Dom}\ \delta$, where $(\cdot,\cdot)_{H_1}$ is the inner product in $H_1$. 
 \end{itemize}
Then for any Lipschitz function $\varphi:{\mathbb R}^{mn}\rightarrow {\mathbb R}$, we have for all $ l=1,\cdots, m$,
\begin{equation}\label{byparts}
\mathbb{E}\left[  \sum_{j=1}^n\frac{\partial\varphi}{\partial x_{jl}} ({\bm X}_1,\cdots,{\bm X}_{n})Y_j \right]=\mathbb{E}\left[  \varphi({\bm X}_1,\cdots,{\bm X}_{n})\delta \left(\sum_{k=1}^n\sum_{j=1}^{n}Y_{j}\tilde\zeta_{jk}^{(l)}v_t^{(k)}{\bm b}_l' \right) \right].
\end{equation}
\end{theorem}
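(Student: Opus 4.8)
The plan is to reduce \eqref{byparts} to the adjointness of the Skorohod integral $\tilde\delta$ with respect to the Malliavin gradient $\bm D_t$, by first showing that the left-hand side of \eqref{byparts} equals $\mathbb E\big[\langle\bm D_t\varphi(\bm X_1,\dots,\bm X_n),\bm u_t\rangle_{L^2([0,T];\mathbb R^m)}\big]$, where $\bm u_t:=\sum_{k=1}^n\sum_{j=1}^nY_j\tilde\zeta^{(l)}_{jk}v^{(k)}_t\bm b_l'$ is exactly the process fed to $\tilde\delta$ on the right-hand side. Hypothesis (a) is what makes this work: it forces each $\bm D_tX_j^{(k)}$ to be the fixed vector $\bm c_k$ times a scalar process, so the argument is really finite-dimensional linear algebra performed $\omega$ by $\omega$. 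Write $X_j^{(k)}$ for the $k$-th component of $\bm X_j$. Since $\varphi$ is Lipschitz, the $X_j^{(k)}$ belong to $D^{1,2}(\Omega)$, and by (b) the law of $(\bm X_1,\dots,\bm X_n)$ is absolutely continuous on $\mathbb R^{mn}$; hence Proposition \ref{chainrule}, applied in dimension $mn$, gives $\varphi(\bm X_1,\dots,\bm X_n)\in D^{1,2}(\Omega)$ and, a.s.,
\[
\bm D_t\varphi(\bm X_1,\dots,\bm X_n)=\sum_{j=1}^n\sum_{k=1}^m\frac{\partial\varphi}{\partial x_{jk}}(\bm X_1,\dots,\bm X_n)\,\bm D_tX_j^{(k)}=\sum_{k=1}^m\bm c_k\sum_{j=1}^n\frac{\partial\varphi}{\partial x_{jk}}(\bm X_1,\dots,\bm X_n)\,X_t^{(jk)},
\]
where the last equality is hypothesis (a).

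Fix $l\in\{1,\dots,m\}$. Left-multiplying the display above by the row vector $\bm b_l'$ and using $\bm b_l'\bm c_k=\delta_{lk}$ (immediate from $\bm b=\bm c^{-1}$) collapses the $k$-sum and leaves the scalar $L^2([0,T])$-process
\[
\bm b_l'\,\bm D_t\varphi(\bm X_1,\dots,\bm X_n)=\sum_{j=1}^n\frac{\partial\varphi}{\partial x_{jl}}(\bm X_1,\dots,\bm X_n)\,X_t^{(jl)}.
\]
Pairing this in $L^2([0,T])$ with $v^{(i)}_t$, $i=1,\dots,n$, and recalling $(X_t^{(jl)},v^{(i)}_t)_{L^2([0,T])}=\zeta^{(l)}_{ij}$ from (c), one gets the $n\times n$ linear system $\big(\bm b_l'\bm D_t\varphi(\bm X),v^{(i)}_t\big)_{L^2([0,T])}=\sum_{j=1}^n\zeta^{(l)}_{ij}\,\tfrac{\partial\varphi}{\partial x_{jl}}(\bm X)$. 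Since $\bm\zeta^{(l)}$ is a.s.\ invertible with inverse $\tilde{\bm\zeta}^{(l)}$, I solve it for the unknowns $\tfrac{\partial\varphi}{\partial x_{jl}}(\bm X)=\sum_{k=1}^n\tilde\zeta^{(l)}_{jk}\big(\bm b_l'\bm D_t\varphi(\bm X),v^{(k)}_t\big)_{L^2([0,T])}$.

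Multiplying the last identity by $Y_j$, summing over $j$, and noting that $\big(\bm b_l'\bm D_t\varphi(\bm X),v^{(k)}_t\big)_{L^2([0,T])}=\big\langle\bm D_t\varphi(\bm X),v^{(k)}_t\bm b_l'\big\rangle_{L^2([0,T];\mathbb R^m)}$ gives
\[
\sum_{j=1}^n\frac{\partial\varphi}{\partial x_{jl}}(\bm X_1,\dots,\bm X_n)\,Y_j=\Big\langle\bm D_t\varphi(\bm X_1,\dots,\bm X_n),\,\bm u_t\Big\rangle_{L^2([0,T];\mathbb R^m)},\qquad\bm u_t=\sum_{k=1}^n\sum_{j=1}^nY_j\tilde\zeta^{(l)}_{jk}v^{(k)}_t\bm b_l',
\]
so $\bm u_t$ is precisely the integrand of $\tilde\delta$ in \eqref{byparts}. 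Taking expectations and using that $\varphi(\bm X_1,\dots,\bm X_n)\in D^{1,2}(\Omega)$, that $\bm u_t\in\text{Dom}\,\tilde\delta$ by (c), and that $\tilde\delta$ is the adjoint of $\bm D_t$, I conclude $\mathbb E\big[\langle\bm D_t\varphi(\bm X),\bm u_t\rangle\big]=\mathbb E\big[\varphi(\bm X)\,\tilde\delta(\bm u_t)\big]$, which is exactly \eqref{byparts}; both expectations are finite by the Cauchy--Schwarz inequality since $\varphi(\bm X)\in L^2(\Omega)$ and $\tilde\delta(\bm u_t)\in L^2(\Omega)$.

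The analytic content is light --- only Proposition \ref{chainrule} and the definition of $\tilde\delta$ as the adjoint of $\bm D_t$ --- so I expect the real care to be organisational: keeping the matrix conventions consistent (the rows $\bm b_l'$ of $\bm b=\bm c^{-1}$ versus the columns $\bm c_k$ of $\bm c$, and the index order in $\bm\zeta^{(l)}$ and $\tilde{\bm\zeta}^{(l)}$) so that $\bm u_t$ comes out verbatim as written in \eqref{byparts}; checking that for a merely Lipschitz $\varphi$ the derivatives $\partial\varphi/\partial x_{jk}$ are defined $(\bm X_1,\dots,\bm X_n)$-almost surely, which is exactly where the absolute continuity hypothesis (b) enters through Proposition \ref{chainrule}; and verifying that $\omega\mapsto\tilde{\bm\zeta}^{(l)}(\omega)$ is measurable and that the integrability needed for the duality step holds, which is what the $\text{Dom}\,\tilde\delta$ membership required in (c) supplies.
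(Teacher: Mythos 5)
Your proposal is correct and follows essentially the same route as the paper's proof: apply the chain rule (Proposition \ref{chainrule}) together with condition (a), left-multiply by ${\bm b}_l$ to isolate $\sum_j\frac{\partial\varphi}{\partial x_{jl}}X_t^{(jl)}$, pair with the $v^{(k)}_t$ to obtain the ${\bm\zeta}^{(l)}$-linear system, invert it, and conclude by the adjointness of $\tilde\delta$ and ${\bm D}_t$. The only difference is presentational: you spell out the duality and integrability step slightly more explicitly than the paper does.
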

 \begin{proof}
By Proposition \ref{chainrule}, we have 
\begin{equation*} 
\begin{aligned} 
&{\bm D}_t\varphi({\bm X}_1,\cdots,{\bm X}_n)\\
=&
\left(
      \begin{array}{ccc}
       {\bm D}_t{\bm X}_1 ,&\cdots, &{\bm D}_t{\bm X}_n
      \end{array}
      \right)    
      \left(
      \begin{array}{c}
        \frac{\partial\varphi}{\partial{\bm x}_1}({\bm X}_1,\cdots,{\bm X}_n)'\\
       \vdots\\
       \frac{\partial\varphi}{\partial{\bm x}_n}({\bm X}_1,\cdots,{\bm X}_n)'
      \end{array}
      \right)    \\
      =&\left(
      \begin{array}{ccc}
     ({\bm c}_1X^{(11)}_t ,\cdots, {\bm c}_mX_t^{(1m)}),&\cdots, &  ({\bm c}_1X^{(n1)}_t ,\cdots, {\bm c}_mX_t^{(nm)})
      \end{array}
      \right)    
      \left(
      \begin{array}{c}
        \frac{\partial\varphi}{\partial{\bm x}_1}({\bm X}_1,\cdots,{\bm X}_n)'\\
       \vdots\\
       \frac{\partial\varphi}{\partial{\bm x}_n}({\bm X}_1,\cdots,{\bm X}_n)'
      \end{array}
      \right)  .
\end{aligned}
\end{equation*} 
Multiplying both sides of the above equation by ${\bm b}_l$ yields
\begin{equation}
\begin{aligned}
\sum_{i=1}^mb_{li}D^{(i)}_t\varphi({\bm X}_1,\cdots,{\bm X}_n)=\sum_{j=1}^n\frac{\partial\varphi}{\partial x_{jl}}({\bm X}_1,\cdots,{\bm X}_n)X_t^{(jl)} . \label{sum111}
\end{aligned}
\end{equation}
 Take inner product of (\ref{sum111}) and $v^{(k)}_t$ in $L^2([0,T])$ for $k=1,\cdots, n$. Then we have
\begin{equation}\label{sum212}
\begin{aligned}
\left(
      \begin{array}{c}
       \int_0^T\sum_{i=1}^mD^{(i)}_t\varphi({\bm X}_1,\cdots,{\bm X}_n)b_{li}v_t^{(1)}\mathrm{d}t \\
        \vdots   \\
        \int_0^T\sum_{i=1}^mD^{(i)}_t\varphi({\bm X}_1,\cdots,{\bm X}_n)b_{li}v_t^{(n)}\mathrm{d}t 
      \end{array}
      \right)         
     =
     \left(
      \begin{array}{ccc}
       \zeta_{11}^{(l)}& \cdots  &\zeta_{1n}^{(l)} \\
        \vdots  &   \ddots  & \vdots  \\
       \zeta_{n1}^{(l)} & \cdots  & \zeta_{nn}^{(l)}
      \end{array}
      \right)  
         \left(
      \begin{array}{c}
        \frac{\partial\varphi}{\partial{ x}_{1l}}({\bm X}_1,\cdots,{\bm X}_n)\\
       \vdots\\
       \frac{\partial\varphi}{\partial{x}_{nl}}({\bm X}_1,\cdots,{\bm X}_n)
      \end{array}
      \right)  .
\end{aligned}
\end{equation}
Since ${\bm\zeta}^{(l)}$ is invertible a.s. with inverse $\tilde{\bm \zeta}^{(l)}$, we left multiply both sides of (\ref{sum212}) by ${\bm Y}\tilde{\bm \zeta}^{(l)}$. Then formula (\ref{byparts}) follows from the fact that $\delta$ is the adjoint of ${\bm D}_t$.
\end{proof}

\begin{remark}\label{rmrm1}
The Malliavin gradient of ${\bm X}_j$ can be decomposed into a coefficient matrix ${\bm c}$ and a series of processes $(X_t^{(j1)},\cdots, X_t^{jm})$ for $j=1,\cdots,n$ in Theorem \ref{L0}, which is the main extension from the original integration by parts formula in \cite{Nualart06}. That will be more convenient to calculate the Greeks in the multi-asset case in Section \ref{Sec:model}.
\end{remark}

\begin{remark}\label{rmrm2}
It might be unrealistic to obtain a useful integration by parts formula in the high dimensional case without the condition that ${\bm D}_t{\bm X}_j=({\bm c}_1X_t^{(j1)},\cdots,{\bm c}_mX_t^{(jm)})$ for $j=1,\cdots, n$, which means that ${\bm D}_t{\bm X}_1,\cdots,{\bm D}_t{\bm X}_n$ share the same coefficient matrix $\bm c$. This condition allows Theorem \ref{L0} to be used to calculate the complex Asian option Greeks in Section \ref{greekcomplex}.
\end{remark}

If we take $m=n=1$, $c=b=1$ and $X_t=D_tX$ in Theorem \ref{L0}, we can obtain the classical integration by parts formula in the following corollary. 

\begin{corollary}\label{cornu}
\textnormal{(Proposition 6.2.1,  \cite{Nualart06})} Let $m=1$. Suppose that $X\in D^{1,2}(\Omega)$, and ${Y}$ be a random variable. Consider an $H$-valued random variable ${ v}_t$. Assume that $(DX,v)_{H}\neq 0$, $Yv(DX,v)_{H}\in \text{Dom}\ \delta$, and the law of $X$ is absolutely continuous with respect to the Borel-Lebesgue measure on ${\mathbb R}$. Then for any Lipschitz function $\varphi:{\mathbb R} \rightarrow {\mathbb R}$, we have
\begin{equation} 
\mathbb{E}\left[ \varphi'(X)Y\right]=\mathbb{E}\left[  \varphi(X)\delta \left(Yv (DX,v)^{-1}_{H}  \right) \right].
\end{equation}
\end{corollary}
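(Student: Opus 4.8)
The plan is to derive this corollary as a direct specialization of Theorem \ref{L0}, so essentially no new work is needed beyond carefully tracking how each hypothesis and each term of \eqref{byparts} collapses when $m=n=1$. First I would set $m=n=1$ and choose the scalar $c=b=1$; then condition (a) of the theorem requires ${\bm D}_t{\bm X}_1 = {\bm c}_1 X_t^{(11)}$, which with $c=1$ just says $X_t := X_t^{(11)} = D_tX$, exactly the assignment named in the corollary. Condition (b) becomes absolute continuity of the law of $X$ on ${\mathbb R}$, which is assumed. For condition (c), the single entry of the $1\times 1$ matrix ${\bm\zeta}^{(1)}$ is $\zeta_{11}^{(1)} = (X_t^{(11)}, v_t^{(1)})_{L^2([0,T])} = (D_tX, v)_{L^2([0,T])}$, so invertibility a.s.\ is precisely the hypothesis $(DX,v)_{L^2([0,T])}\neq 0$, and $\tilde\zeta_{11}^{(1)} = (DX,v)^{-1}_{L^2([0,T])}$; the membership requirement $\sum_{k}\sum_j Y_j \tilde\zeta_{jk}^{(l)} v_t^{(k)} {\bm b}_l'\in\text{Dom}\,\tilde\delta$ reduces to $Yv(DX,v)^{-1}_{L^2([0,T])}\in\text{Dom}\,\tilde\delta$, again assumed.

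Next I would substitute into the conclusion \eqref{byparts}. On the left side, the sum $\sum_{j=1}^n \frac{\partial\varphi}{\partial x_{jl}}({\bm X}_1,\dots,{\bm X}_n)Y_j$ has a single term with $n=1$, $l=1$, namely $\frac{\partial\varphi}{\partial x_{11}}(X)\,Y$, which for a function $\varphi:{\mathbb R}\to{\mathbb R}$ is just $\varphi'(X)Y$. On the right side, the argument of $\tilde\delta$ collapses to the single term $Y\,\tilde\zeta_{11}^{(1)}\,v_t^{(1)}\,{\bm b}_1' = Yv(DX,v)^{-1}_{L^2([0,T])}$ (with $b_1'=1$). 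Hence \eqref{byparts} becomes $\mathbb{E}[\varphi'(X)Y] = \mathbb{E}[\varphi(X)\tilde\delta(Yv(DX,v)^{-1}_{L^2([0,T])})]$, which is the claimed identity.

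I would also note in passing that $\varphi$ being Lipschitz on ${\mathbb R}$ guarantees, via Proposition \ref{chainrule} together with the absolute continuity of the law of $X$, that $\varphi(X)\in D^{1,2}(\Omega)$ and that $\varphi'(X)$ exists Lebesgue-a.e.\ and is the a.s.\ derivative appearing on the left—this matches the use of Proposition \ref{chainrule} inside the proof of Theorem \ref{L0}, so no extra argument is required. There is really no substantive obstacle here; the only thing to be careful about is the bookkeeping of indices, in particular confirming that $X_t^{(11)}$ in the theorem is to be identified with $D_tX$ (not with $X$ itself) and that the scalar inverse matrix $\tilde{\bm\zeta}^{(1)}$ is literally the reciprocal of $(DX,v)_{L^2([0,T])}$, so that the weight $\tilde\delta(Yv(DX,v)^{-1}_{L^2([0,T])})$ comes out in exactly the stated form. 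Once these identifications are made explicit, the corollary follows immediately by reading off the $m=n=1$ case of Theorem \ref{L0}.
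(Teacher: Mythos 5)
Your proposal is correct and follows exactly the paper's own route: the paper obtains this corollary precisely by taking $m=n=1$, $c=b=1$ and $X_t=D_tX$ in Theorem \ref{L0}, which is the specialization you carry out, with the hypotheses and the weight $\tilde\delta\left(Yv(DX,v)^{-1}_{L^2([0,T])}\right)$ collapsing as you describe. No gaps; your index bookkeeping matches the intended identification.
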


\begin{remark}
When we consider the irreducible Gaussian space $(\Omega,{\cal F},\mathbb{P};H)$, the gradient operator and the divergence operator can be defined similarly. Proposition \ref{chainrule}, Lemma \ref{multidelta} and Corollary \ref{cornu} can also be extended to this case (see \cite{Nualart06}). However, we need an extra condition that $H= H_1\otimes \mathbb{R}^m$ for the extension of Theorem \ref{L0}, where $H_1$ is a real separable Hilbert space. This remark implies that these results can be applied to the classical Gaussian white noise space, which is also an irreducible Gaussian space (see \cite{Huang00,Di Nunno09}).
 \end{remark}

\section{Malliavin Greeks for Asian options}
 \label{Sec:model}
We assume that the financial market consists of one risk-free asset (bond) $B$ and $m$ risky assets (stocks) $S^{(1)}, \cdots, S^{(m)}$. Recall the Black-Scholes model in the multi-asset case (see \cite{Glasserman04,Karatzas91}). Fix a future time $T $ in years. Let ${\bm W}=\{{\bm W_t}=(W_t^{(1)},\cdots,W^{(m)}_t)';0\le t\le T\}$ be an $m$-dimensional Brownian motion on a filtered probability space $(\Omega,{\cal F},\{{\cal F}_t\}_{0\leq t\leq T},\mathbb{P})$, where $\{{\cal F}_t\}$ is the $\mathbb P$-augmentation of the filtration generated by ${\bm W}$, which satisfies the usual condition. The risky assets are the (unique) strong solutions of the following stochastic differential equation with deterministic  initial value for $i=1,\cdots,m$
 \begin{eqnarray*}\label{BS}
 \mathrm{d} S^{(i)}_t=\mu_i S^{(i)}_t  \mathrm{d} t +S^{(i)}_t\sum_{j=1}^m\sigma_{ij}  \mathrm{d}   W^{(j)}_t,\quad S_0^{(i)}>0,\quad  0\le t\le T.　
\end{eqnarray*}
Here, ${\bm\mu}=(\mu_1,\cdots,\mu_m)'$ is a real vector representing the expected return of the risky assets in a short period of time, ${\bm\sigma}=(\sigma_{ij})_{m\times m}$ is a real $m\times m$ matrix representing the volatility of the risky assets. By It\^{o}'s formula (see \cite{Karatzas91}), we can obtain the specific expression of $S_t^{(i)}$ as follows
\begin{equation*}
S_t^{(i)}=S_0^{(i)} \exp\left\{ \left(\mu_i-\frac{1}{2}\sum_{j=1}^m\sigma_{ij}^2\right) t+\sum_{j=1}^m\sigma_{ij} W_t^{(j)} \right\},\quad 0\leq t\leq T,
\label{Stock}　
\end{equation*}
for $i=1,\cdots, m$. The risk-free asset $B$ has the form $B_t=e^{rt}$, where $r>0$ is the risk-free interest rate. We set ${\bm a}:={\bm \sigma}{\bm \sigma}'$ and assume that ${\bm a}$ is positive definite, i.e., 
\begin{equation}
\label{zhengding}
{\bm x}'{\bm a}{\bm x}> 0,\quad \forall {\bm 0}\neq {\bm x}\in {\mathbb R}^m.
\end{equation}

Condition (\ref{zhengding}) implies that ${\bm \sigma}$ is invertible and $M_t:=\exp\{-{\bm \gamma}\cdot {\bm W}_t-\frac{1}{2}\|{\bm\gamma }\|^2t\}$ is a martingale, where ${\bm\gamma}={\bm\sigma}^{-1}({\bm\mu}-r\underset{\tilde{}}{\bm 1})$ and $\underset{\tilde{}}{\bm 1}=(1,\cdots,1)'\in {\mathbb R}^m$, thanks to the Novikov's criterion (see \cite{Karatzas91}). By the Girsanov's theorem (see \cite{Karatzas91}), $\tilde{\bm W}_t:={\bm W}_t+{\bm\gamma}t$ is an $m$-dimensional Brownian motion under the new probability measure (the risk-neutral probability measure) $\mathbb{Q}:=\int M_T\mathrm{d}{\mathbb P}$. Thus for $i=1,\cdots, m$, $S_{t}^{(i)}$ can be rewritten as
 \begin{equation}
S_t^{(i)}=S_0^{(i)} \exp\left\{\omega_i t+\sum_{j=1}^m\sigma_{ij} \tilde W_t^{(j)} \right\},\quad 0\leq t\leq T,
\label{Q-Stock}　
\end{equation}
where $\omega_i=r-\frac{1}{2}\sum_{j=1}^m\sigma_{ij}^2$.

\begin{remark}
In the Black-Scholes model, the one parameter that cannot be directly observed is the volatility ${\bm \sigma}$ of the risky assets (see \cite{Hull22}). There are two ways to estimate ${\bm \sigma}$. One way is by using the history data of the  risky assets price ${\bm S}$ to estimate ${\bm \sigma}$. The estimate is given by (see \cite{Hull22})
\begin{equation*}
\hat {\bm \sigma}=\frac{1}{\Delta t}\sqrt{\frac{1}{n-1}\sum_{j=1}^n\left( {\bm \varrho}_j  - \bar{\bm \varrho} \right)^2},
\end{equation*}
where ${\bm \varrho}_j:=\ln {\bm S}_j-\ln {\bm S}_{j-1}$, ${\bm S}_j$ is the risky assets price at end of $j$th interval, $n+1$ is the number of observations, and $\Delta t$ is the length of time interval in years. Another way is by using the implied volatility. The implied volatility is the value of ${\bm \sigma}$ that, when substituted into the Black-Scholes-Merton formula (see the equation (\ref{BS-value}) below and \cite{Karatzas91}), gives the observed market price $V$ of European options. 
\end{remark}

Now we assume that the payoff of the simple Asian option is $f(\bar {\bm S}_T)$ (see \cite{Fournie99}), where the payoff function $f:{\mathbb R}^m\rightarrow {\mathbb R}$ is a nonnegative measurable function such that $f(\bar {\bm S}_T)\in L^1(\Omega,{\cal F},{\mathbb Q})$, and 
\begin{equation*}
\bar {\bm S}_T=\frac{1}{T}\int_0^T {\bm S}_t\mathrm{d}t
\end{equation*}
is the continuous-time average of ${\bm S}_t$ over the time interval $[0,T]$. Then the value of  the option at time $0$ is given by (see \cite{Karatzas91})
 \begin{equation}\label{BS-value}　
V= \mathbb{E}_{\mathbb{Q}}\left [ e^{-rT}f(\bar{\bm S}_T)\right].
\end{equation}

The Greek of the option is the derivative (or higher derivative) of $V$ with respect to a model parameter $\alpha$ given by
 \begin{equation}\label{deri}
Greek:=\frac{\partial V}{\partial \alpha}=\frac{\partial}{\partial \alpha}\mathbb{E}_{\mathbb{Q}}\left [ e^{-rT}f(\bar {\bm S}_T)\right ],
\end{equation}
if the above derivative exists.

We only consider $\alpha= S_0^{(l)}$ and $\alpha=\sigma_{lp}$ for any fixed $l\in\{1,\cdots,m\}$ and $p\in\{1,\cdots,m\}$ in this paper. The corresponding Greeks are called $delta$ and $vega$, respectively. When two-order derivative is considered, and $\alpha_1= { S}_0^{(l)}$, $\alpha_2= { S}_0^{(p)}$ for any fixed $l\in\{1,\cdots,m\}$, $p\in\{1,\cdots,m\}$, the corresponding Greek is called $gamma$.

In financial market, $delta$, $gamma$ and $vega$ are the three most important Greeks for an option in practice (see \cite{Hull22}). The $delta$ is often used to hedge the financial derivates. More precisely, the risk in a short position in an option is offset by a delta-hedging strategy of holding $delta$ units of each (underlying) risky asset (see \cite{Glasserman04}). Another use of $delta$ is to calculate the leverage multiple of an option, which is important in the risk management. Similarly, the $gamma$ (resp. $vega$) is used to make the gamma-hedging (resp. vega-hedging) strategy and manage risk as well. We refer to \cite{Hull22} for more details.

\subsection{Greeks for simple Asian options}
\label{greeksimple}
Return to the definition of the simple Asian option Greek (\ref{deri}). If $f$ is Lipschitz continuous, then (\ref{deri}) makes sense, and we can interchange the expectation and differentiation as follows (see \cite{Glasserman04})
 \begin{equation}
\frac{\partial}{\partial \alpha}\mathbb{E}_{\mathbb{Q}}\left [ e^{-rT}f(\bar {\bm S}_T)\right ]=
e^{-rT}\mathbb{E}_{\mathbb{Q}}\left [  \frac{\partial f}{\partial x_l}(\bar {\bm S}_T) \frac{\partial \bar S_T^{(l)}}{\partial \alpha}   \right ] .
\label{formu_greek}
\end{equation}
If we take the partial derivative of $f$ directly in (\ref{formu_greek}) and obtain the relevant Greek formula, the well-known method is called the PW method (see \cite{Glasserman04}). In this case, the estimate of the Greek is dependent on the partial derivative of the payoff function $f$, which implies that this method might be failed when $f$ is not Lipschitz continuous.

In view of the above, we turn to another way, i.e., the Malliavin method, which turns (\ref{formu_greek}) to another form which excludes the partial derivative of $f$.

Since we only consider the expectation here, we identify the filtered probability space $(\Omega, {\cal F},\{{\cal F}_t\}_{0\le t\le T},{\mathbb Q})$ with the space induced by the classical Wiener space in Section \ref{MVcalculus}. 

As for the Malliavin derivatives we are going to use, we have for all $u\in [0,T]$ fixed (see \cite{Xu14,Nualart06}),
 \begin{equation*}
 \begin{aligned}
 &D^{(i)}_t\tilde W^{(j)}_u=\delta_{ij}{\bm 1}\{t\le u\},\quad \quad \quad \quad \quad \ \ i,j=1,\cdots,m;\\
 &{\bm D}_tS^{(j)}_u= (\sigma_{j1}\cdots,\sigma_{jm})'S_u^{(j)}{\bm 1}\{t\le u\} ,\quad j=1,\cdots,m ;\\
 &{\bm D}_t\bar S_u^{(j)}= (\sigma_{j1}\cdots,\sigma_{jm})'\frac{1}{u}\int_t^uS_{\nu}^{(j)}\mathrm{d}\nu ,\quad\hspace*{2pt} j=1,\cdots,m ,
  \end{aligned}
\end{equation*}
where $\bar{\bm S}_u:=\frac{1}{u}\int_0^u {\bm S}_\nu\mathrm{d}\nu$, ${\bm 1}\{\cdot\}$ is the indicator function, and $\delta_{ij}$ denotes the Kronecker delta function.

Applying Theorem \ref{L0} to the right hand of (\ref{formu_greek}) with $n=1$, ${\bm X}=\bar{{\bm S}}_T'$, ${ Y}= \frac{\partial \bar { S}^{(l)}_T}{\partial \alpha}$, $v_t=S_t^{(l)}$, ${\bm c}={\bm\sigma}'$ and $X_t^{(k)}=\frac{1}{T}\int_t^TS^{(k)}_u\mathrm{d}u$ for $k=1,\cdots,m$, we can obtain the Greek formula by Malliavin calculus as follows
\begin{equation}
Greek= \mathbb{E}_{\mathbb Q}\left[e^{-rT}f(\bar {\bm S}_T)   \delta \left( \frac{\partial \bar { S}^{(l)}_T}{\partial \alpha}\frac{2S_t^{(l)}}{T\left( \bar S_T^{(l)}\right)^2}{\bm\sigma}^{-1}_l \right)  \right]= : \mathbb{E}_{\mathbb Q}[e^{-rT}f(\bar {\bm S}_T)\cdot weight],
\label{MV_formu}
\end{equation}
where ${\bm\sigma}^{-1}_l$ denotes the $l$th column of the matrix ${\bm\sigma}^{-1}$, and $weight$ is some random variable independent of the payoff function $f$. Then Lemma \ref{multidelta} can be used to obtain the specific expression of (\ref{MV_formu}). 

Since (\ref{MV_formu}) excludes the partial derivative of $f$, we can extend it to the case that $f$ is not necessarily Lipschitz continuous by the following lemma.  

\begin{lemma}\label{L_O}
\textnormal{(Lemma 12.28,  \cite{Di Nunno09})}
Let $ \alpha\mapsto w_\alpha$ be a process with the parameter $\alpha$ (i.e., $\alpha=S_0^{(l)}$ or $\alpha=\sigma_{lp}$) such that $\alpha\mapsto \mathbb{E}_{\mathbb Q} [w_\alpha^2]<\infty$ is locally bounded. Further assume that
\begin{align}\label{oklemma}
\frac{\partial}{\partial \alpha}{\mathbb E}_{\mathbb Q}[e^{-rT}f(\bar {\bm S}_T)]=e^{-rT}\mathbb{E}_{\mathbb Q}
[f(\bar {\bm S}_T)w_\alpha]
\end{align}
is valid for all $f\in C_c^\infty({\mathbb R}^m)$ (i.e., $f$ is an infinitely differentiable function with compact support). Then (\ref{oklemma}) also holds for all locally integrable function $f: \mathbb{R}^m\rightarrow \mathbb {R}$ such that $f(\bar{\bm S}_T)\in L^2(\Omega,{\cal F},{\mathbb Q})$.
\end{lemma}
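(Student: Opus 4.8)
The plan is to establish the identity for an arbitrary admissible $f$ by approximating it, in a suitable topology, by functions in $C_c^\infty(\mathbb{R}^m)$ for which (\ref{oklemma}) is already assumed, and then passing to the limit on both sides; along the way the very existence of the $\alpha$-derivative on the left of (\ref{oklemma}) has to be recovered. For the approximation step, fix the base value of the parameter. I would first truncate: the functions $f_n:=\bigl((f\wedge n)\vee(-n)\bigr)\mathbf{1}\{\|x\|\le n\}$ are bounded with compact support, and since $f(\bar{\bm S}_T)\in L^2(\Omega,\mathcal{F},\mathbb{Q})$ and the law of $\bar{\bm S}_T$ is absolutely continuous with respect to Lebesgue measure on $\mathbb{R}^m$ (which is also what makes $f(\bar{\bm S}_T)$ well defined for merely locally integrable $f$), dominated convergence gives $f_n(\bar{\bm S}_T)\to f(\bar{\bm S}_T)$ in $L^2(\mathbb{Q})$. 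Mollifying each $f_n$ produces functions in $C_c^\infty(\mathbb{R}^m)$ that converge to $f_n$ in $L^1_{\mathrm{loc}}$ and boundedly, hence (again using absolute continuity of the law, to transfer a.e.-Lebesgue convergence to a.s.-$\mathbb{Q}$ convergence) converge to $f_n(\bar{\bm S}_T)$ in $L^2(\mathbb{Q})$; a diagonal argument then yields a sequence $g_n\in C_c^\infty(\mathbb{R}^m)$ with $g_n(\bar{\bm S}_T)\to f(\bar{\bm S}_T)$ in $L^2(\mathbb{Q})$.

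For the passage to the limit, the right-hand side is easy: by the Cauchy--Schwarz inequality, the convergence just obtained, and the hypothesis $\mathbb{E}_{\mathbb{Q}}[w_\alpha^2]<\infty$, one gets $e^{-rT}\mathbb{E}_{\mathbb{Q}}[g_n(\bar{\bm S}_T)w_\alpha]\to e^{-rT}\mathbb{E}_{\mathbb{Q}}[f(\bar{\bm S}_T)w_\alpha]$. For the left-hand side I would avoid differentiating term by term and instead use the integrated (fundamental-theorem-of-calculus) form: writing $\bar{\bm S}_T^{\alpha}$ for the average as a function of $\alpha$, for $\alpha_1<\alpha_2$ near the base value,
\[
\mathbb{E}_{\mathbb{Q}}\bigl[e^{-rT}g_n(\bar{\bm S}_T^{\alpha_2})\bigr]-\mathbb{E}_{\mathbb{Q}}\bigl[e^{-rT}g_n(\bar{\bm S}_T^{\alpha_1})\bigr]=\int_{\alpha_1}^{\alpha_2}e^{-rT}\mathbb{E}_{\mathbb{Q}}\bigl[g_n(\bar{\bm S}_T^{\alpha})w_\alpha\bigr]\,\mathrm{d}\alpha .
\]
Letting $n\to\infty$, the left side converges by $L^1(\mathbb{Q})$-convergence at the two endpoints and the right side by dominated convergence in $\alpha$, the integrand being dominated on the compact interval by $\sup_n\|g_n(\bar{\bm S}_T^{\alpha})\|_{L^2(\mathbb{Q})}\,\|w_\alpha\|_{L^2(\mathbb{Q})}$, which is finite by the hypothesis on $w_\alpha$. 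Hence $\alpha\mapsto\mathbb{E}_{\mathbb{Q}}[e^{-rT}f(\bar{\bm S}_T^{\alpha})]$ is an indefinite integral of $\alpha\mapsto e^{-rT}\mathbb{E}_{\mathbb{Q}}[f(\bar{\bm S}_T^{\alpha})w_\alpha]$; since the latter is continuous in $\alpha$ (being the locally uniform limit of the continuous functions $\alpha\mapsto e^{-rT}\mathbb{E}_{\mathbb{Q}}[g_n(\bar{\bm S}_T^{\alpha})w_\alpha]$), the fundamental theorem of calculus gives both the existence of the derivative and the formula (\ref{oklemma}).

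The main obstacle is precisely the uniformity in $\alpha$ invoked above: the sequence $g_n$ is built for one value of the parameter, so one must check that $g_n(\bar{\bm S}_T^{\alpha})\to f(\bar{\bm S}_T^{\alpha})$ in $L^2(\mathbb{Q})$, with the $L^2(\mathbb{Q})$-norms bounded uniformly in $n$, locally uniformly in $\alpha$. This reduces to controlling the densities of $\bar{\bm S}_T^{\alpha}$ locally uniformly in $\alpha$: for $\alpha=S_0^{(l)}$ it is transparent, since changing $S_0^{(l)}$ merely rescales one coordinate and the density of $\bar{\bm S}_T^{\alpha}$ is an explicit scaling of a fixed density; for $\alpha=\sigma_{lp}$ one uses the smooth dependence of the law on the volatility entries through the lognormal representation (\ref{qstock}) together with a change of variables (or uniform Malliavin nondegeneracy estimates valid on compact parameter sets). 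All remaining steps are routine combinations of Hölder's inequality and the dominated convergence theorem.
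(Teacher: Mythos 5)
The paper itself offers no proof of this lemma: it is quoted, with citation, from Lemma 12.28 of Di Nunno--{\O}ksendal--Proske, so there is no in-paper argument to compare yours against. Your overall strategy --- truncate and mollify to get $g_n\in C_c^\infty(\mathbb{R}^m)$ with $g_n(\bar{\bm S}_T)\to f(\bar{\bm S}_T)$ in $L^2(\mathbb{Q})$ (using absolute continuity of the law of $\bar{\bm S}_T$), pass to the limit on the right-hand side by Cauchy--Schwarz and the local bound on $\mathbb{E}_{\mathbb Q}[w_\alpha^2]$, and recover the derivative on the left from an integrated identity --- is the standard density argument for results of this type, and those parts are sound.

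There are, however, two gaps. First, your route back from the integrated identity to the derivative at the base point needs $\alpha\mapsto e^{-rT}\mathbb{E}_{\mathbb Q}[f(\bar{\bm S}_T^{\alpha})w_\alpha]$ to be continuous, which you deduce from continuity of the prelimit maps $\alpha\mapsto \mathbb{E}_{\mathbb Q}[g_n(\bar{\bm S}_T^{\alpha})w_\alpha]$; but the lemma assumes nothing about the dependence of $w_\alpha$ on $\alpha$ beyond local boundedness of second moments, so in the splitting $\mathbb{E}_{\mathbb Q}[(g_n(\bar{\bm S}_T^{\alpha})-g_n(\bar{\bm S}_T^{\alpha'}))w_\alpha]+\mathbb{E}_{\mathbb Q}[g_n(\bar{\bm S}_T^{\alpha'})(w_\alpha-w_{\alpha'})]$ the second term is uncontrolled, and your FTC argument then only yields differentiability for a.e.\ $\alpha$. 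This gap is easily repaired: keep exactly your two convergence facts and invoke the classical theorem that if $F_n\to F$ pointwise and $F_n'\to G$ locally uniformly, then $F$ is differentiable with $F'=G$; no continuity in $\alpha$ is needed. Second, and this is the real crux (which you flag but do not carry out), everything hinges on $g_n(\bar{\bm S}_T^{\alpha})\to f(\bar{\bm S}_T^{\alpha})$ in $L^2(\mathbb{Q})$ with bounds uniform over a compact $\alpha$-interval, i.e.\ on dominating the densities $p_\alpha$ of $\bar{\bm S}_T^{\alpha}$ by the base density locally uniformly in $\alpha$. Even the scaling case $\alpha=S_0^{(l)}$ is not ``transparent'': since $f$ is only square-integrable against the base law, you need a tail-comparability estimate of the form $c^{-1}p_{\alpha_0}(x/c)\le C\,p_{\alpha_0}(x)$ for $c$ near $1$, which requires an argument; and for $\alpha=\sigma_{lp}$ the density of the continuous-time average has no closed form, so the ``uniform nondegeneracy estimates'' you allude to are precisely the missing technical content. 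As written, the proposal is a correct skeleton with this load-bearing step left unproven.
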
 

\begin{remark}\label{remarklo}
We can also replace $f(\bar{\bm S}_T)$ by $f( {\bm S}_T,\bar{\bm S}_T)$ in Lemma \ref{L_O} in order to give a similar approximation of the Greeks for complex Asian options in Section \ref{greekcomplex}.
\end{remark}

We conclude the above analysis by the following theorem of the formulae for three common Greeks ($delta$, $gamma$ and $vega$) of simple Asian options.

\begin{theorem}\label{L1}
Let $f(\bar {\bm S}_T) $ be the payoff of the simple Asian option. If  $f $ is locally integrable and $f(\bar{\bm S}_T)\in L^2(\Omega,{\cal F},{\mathbb Q})$, then the formulae for $delta$, $gamma$ and $vega$ are given by
\begin{align*}
&delta: \frac{\partial V}{\partial S_0^{(l)}}=\mathbb{E}_{\mathbb Q}\left[   f(\bar {\bm S}_T)\frac{2e^{-rT}}{S_0^{(l)}}  \tau_l\right];\\
& gamma: \frac{\partial^2 V}{\partial S_0^{(p)}{\partial S_0^{(l)}}}= \mathbb{E}_{\mathbb Q}\Bigg\{ f(\bar {\bm S}_T) \frac{2e^{-rT}}{S_0^{(l)}} \Bigg[   \frac{2}{TS_0^{(p)}\bar S_T^{(p)}} \Bigg(    \tau_l\sum_{i=1}^m\int_0^T\sigma^{-1}_{ip}S_t^{(p)}\mathrm{d}\tilde W_t^{(i)} 
 \\
& \hspace{11em}  -\frac{\delta_{lp} }{ \left(T\bar S_T^{(l)}\right)^2}\sum_{i=1}^m \sigma^{-1}_{il}H_{ilp} +\frac{\tau_l T\bar S_T^{(p)}}{2}   - \frac{\sigma^{-1}_{lp}}{ T\bar S_T^{(l)}}   \sum_{i=1}^m\sigma^{-1}_{il}  \int_0^TS_t^{(l)}S_t^{(p)}\mathrm{d}t   \Bigg)
 -\frac{\delta_{lp}}{S_0^{(l)}}\tau_l \Bigg]  \Bigg\};\\
 &vega: \frac{\partial V}{\partial \sigma_{lp}}=\mathbb{E}_{\mathbb Q} \Bigg [   f(\bar {\bm S}_T) \frac{2e^{-rT}}{\Big(T{{\bar S}^{(i)}_T}\Big)^2} \Big(    \int_0^T S_t^{(l)}g_t\mathrm{d}t    \Big( \tau_l+\frac{1}{2}  \Big) T\bar{S}_T^{(l)}  -\int_0^TS_u^{(l)}  \int_u^T \left(  S_t^{(l)}g_t+S_t^{(l)}\sigma^{-1}_{pl}\right)\mathrm{d}t\mathrm{d}u   \Big) \Bigg],
\end{align*}
\end{theorem}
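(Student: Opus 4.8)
The proof rests on the identity~(\ref{MV_formu}): Theorem~\ref{L0} already rewrites each first--order Greek of a Lipschitz--payoff simple Asian option as $\mathbb{E}_{\mathbb{Q}}\!\left[e^{-rT}f(\bar{\bm S}_T)\cdot weight\right]$ with $weight$ an explicit Skorohod integral. My plan is thus: (1)~check that the hypotheses of Theorem~\ref{L0} hold in the multi--asset Black--Scholes model; (2)~invoke Lemma~\ref{L_O} to pass from Lipschitz $f$ (for which the interchange~(\ref{formu_greek}) is legitimate, see \cite{Glasserman04}) to every locally integrable $f$ with $f(\bar{\bm S}_T)\in L^2(\Omega,\mathcal F,\mathbb{Q})$; (3)~evaluate the Skorohod integrals explicitly via the multiplication formula of Lemma~\ref{multidelta}; and, for $gamma$, (4)~differentiate the $delta$ formula once more and apply a second integration by parts before evaluating again.

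For step~(1): condition~(\ref{zhengding}) makes ${\bm a}={\bm\sigma}{\bm\sigma}'$, hence ${\bm\sigma}$ and ${\bm c}={\bm\sigma}'$, nonsingular; the Malliavin derivatives recorded before the theorem give $S_t^{(i)},\bar S_T^{(i)}\in D^{1,2}(\Omega)$, yielding hypothesis~(a) with $X_t^{(k)}=\tfrac1T\int_t^T S_u^{(k)}\mathrm{d}u$ and ${\bm c}_k$ the $k$th column of ${\bm\sigma}'$; the law of $\bar{\bm S}_T$ is absolutely continuous on $\mathbb{R}^m$ by the standard nondegeneracy criterion for Malliavin--differentiable vectors (see \cite{Nualart06}), since its Malliavin covariance matrix is the Schur product of the positive--definite ${\bm a}$ with an a.s.\ nonsingular Gram matrix---hypothesis~(b); and each scalar $\zeta^{(k)}=\int_0^T\big(\tfrac1T\int_t^T S_u^{(k)}\mathrm{d}u\big)S_t^{(l)}\mathrm{d}t>0$ a.s.\ (it equals $\tfrac T2(\bar S_T^{(l)})^2$ when $k=l$), which together with the finite moments of every order of the candidate weights---built from the $S^{(i)}$, their running integrals and reciprocals of the latter---secures hypothesis~(c). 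Step~(2) is then a direct application of Lemma~\ref{L_O} (and of Remark~\ref{remarklo} for the complex case), leaving only the explicit computations.

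For $delta$ ($\alpha=S_0^{(l)}$), proportionality of $S^{(l)}$ to $S_0^{(l)}$ gives $\partial\bar S_T^{(l)}/\partial S_0^{(l)}=\bar S_T^{(l)}/S_0^{(l)}$, so the weight in~(\ref{MV_formu}) collapses to $\tfrac2{S_0^{(l)}}\tilde\delta\big(\tfrac{S_t^{(l)}}{T\bar S_T^{(l)}}{\bm\sigma}^{-1}_l\big)$; applying Lemma~\ref{multidelta} with $X=1/(T\bar S_T^{(l)})$ and ${\bm v}_t=S_t^{(l)}{\bm\sigma}^{-1}_l$, using $\sum_i\sigma_{li}\sigma^{-1}_{il}=({\bm\sigma}{\bm\sigma}^{-1})_{ll}=1$ and $\int_0^T S_t^{(l)}\big(\int_t^T S_\nu^{(l)}\mathrm{d}\nu\big)\mathrm{d}t=\tfrac12\big(\int_0^T S_u^{(l)}\mathrm{d}u\big)^2$, yields $\tilde\delta(X{\bm v})=\tfrac1{T\bar S_T^{(l)}}\sum_i\sigma^{-1}_{il}\int_0^T S_t^{(l)}\mathrm{d}\tilde W_t^{(i)}+\tfrac12=:\tau$, which is the stated $delta$. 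For $vega$ ($\alpha=\sigma_{lp}$), only $S^{(l)}$ depends on $\sigma_{lp}$ and differentiating~(\ref{qstock}) gives $\partial S_t^{(l)}/\partial\sigma_{lp}=S_t^{(l)}g_t$ with $g_t=\tilde W_t^{(p)}-\sigma_{lp}t$, so $\partial\bar S_T^{(l)}/\partial\sigma_{lp}=\tfrac1T\int_0^T S_t^{(l)}g_t\,\mathrm{d}t$; substituting this into~(\ref{MV_formu}) and expanding again by Lemma~\ref{multidelta}---the scalar factor now carrying the extra running integral $\int_0^T S_u^{(l)}g_u\,\mathrm{d}u$, whose Malliavin derivative produces the $\sigma^{-1}_{pl}$ term via $\sum_i\delta_{ip}\sigma^{-1}_{il}=\sigma^{-1}_{pl}$---gives, after collecting terms, the claimed $vega$.

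The main obstacle is $gamma$. Starting from $delta=\mathbb{E}_{\mathbb{Q}}[f(\bar{\bm S}_T)\tfrac{2e^{-rT}}{S_0^{(l)}}\tau]$, I differentiate in $S_0^{(p)}$ (a further Lemma~\ref{L_O}--type argument legitimises the interchange). The product rule gives three contributions: the $S_0^{(p)}$--derivative of the prefactor $2e^{-rT}/S_0^{(l)}$, which produces the $-\tfrac{\delta_{lp}}{S_0^{(l)}}\tau$ term; the derivative of $\tau$, which vanishes because $\tau$ depends on $S_0$ only through $S^{(l)}$, and on that only via the scale--invariant ratio $S_t^{(l)}/\bar S_T^{(l)}$; and $\partial f(\bar{\bm S}_T)/\partial S_0^{(p)}=\tfrac{\partial f}{\partial x_p}(\bar{\bm S}_T)\,\bar S_T^{(p)}/S_0^{(p)}$, which still carries a derivative of $f$ and must be removed by a second application of Theorem~\ref{L0} (index $p$), now with the square--integrable but non--Lipschitz weight proportional to $\tau\bar S_T^{(p)}$. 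The ensuing Skorohod integral is that of a product of $\tau$ with $\tfrac{S_t^{(p)}}{T\bar S_T^{(p)}}{\bm\sigma}^{-1}_p$; expanding it by Lemma~\ref{multidelta} requires $D_t^{(i)}\tau$, and since $\tau$ contains the It\^o integrals $\int_0^T S_s^{(l)}\mathrm{d}\tilde W_s^{(k)}$, its Malliavin derivative has several pieces. Contracting them against ${\bm\sigma}^{-1}_p$ produces the factors $\sum_i\sigma_{li}\sigma^{-1}_{ip}=\delta_{lp}$---the origin of the Kronecker deltas---and the nested time integrals assembled into $H_{ilp}$, while the $\delta_{ik}$--piece of $D_t^{(i)}\int_0^T S_s^{(l)}\mathrm{d}\tilde W_s^{(k)}$ yields the $\int_0^T S_t^{(l)}S_t^{(p)}\mathrm{d}t$ term. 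The real work is bookkeeping these contributions---tracking which survive when $p=l$ versus $p\neq l$ and collapsing the iterated integrals by $\int_0^T g(t)\int_t^T g(s)\,\mathrm{d}s\,\mathrm{d}t=\tfrac12\big(\int_0^T g(s)\,\mathrm{d}s\big)^2$ and its variants; once the hypotheses of Theorem~\ref{L0} and Lemma~\ref{multidelta} are secured, the rest is routine.
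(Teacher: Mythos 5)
Your proposal is correct and follows essentially the same route the paper intends: apply Theorem \ref{L0} with the stated choices to get (\ref{MV_formu}), evaluate the Skorohod integrals with Lemma \ref{multidelta} (your $delta$ and $vega$ expansions reproduce the stated formulae, including the $\tau$ and $\sigma^{-1}_{pl}$ terms), extend to locally integrable $f$ via Lemma \ref{L_O}, and obtain $gamma$ by differentiating the $delta$ representation in $S_0^{(p)}$ and integrating by parts a second time in direction $p$, which is exactly how the $\delta_{lp}$, $H_{ilp}$ and $\int_0^T S_t^{(l)}S_t^{(p)}\mathrm{d}t$ terms arise. No essential gap beyond routine bookkeeping.
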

where $g_t:=\tilde W_t^{(p)}-\sigma_{lp}t$, $\tau_l:=\frac{\sum_{i=1}^m\int_0^T\sigma^{-1}_{il}S_t^{(l)}\mathrm{d}\tilde W_t^{(i)}}{T\bar{S}_T^{(l)}}+\frac{1}{2}$, and
\begin{align*}
H_{ilp}:=T\bar S_T^{(l)}\int_0^TS_u^{(p)}\int_u^TS_t^{(l)}\mathrm{d}\tilde W_t^{(i)}\mathrm{d}u-\int_0^TS_t^{(l)}\mathrm{d}\tilde W_t^{(i)} \int_0^TS_u^{(p)}\int_u^TS_t^{(l)}\mathrm{d}t\mathrm{d}u.
\end{align*}

When $m=1$, from Theorem \ref{L1}, we obtain formulae for $delta$, $gamma$ and $vega$ in one dimensional case in the following corollary.

\begin{corollary} \label{L1C}
Let $m=1$ and $f(\bar { S}_T) $ be the payoff of the simple Asian option. If  $f $ is locally integrable and $f(\bar{ S}_T)\in L^2(\Omega,{\cal F},{\mathbb Q})$, then the formulae for $delta$, $gamma$ and $vega$ are given by
\begin{align*}
&delta: \frac{\partial V}{\partial S_0}=\mathbb{E}_{\mathbb Q}\left[   f(\bar S_T)\frac{2e^{-rT}}{S_0 \sigma^2}\left(   \frac{S_T-S_0}{T\bar S_T}-\omega \right)  \right];\\
& gamma: \frac{\partial^2 V}{\partial S_0^2}=\mathbb{E}_{\mathbb Q}\Bigg[ f(\bar S_T) \frac{4e^{-rT}}{\sigma^4 S_0^2T^2\bar{S}_T^2} \Bigg( S_T^2-2S_TS_0+S_0^2  +\omega rT^2\bar{S}_T^2-2rTS_T\bar{S}_T+2\omega TS_0\bar{S}_T  \Bigg) \Bigg];\\
& vega:\frac{\partial V}{\partial \sigma}=\mathbb{E}_{\mathbb Q}\Bigg[   f(\bar S_T) \frac{2e^{-rT}}{\sigma^2 T^2 \bar{S}_T^2}    \Bigg(    \left(S_T-S_0-\left(r-\sigma^2\right)T\bar{S}_T\right)\int_0^TS_tg_t\mathrm{d}t    - \sigma^2\int_0^TS_u\int_u^T S_t g_t\mathrm{d}t\mathrm{d}u-\frac{\sigma T^2}{2}\bar{S}_T^2    \Bigg)   \Bigg].
\end{align*} 
\end{corollary}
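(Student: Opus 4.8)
The plan is to obtain Corollary~\ref{L1C} by specializing Theorem~\ref{L1} to the one-asset case $m=1$ with $l=p=1$ and then rewriting the three resulting expectations in terms of $S_T$, $\bar S_T$ and $S_0$ alone. Two elementary facts drive the simplification. First, applying It\^o's formula to $\mathrm{d}S_t=rS_t\,\mathrm{d}t+\sigma S_t\,\mathrm{d}\tilde W_t$ gives $\sigma\int_0^T S_t\,\mathrm{d}\tilde W_t=S_T-S_0-r\,T\bar S_T$, and more generally on any subinterval $\sigma\int_u^T S_t\,\mathrm{d}\tilde W_t=S_T-S_u-r\int_u^T S_t\,\mathrm{d}t$. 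Second, splitting $\big(\int_0^T S_t\,\mathrm{d}t\big)^2=\int_0^T\!\int_0^T S_uS_t\,\mathrm{d}t\,\mathrm{d}u$ along $\{t\ge u\}$ and $\{t<u\}$ and relabelling yields the symmetrization $\int_0^T S_u\int_u^T S_t\,\mathrm{d}t\,\mathrm{d}u=\tfrac12\big(T\bar S_T\big)^2$. I will also use repeatedly the constant identities $\omega=r-\tfrac12\sigma^2$, hence $r-\omega=\tfrac12\sigma^2$, $2\omega+\sigma^2=2r$, $\omega+\tfrac12\sigma^2=r$ and $\omega-\tfrac12\sigma^2=r-\sigma^2$. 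Throughout, set $A:=T\bar S_T=\int_0^T S_t\,\mathrm{d}t$.

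For $delta$, with $m=1$ one has $\delta_{lp}=1$ and $T S_0^{(l)}\bar S_T^{(l)}=S_0A$; the first identity turns $\tau$ into $\frac{1}{\sigma^2}\big(\frac{S_T-S_0}{A}-\omega\big)$, and substituting this into the $delta$ line of Theorem~\ref{L1} gives the stated formula immediately. For $vega$, note $g_t=\tilde W_t-\sigma t$ is unchanged and $\sigma^{-1}_{pl}=1/\sigma$; using $\omega-\tfrac12\sigma^2=r-\sigma^2$ one gets $(\tau+\tfrac12)T\bar S_T=\frac{1}{\sigma^2}\big(S_T-S_0-(r-\sigma^2)A\big)$, while the symmetrization identity gives $\int_0^T S_u\int_u^T S_t\sigma^{-1}\,\mathrm{d}t\,\mathrm{d}u=\frac{A^2}{2\sigma}$; collecting everything over the common prefactor $\frac{2e^{-rT}}{\sigma^2T^2\bar S_T^2}$ reproduces the claimed $vega$.

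The bookkeeping-heavy case is $gamma$. With $l=p=1$ every $\sigma^{-1}$ is $1/\sigma$ and $\delta_{lp}=1$. I will first evaluate $H_{111}$ via $\int_u^T S_t\,\mathrm{d}\tilde W_t=\frac1\sigma\big(S_T-S_u-r\int_u^T S_t\,\mathrm{d}t\big)$ and the symmetrization identity, obtaining $H_{111}=\frac{A}{\sigma}\big(S_TA-Q-\tfrac r2A^2\big)-\frac{A^2}{2\sigma}\big(S_T-S_0-rA\big)$ with $Q:=\int_0^T S_t^2\,\mathrm{d}t$; crucially the $Q$-contribution from $-\frac{1}{(T\bar S_T)^2}\sum_i\sigma^{-1}_{i1}H_{i11}$ cancels exactly against the term $-\frac{\sigma^{-1}_{lp}}{T\bar S_T^{(l)}}\sum_i\sigma^{-1}_{il}\int_0^T S_t^{(l)}S_t^{(p)}\,\mathrm{d}t=\frac{Q}{\sigma^2A}$. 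Next I substitute $\tau\sum_i\int_0^T\sigma^{-1}_{i1}S_t\,\mathrm{d}\tilde W_t^{(i)}=\frac{1}{\sigma^4}\big(\frac{S_T-S_0}{A}-\omega\big)\big(S_T-S_0-rA\big)$ and $\frac{\tau T\bar S_T}{2}=\frac{1}{2\sigma^2}(S_T-S_0-\omega A)$, after which the whole bracket in Theorem~\ref{L1} collapses to a rational expression in $S_T-S_0$, $A$ and $S_0$; writing $S_T-S_0=uA+\omega A$ with $u:=\frac{S_T-S_0}{A}-\omega$ and using $2\omega+\sigma^2=2r$, $\omega+\tfrac12\sigma^2=r$, $2r-\sigma^2=2\omega$ converts the numerator into exactly $(S_T-S_0)^2+\omega rT^2\bar S_T^2-2rTS_T\bar S_T+2\omega TS_0\bar S_T$ over $\sigma^4S_0^2T^2\bar S_T^2$, which is the claimed $gamma$. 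The one real obstacle is keeping this last algebraic collapse organized so that the cancellations (especially the disappearance of $Q$ and of the standalone linear-in-$S_T,S_0$ terms not forced by the Black--Scholes identities) are transparent; all other steps are routine specialization of Theorem~\ref{L1}.
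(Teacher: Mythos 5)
Your proposal is correct and follows exactly the route the paper intends: Corollary \ref{L1C} is obtained by specializing Theorem \ref{L1} to $m=1$ and simplifying with the It\^o identity $\sigma\int_0^T S_t\,\mathrm{d}\tilde W_t=S_T-S_0-rT\bar S_T$ and the symmetrization $\int_0^T S_u\int_u^T S_t\,\mathrm{d}t\,\mathrm{d}u=\tfrac12(T\bar S_T)^2$, and your algebra (including the cancellation of the $\int_0^T S_t^2\,\mathrm{d}t$ terms in $gamma$ and the use of $2\omega+\sigma^2=2r$) checks out against the stated formulae. The paper omits these computational details, but your argument is the same specialization it relies on.
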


The formulae for $delta$ and $gamma$ in Theorem \ref{L1} can also be found in \cite{Xu14}. However, our method is more concise and general since all the results are based on a general integration by parts formula in Theorem \ref{L0} rather than the complicated calculation for each specific example in \cite{Xu14}. Moreover, Theorem \ref{L0} could also be allowed to calculate the Greeks formulae for complex Asian options in the next subsection. 
 
 \subsection{Greeks for complex Asian options}
\label{greekcomplex}
Now we consider the payoff of the complex Asian option $f({\bm S}_T,\bar{\bm S}_T)$, where the payoff function $f: {\mathbb R}^{2m}\rightarrow {\mathbb R}$ is a nonnegative measurable function such that $f({\bm S}_T,\bar{\bm S}_T)\in L^1(\Omega,{\cal F},{\mathbb Q})$.

Similar to the case of the simple Asian option, if $f$ is Lipschitz continuous, we can obtain the Greek of the complex Asian option as follows
\begin{equation}\begin{aligned}
Greek&=\frac{\partial}{\partial \alpha}{\mathbb E}_{\mathbb Q}[e^{-rT}f({\bm S}_T,\bar{\bm S}_T)]\\
&=e^{-rT}{\mathbb E}_{\mathbb Q}\left[ \frac{\partial f}{\partial x_{1l}}({\bm S}_T,\bar{\bm S}_T)\frac{\partial S^{(l)}_T}{\partial \alpha}+\frac{\partial f}{\partial x_{2l}}({\bm S}_T,\bar{\bm S}_T)\frac{\partial \bar{S}_T^{(l)}}{\partial \alpha} \right].
\label{greekofcomp}
\end{aligned}\end{equation}

However, when dealing with the complex Asian option, the choice of ${\bm v}_t $ in Theorem \ref{L0} is important since a good choice can simplify our formulae greatly. We introduce the set $\Lambda$ defined by
\begin{equation*}
\Lambda:=\{  a\in  L^2([0,T]) :\int_0^Ta_t\mathrm{d}t=0, \int_0^Ta_t\int_0^tS_u^{(k)}\mathrm{d}u\mathrm{d}t\neq 0,\forall k=1,\cdots,m \}.
\end{equation*}

 Let $a\in\Lambda$. Then applying Theorem \ref{L0} to the right hand of (\ref{greekofcomp}) with $n=2$, ${\bm X}_1={\bm S}_T'$, ${\bm X}_2=\bar{\bm S}_T'$, ${\bm Y}=(\frac{\partial S_T^{(l)}}{\partial \alpha}, \frac{\partial \bar{S}_T^{(l)}}{\partial \alpha})$, ${\bm v}_t=(a_t,S_t^{(l)})$, ${\bm c}={\bm \sigma}'$, $X_t^{(1k)}=S_T^{(k)}$ and $X_t^{(2k)}=\frac{1}{T}\int_t^TS_u^{(k)}\mathrm{d}u$ for $k=1,\cdots,m$, we can obtain the matrix
 \begin{equation}
 {\bm\zeta}=
 \left(
      \begin{array}{cc}
       0 &-\frac{1}{T}\int_0^Ta_t\int_0^tS_u^{(l)}\mathrm{d}u\mathrm{d}t \\
       TS_T^{(l)}\bar S_T^{(l)} &\frac{T}{2}\left(\bar S_T^{(l)}\right)^2
      \end{array}
      \right)   ,      
 \end{equation}
 and the Greek formula
\begin{equation}\label{MV_formu2}\begin{aligned}
& Greek \\
=& \mathbb{E}_{\mathbb Q}\Bigg[e^{-rT}f({\bm S}_T,\bar {\bm S}_T)   \delta \Bigg(\Bigg( \frac{\frac{T}{2}\bar S_T^{(l)}\frac{\partial S_T^{(l)}}{\partial \alpha}a_t-TS_T^{(l)}\frac{\partial\bar S_T^{(l)}}{\partial \alpha}a_t}{S_T^{(l)}\int_0^Ta_\nu\int_0^\nu S_u^{(l)}\mathrm{d}u\mathrm{d}\nu} 
+\frac{\partial S_T^{(l)}}{\partial \alpha}\frac{S_t^{(l)}}{TS_T^{(l)}\bar S_T^{(l)}}  \Bigg){\bm\sigma}^{-1}_l\Bigg)  \Bigg]\\
=&: \mathbb{E}_{\mathbb Q}[e^{-rT}f({\bm S}_T,\bar {\bm S}_T)\cdot weight].
\end{aligned}\end{equation}
By Lemma \ref{L_O} and Remark \ref{remarklo}, the formula (\ref{MV_formu2}) still holds when $f$ is locally integrable and $f({\bm S}_T,\bar{\bm S}_T)\in L^2(\Omega,{\cal F},{\mathbb Q})$. We conclude the above analysis by the following theorem of the formulae for the Greeks for complex Asian options.

\begin{theorem}\label{L12}
Let $f({\bm S}_T,\bar {\bm S}_T) $ be the payoff of the complex Asian option, and $a\in \Lambda$. If $f $ is locally integrable and $f({\bm S}_T,\bar{\bm S}_T)\in L^2(\Omega,{\cal F},{\mathbb Q})$, then the formulae for $delta$, $gamma$ and $vega$ are given by
\begin{align*}
&delta: \frac{\partial V}{\partial S_0^{(l)}}=\mathbb{E}_{\mathbb Q}\Bigg[  f({\bm S}_T,\bar{\bm S}_T)\frac{e^{-rT}}{S_0^{(l)}} \Bigg( \frac{1}{T\bar S_T^{(l)}} \sum_{i=1}^m\int_0^T \sigma^{-1}_{il} S_t^{(l)} \mathrm{d}\tilde W^{(i)}_t  +  \frac{\int_0^Ta_t\int_t^TS_u^{(l)}\mathrm{d}u\mathrm{d}t }{2h_T}   \\
&\hspace{8em}+\frac{1}{2} - \frac{T\bar S^{(l)}_T\sum_{i=1}^m\int_0^T\sigma^{-1}_{il}a_t\mathrm{d}\tilde W^{(i)}_t}{2h_T}    -\frac{T\bar S_T^{(l)}\int_0^Ta_t\int_t^Ta_\nu \int_t^\nu S_u^{(l)}\mathrm{d}u\mathrm{d}\nu\mathrm{d}t}{2h_T^2} \Bigg)   \Bigg] ;\\
& gamma: \frac{\partial^2 V}{\partial S_0^{(p)}{\partial S_0^{(l)}}}= \mathbb{E}_{\mathbb Q} \Bigg[f({\bm S}_T,\bar {\bm S}_T)  \frac{e^{-rT}}{S_0^{(p)}S_0^{(l)}}  \delta \Bigg({\bm\sigma}^{-1}_l\Bigg(  \frac{\frac{T}{2}\bar S_T^{(l)}w_1a_t-TS_T^{(l)}w_2a_t}{S_T^{(l)}h_T} +  \frac{\frac{T}{2}\bar S_T^{(l)}w_1a_t-TS_T^{(l)}w_2a_t}{S_T^{(l)}h_T}\Bigg)\Bigg)\Bigg];\\
& vega:  \frac{\partial V}{\partial \sigma_{lp}}=\mathbb{E}_{\mathbb Q} \Bigg[  f({\bm S}_T,\bar{\bm S}_T) e^{-rT}\Bigg(     
 \frac{T\bar S_T^{(l)}g_T}{2h_T} \sum_{i=1}^m\int_0^T\sigma^{-1}_{il}a_t\mathrm{d}\tilde W_t^{(i)}  +\frac{ g_T}{ 2} -\sigma_{pl}^{-1} \\
 &\hspace{8em}     +\frac{T\bar S_T^{(l)}g_T}{2 h_T^2}\int_0^T a_t \int_t^Ta_\nu\int_t^\nu S_u^{(l)}\mathrm{d}u   \mathrm{d}\nu\mathrm{d}t  -\frac{1}{2h_T}\int_0^T\Big(   g_T a_t\int_t^T S_u^{(l)}\mathrm{d}u+T\bar S_T^{(l)}   \sigma^{-1}_{pl} a_t \Big)   \mathrm{d}t \\
 &\hspace{8em}  - \frac{S_T^{(l)}\int_0^TS_t^{(l)}g_t\mathrm{d}t}{S_T^{(l)}h_T}\sum_{i=1}^m\int_0^T \sigma^{-1}_{il}a_t \mathrm{d}\tilde W^{(i)}_t-\frac{\int_0^TS_t^{(l)}g_t\mathrm{d}t}{h_T^2 }
\int_0^Ta_t\int_t^Ta_\nu\int_t^\nu S_u^{(l)}\mathrm{d}u\mathrm{d}\nu\mathrm{d}t\\
&\hspace{8em}   +\frac{1}{h_T} \int_0^T a_t     \int_t^T \Big( S_u^{(l)}g_u   +\sigma^{-1}_{pl}   S_u^{(l)}   \Big)\mathrm{d}u   \mathrm{d}t  +\frac{g_T}{T\bar S_T^{(l)}}\sum_{i=1}^m\int_0^T\sigma^{-1}_{il} S_t^{(l)}\mathrm{d}\tilde W^{(i)}_t 
  \Bigg)\Bigg] ,
\end{align*}
where $h_t:=\int_0^t a_\nu\int_0^\nu S_u^{(l)}\mathrm{d}u\mathrm{d}v$, $\tilde h_t:=\int_0^t a_\nu\int_0^\nu S_u^{(p)}\mathrm{d}u\mathrm{d}v$, $g_t:=\tilde W_t^{(p)}-\sigma_{lp}t$, and 
\begin{align*}
w_1=&-\frac{T\bar S_T^{(p)}S_T^{(l)}}{2\tilde h_T}\sum_{i=1}^m\int_0^T\sigma^{-1}_{ip}a_t\mathrm{d}\tilde W^{(i)}_t-\delta_{lp}S_T^{(l)}+\frac{1}{2}+\frac{S_T^{(l)}}{2\tilde h_T}\int_0^T a_t\int_t^TS_u^{(p)}\mathrm{d}u\mathrm{d}t  -\frac{T\bar S_T^{(p)}S_T^{(l)}}{2\tilde h_T^2}\int_0^T a_t\int_t^Ta_\nu  \int_t^\nu S_u^{(p)}\mathrm{d}u\mathrm{d}\nu\mathrm{d}t  
\\&+\frac{S_T^{(l)}}{T\bar S_T^{(p)}}\sum_{i=1}^m\int_0^T\sigma^{-1}_{ip}S_t^{(p)}\mathrm{d}\tilde W^{(i)}_t;\\
w_2=&-\frac{T\bar S_T^{(p)}\bar S_T^{(l)}}{2\tilde h_T}\sum_{i=1}^m\int_0^T\sigma^{-1}_{ip}a_t\mathrm{d}\tilde W_t^{(i)}+\frac{\bar S_T^{(l)}}{2\tilde h_T}\int_0^Ta_t\int_t^TS_u^{(p)}\mathrm{d}u\mathrm{d}t-\frac{\delta_{lp} \bar S_T^{(p)}}{2}+\frac{\bar S_T^{(l)}}{2}  +\delta_{lp}\frac{\bar S_T^{(p)}}{2\tilde h_T}\int_0^Ta_t\int_t^TS_u^{(l)}\mathrm{d}u\mathrm{d}t\\
&   -\frac{T\bar S_T^{(p)}\bar S_T^{(l)}}{2\tilde h_T^2}\int_0^Ta_t\int_t^T a_\nu\int_t^\nu S_u^{(p)}\mathrm{d}u\mathrm{d}\nu\mathrm{d}t +\frac{\bar S_T^{(l)}}{T\bar S_T^{(p)}}\sum_{i=1}^m\int_0^T\sigma^{-1}_{ip}S_t^{(p)}\mathrm{d}\tilde W_t^{(i)}.
\end{align*}
\end{theorem}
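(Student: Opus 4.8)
\emph{Overview.} The plan is to specialize the general integration by parts formula of Theorem~\ref{L0} to the complex Asian payoff exactly as in the derivation of~(\ref{MV_formu2}), and then convert the resulting Skorohod integral into an explicit expression by repeatedly applying the multiplication formula of Lemma~\ref{multidelta} together with the chain rule of Proposition~\ref{chainrule}. The three Greeks $delta$ and $vega$ come from a single application of this procedure, while $gamma$ requires a second integration by parts.

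\emph{Checking the hypotheses and obtaining~(\ref{MV_formu2}).} With $n=2$, ${\bm X}_1={\bm S}_T'$, ${\bm X}_2=\bar{\bm S}_T'$, ${\bm v}_t=(a_t,S_t^{(l)})$, ${\bm c}={\bm\sigma}'$, $X_t^{(1k)}=S_T^{(k)}$ and $X_t^{(2k)}=\frac1T\int_t^TS_u^{(k)}\mathrm{d}u$, condition~(a) of Theorem~\ref{L0} follows from the Malliavin derivatives $\bm D_tS_u^{(j)}$ and $\bm D_t\bar S_u^{(j)}$ recorded in Section~\ref{greeksimple}: both $\bm D_t{\bm X}_1$ and $\bm D_t{\bm X}_2$ carry the common coefficient matrix $\bm\sigma'$, which is exactly the structural point of Remark~\ref{rmrm2}, and $\bm\sigma'$ is invertible by~(\ref{zhengding}). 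Condition~(b), the absolute continuity of the law of $({\bm S}_T,\bar{\bm S}_T)$ on $\mathbb{R}^{2m}$, follows from the nondegeneracy of $\bm\sigma$ (e.g.\ by conditioning on the lognormal vector ${\bm S}_T$ and noting that $\bar{\bm S}_T$ then has a conditional density, or via a Bouleau--Hirsch type criterion). For~(c), a direct computation of the four inner products $((X_t^{(jl)},v_t^{(i)})_{L^2([0,T])})$ gives precisely the matrix $\bm\zeta$ displayed before~(\ref{MV_formu2}); since $a\in\Lambda$ forces $\int_0^Ta_t\,\mathrm{d}t=0$ and $\int_0^Ta_t\int_0^tS_u^{(l)}\,\mathrm{d}u\,\mathrm{d}t\neq0$, and since $S_T^{(l)},\bar S_T^{(l)}>0$ a.s., $\bm\zeta$ is invertible a.s.\ with $\det\bm\zeta=S_T^{(l)}\bar S_T^{(l)}\int_0^Ta_t\int_0^tS_u^{(l)}\,\mathrm{d}u\,\mathrm{d}t$; the requirement that the argument of $\tilde\delta$ belong to $\mathrm{Dom}\,\tilde\delta$ is checked by the standard moment estimates for geometric Brownian motion. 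This gives~(\ref{MV_formu2}) for Lipschitz $f$, and Lemma~\ref{L_O} together with Remark~\ref{remarklo} then removes the Lipschitz hypothesis, yielding~(\ref{MV_formu2}) for every locally integrable $f$ with $f({\bm S}_T,\bar{\bm S}_T)\in L^2(\Omega,{\cal F},\mathbb{Q})$.

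\emph{The three Greeks.} For $\alpha=S_0^{(l)}$ one has $\partial S_T^{(l)}/\partial S_0^{(l)}=S_T^{(l)}/S_0^{(l)}$ and $\partial\bar S_T^{(l)}/\partial S_0^{(l)}=\bar S_T^{(l)}/S_0^{(l)}$; for $\alpha=\sigma_{lp}$, differentiating~(\ref{qstock}) gives $\partial S_T^{(l)}/\partial\sigma_{lp}=S_T^{(l)}g_T$ and $\partial\bar S_T^{(l)}/\partial\sigma_{lp}=\frac1T\int_0^TS_t^{(l)}g_t\,\mathrm{d}t$, with $g_t=\tilde W_t^{(p)}-\sigma_{lp}t$. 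Plugging these into~(\ref{MV_formu2}) and expanding $\tilde\delta$ by Lemma~\ref{multidelta}, which replaces $\tilde\delta(X{\bm v})$ by $X\tilde\delta({\bm v})-\int_0^T\sum_iD_t^{(i)}X\,v_t^{(i)}\,\mathrm{d}t$ with the adapted pieces of $\tilde\delta$ becoming It\^o integrals against $\tilde{\bm W}$, and then computing every Malliavin derivative with Proposition~\ref{chainrule}, one obtains the stated formulae for $delta$ and $vega$ after collecting terms; the scalar ``trace'' corrections $\int_0^T\sum_iD_t^{(i)}X\,v_t^{(i)}\,\mathrm{d}t$ are what produce the non-stochastic-integral pieces such as $\tfrac12$, $-\sigma_{pl}^{-1}$ and $g_T/2$. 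For $gamma$ I would write $delta=\mathbb{E}_{\mathbb{Q}}[e^{-rT}f({\bm S}_T,\bar{\bm S}_T)\,w]$ with $w$ the delta-weight just obtained, differentiate in $S_0^{(p)}$, and split into the term where the derivative hits $f$ and the term where it hits $w$; the latter is already in the required form, while the former is treated by a second application of Theorem~\ref{L0} (now with $\varphi$ replaced by the product of $\varphi$ with a smooth truncation of $w$, again extended off Lipschitz functions by Lemma~\ref{L_O}), which generates the inner Skorohod integral with the auxiliary variables $w_1,w_2$. Regrouping gives the $gamma$ formula.

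\emph{Main obstacle.} The conceptual content is concentrated in the hypothesis check above; the genuine labor is the bookkeeping in the computation of $\tilde\delta$, and in particular the $gamma$ case. Each scalar prefactor appearing in~(\ref{MV_formu2}) (and, for $gamma$, its $S_0^{(p)}$-derivative) must be Malliavin-differentiated and the many resulting trace terms carefully combined; for $gamma$ there is the additional subtlety of justifying the second integration by parts, i.e.\ verifying that $\varphi\cdot w$ (suitably truncated) still satisfies the hypotheses of Theorem~\ref{L0} and Lemma~\ref{L_O} — concretely, that $w\in D^{1,2}(\Omega)$ with enough integrability that the new weight again lies in $\mathrm{Dom}\,\tilde\delta$, which relies on moment bounds for $S_t^{(l)}$, $1/\bar S_T^{(l)}$ and $1/h_T$, the last of these using $a\in\Lambda$.
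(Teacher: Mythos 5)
Your proposal follows essentially the same route as the paper: apply Theorem~\ref{L0} with $n=2$, ${\bm X}_1={\bm S}_T'$, ${\bm X}_2=\bar{\bm S}_T'$, ${\bm v}_t=(a_t,S_t^{(l)})$, ${\bm c}={\bm\sigma}'$ to obtain (\ref{MV_formu2}), expand the Skorohod integral via Lemma~\ref{multidelta} and Proposition~\ref{chainrule}, remove the Lipschitz assumption with Lemma~\ref{L_O} and Remark~\ref{remarklo}, and obtain $gamma$ by differentiating the $delta$ representation and integrating by parts a second time, which is exactly how the $w_1,w_2$ weights arise. The only slight rough edge is your phrasing of the second integration by parts (absorbing a truncation of the weight into $\varphi$): since Theorem~\ref{L0} already admits random multipliers $Y_j$, one simply takes $Y_j=w\,\partial S_T^{(p)}/\partial S_0^{(p)}$ (resp.\ $w\,\partial \bar S_T^{(p)}/\partial S_0^{(p)}$), subject to the integrability checks you correctly identify.
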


\begin{remark}
If $f({\bm x}_1,{\bm x}_2)$ is only dependent on ${\bm x}_1$ in Theorem \ref{L12}, namely, $f({\bm S}_T,\bar{\bm S}_T)=f(\bar{\bm S}_T)$, we can obtain the Greeks formulae for European options.
\end{remark}
 
When $m=1$,  we can choose $a_t=\frac{T}{2}-t$ in Theorem \ref{L12} to obtain the specific expression of the formulae in one dimensional case. For example, the formula for $delta$ is given in the following corollary.

\begin{corollary}
Let $m=1$ and $f({ S}_T,\bar {S}_T) $ be the payoff of the complex Asian option. If $f $ is locally integrable and $f({ S}_T,\bar{ S}_T)\in L^2(\Omega,{\cal F},{\mathbb Q})$, then the formula for $delta$ is given by
\begin{equation*}\begin{aligned}
delta: \frac{\partial V}{\partial S_0}=\mathbb{E}_{\mathbb Q}\Bigg[ & f({ S}_T,\bar{ S}_T)\frac{e^{-rT}}{S_0} \Bigg( \frac{1}{\sigma^2 T\bar S_T} \left( S_T-S_0-rT\bar S_T \right) - \frac{T\bar S_T}{2\sigma h_T} \Big(\int_0^T\tilde W_t\mathrm{d}t-\frac{T}{2}\tilde W_T\Big)+  \frac{\int_0^T \int_t^TS_u(\frac{T}{2}-t)\mathrm{d}u\mathrm{d}t }{2h_T}    \\
&  +\frac{1}{2}  -\frac{T\bar S_T\int_0^T\int_t^T   \int_t^\nu S_u(\frac{T}{2}-\nu)(\frac{T}{2}-t)\mathrm{d}u\mathrm{d}\nu\mathrm{d}t}{2h_T^2}       \Bigg)   \Bigg] .
\end{aligned}\end{equation*}
\end{corollary}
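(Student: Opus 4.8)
The plan is to specialize Theorem~\ref{L12} to the one-asset setting, choosing the concrete function $a_t=\frac{T}{2}-t\in\Lambda$, and then simplify all the ingredients. First I would verify the membership $a\in\Lambda$: the constraint $\int_0^T a_t\,\mathrm{d}t=0$ is immediate since $\int_0^T(\frac{T}{2}-t)\,\mathrm{d}t=\frac{T^2}{2}-\frac{T^2}{2}=0$, and the second constraint $\int_0^T a_t\int_0^t S_u\,\mathrm{d}u\,\mathrm{d}t\neq 0$ holds almost surely because $t\mapsto\int_0^t S_u\,\mathrm{d}u$ is a.s.\ strictly increasing and not affine on $[0,T]$ (the paths of $S$ are a.s.\ non-polynomial), so it cannot be $L^2$-orthogonal to the nonconstant linear function $a$. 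With $a$ fixed I would then reduce the scalar quantities appearing in the general formula: $\sigma$ is now a scalar, $\sigma^{-1}_{il}=\sigma^{-1}$, $\sigma_{pl}^{-1}=\sigma^{-1}$, and $\omega=r-\frac12\sigma^2$, while $h_T=\int_0^T(\frac{T}{2}-\nu)\int_0^\nu S_u\,\mathrm{d}u\,\mathrm{d}\nu$.

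The second step is to evaluate the two leading terms. For the term $\frac{1}{T\bar S_T}\sum_i\int_0^T\sigma^{-1}_{il}S_t^{(l)}\,\mathrm{d}\tilde W_t^{(i)}$, I would use the SDE $\mathrm{d}S_t=r S_t\,\mathrm{d}t+\sigma S_t\,\mathrm{d}\tilde W_t$ under $\mathbb{Q}$, which upon integrating and rearranging gives $\sigma\int_0^T S_t\,\mathrm{d}\tilde W_t=S_T-S_0-r\int_0^T S_t\,\mathrm{d}t=S_T-S_0-rT\bar S_T$; dividing by $\sigma^2 T\bar S_T$ reproduces the first displayed term $\frac{1}{\sigma^2 T\bar S_T}(S_T-S_0-rT\bar S_T)$. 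For the term $-\frac{T\bar S_T\sum_i\int_0^T\sigma^{-1}_{il}a_t\,\mathrm{d}\tilde W_t^{(i)}}{2h_T}$, I would apply integration by parts for It\^o integrals (or equivalently $\mathrm{d}(a_t\tilde W_t)=\tilde W_t\,\mathrm{d}a_t+a_t\,\mathrm{d}\tilde W_t$, noting $a$ has finite variation): $\int_0^T(\frac{T}{2}-t)\,\mathrm{d}\tilde W_t=(\frac{T}{2}-T)\tilde W_T-(\frac{T}{2})\cdot 0+\int_0^T\tilde W_t\,\mathrm{d}t=\int_0^T\tilde W_t\,\mathrm{d}t-\frac{T}{2}\tilde W_T$, which after multiplying by $-\frac{T\bar S_T}{2\sigma h_T}$ yields exactly the displayed term $-\frac{T\bar S_T}{2\sigma h_T}\bigl(\int_0^T\tilde W_t\,\mathrm{d}t-\frac{T}{2}\tilde W_T\bigr)$.

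The third step handles the remaining triple-integral terms. The term $\frac{\int_0^T a_t\int_t^T S_u^{(l)}\,\mathrm{d}u\,\mathrm{d}t}{2h_T}$ transfers verbatim to $\frac{\int_0^T\int_t^T S_u(\frac{T}{2}-t)\,\mathrm{d}u\,\mathrm{d}t}{2h_T}$, the constant $\frac12$ is unchanged, and the last term $-\frac{T\bar S_T^{(l)}\int_0^T a_t\int_t^T a_\nu\int_t^\nu S_u^{(l)}\,\mathrm{d}u\,\mathrm{d}\nu\,\mathrm{d}t}{2h_T^2}$ becomes the displayed triple integral with $a_\nu=\frac{T}{2}-\nu$, $a_t=\frac{T}{2}-t$ substituted. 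Collecting all five terms and the common prefactor $\frac{e^{-rT}}{S_0}$ inside $\mathbb{E}_{\mathbb Q}[f(S_T,\bar S_T)\,\cdot\,]$ gives the claimed formula; the applicability of Lemma~\ref{L_O} (via Remark~\ref{remarklo}) extends it from $C_c^\infty$ payoffs to locally integrable $f$ with $f(S_T,\bar S_T)\in L^2(\Omega,\mathcal F,\mathbb Q)$.

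The main obstacle, such as it is, is bookkeeping rather than conceptual: one must carefully track the scalar reductions of all matrix/vector objects in Theorem~\ref{L12} (in particular that the single-index sums $\sum_{i=1}^m$ collapse and ${\bm\sigma}^{-1}_l$ becomes $\sigma^{-1}$), and correctly apply the It\^o integration-by-parts identity to the two stochastic-integral terms so that they become the pathwise integrals $\int_0^T\tilde W_t\,\mathrm{d}t$ and the boundary term $\frac{T}{2}\tilde W_T$. Verifying that $a_t=\frac{T}{2}-t$ indeed lies in $\Lambda$ almost surely is the only place requiring a genuine (if short) argument, and it follows from the strict convexity/non-affineness of $t\mapsto\int_0^t S_u\,\mathrm{d}u$ along a.e.\ Brownian path.
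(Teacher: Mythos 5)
Your proposal is correct and follows essentially the paper's own route: the corollary is obtained precisely by specializing Theorem \ref{L12} to $m=1$ with $a_t=\frac{T}{2}-t$, and your simplifications of the two stochastic-integral terms (using $\sigma\int_0^TS_t\,\mathrm{d}\tilde W_t=S_T-S_0-rT\bar S_T$ and It\^o integration by parts for $\int_0^T(\frac{T}{2}-t)\,\mathrm{d}\tilde W_t$) reproduce the stated formula term by term. One small remark: the membership $a\in\Lambda$ is verified more cleanly by integrating by parts, $\int_0^T\left(\tfrac{T}{2}-t\right)\int_0^tS_u\,\mathrm{d}u\,\mathrm{d}t=-\int_0^T\tfrac{t(T-t)}{2}S_t\,\mathrm{d}t<0$ since $S_t>0$, so no appeal to non-affineness of the paths of $t\mapsto\int_0^tS_u\,\mathrm{d}u$ is needed.
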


Lacking closed form solutions for the expectations (integrals) in Theorems \ref{L1} and \ref{L12}, the Greeks above could only be calculated by numerical methods. The rest of this paper is focused on developing a QMC-based method to calculate the expectations in Theorems \ref{L1} and \ref{L12} more efficiently.

\section{Quasi-Monte Carlo method}
\label{Sec-QMC}
 MC and QMC are two important methods in numerical simulation to calculate high-dimensional integral over the cube $[0,1)^d$,
\begin{equation*}
I(F)=\int_{(0,1]^d}F({\bm u})\mathrm{d}{\bm u}=\mathbb{E}\left[  F({\bm U})  \right] ,
\end{equation*}
where $\bm U$ is a $d$-dimensional uniform random variable in $(0,1]^d$. Compared with the MC method, QMC uses deterministic points, which are more uniformly distributed over $(0,1]^d$ instead of  random points. The QMC estimate of $I(F)$ can be written as 
\begin{equation*}
Q_n(F)=\frac{1}{n}\sum_{i=1}^nF({\bm u}_i),
\end{equation*}
where ${\bm u}_1,\cdots,{\bm u}_n$ are deterministic low discrepancy points. The QMC error can be bounded by the Koksma-Hlawka inequality
\begin{equation}
|I(F)-Q_n(F)|\le V_{\text{HK}}(F)D^{*}({\bm u}_1,\cdots,{\bm u}_n),
\label{KH}
\end{equation}
where $V_{\text{HK}}(F) $ is the variation of $F$ in the sense of  the paper of Hardy and  Krause (see \cite{Niederreiter92}), and $D^{*}({\bm u}_1,\cdots,{\bm u}_n)$ is the star discrepancy. This inequality asserts that the convergence rate of QMC for functions of finite variation is $O(n^{-1}(\ln n)^d)$, which is asymptotically much better than the MC rate $O(n^{-1/2})$. We can choose many low discrepancy sequences to achieve this rate, such as the Sobol' sequence (see \cite{Sobol67}), the Faure sequence (see \cite{Faure82}) and the Halton sequence (see \cite{Halton60}).  

Despite the advantage of QMC over MC in asymptotic convergence rate, there are two crucial factors limiting the efficiency of QMC in practice, namely, the smoothness of the integrand and the high dimensionality (see \cite{Morokoff95}).

Although high dimensionality is a big challenge for QMC, it may still provide better performance if the function has low effective dimension (see \cite{Caflisch97,Wang03}). In order to enhance the efficiency of QMC, some path generation methods (PGMs) are developed to reduce the effective dimension. To introduce PGM, we consider the following expectation in mathematical finance
\begin{equation}
I(g)=\mathbb{E}\left[g({\bm X})\right]
\label{PGM1}
\end{equation}
for some function $g$, where ${\bm X}\sim N({\bm 0}_d,{\bm\Sigma})$ and ${\bm \Sigma}$ is the covariance matrix. The essence of a PGM is to generate the samples of $\bm X$ as ${\bm X}={\bm A}{\bm Y}$, where ${\bm Y}\sim N({\bm 0}_d,{\bm I}_d)$ and the matrix $\bm A$ satisfies ${\bm A}{\bm A}'={\bm \Sigma}$. This is equivalent to transform the integral (\ref{PGM1}) as 
\begin{equation}
I(g)=\mathbb{E}\left[g({\bm X})\right]=\mathbb{E}\left[g({\bm AY	})\right].
\label{PGM2}
\end{equation}
The choices of $\bm A$ make no difference in MC setting from the point of view of variance (because the variance is unchanged), whereas different choices of $\bm A$ may lead to different efficiency in QMC setting due to the possible change of the effective dimension and the smoothness properties. Classical methods of PGM include standard (STD) construction, Brownian bridge (BB) construction and principal components analysis (PCA) construction (see \cite{Glasserman04}). Many new methods are  proposed to reduce the effective dimension, such as the linear transform (LT) construction (see \cite{Imai04}), the  clustering QR (CQR) construction (see \cite{Weng17}) and the GPCA construction (see \cite{Xiao19}). The GPCA construction performs best in many cases, especially when the integrand has complicated structure, since it uses the structure and information of the integrand (see \cite{Xiao18}). 

Bad smoothness is another factor that could seriously decrease the efficiency of the QMC method. Strenuous efforts are devoted to conquering it, such as the orthogonal transformation (OT) method (see \cite{Wang13}), the QR method (see \cite{He14}), and the CQMC method (see \cite{Xiao18}). 

In practice, the Koksma-Hlawka inequality is not very useful although it asserts that QMC provides a better asymptotic convergence rate. The first reason is that it is hard to calculate the variation $V_{\text{HK}}(f)$. The second reason is that many functions in finance have unbounded variation. Moreover, since the QMC points are deterministic, we need to randomize QMC points to obtain error estimation. Randomized quasi-Monte Carlo (RQMC) methods are proposed to solve those problems, such as the scrambling, the digital shift, and the random shift (see \cite{Owen98,Matousek98,LEcuyer05}).


\section{QMC-based conditional Malliavin method for Asian option Greeks}
\label{Sec:QMC-CMV}
We focus on calculating the following expectation
\begin{equation}
I(g)=\mathbb{E}\left[ g({\bm X}) \right],
\label{4-1}
\end{equation}
where ${\bm X}=(X_1,\cdots,X_d)'$ is a $d$-dimensional random vector with independent components, and $g({\bm X})$ is integrable. Let $\rho_k(x_k)$ be the density function of $X_k$, and $\rho({\bm x}_u)=\Pi_{k\in u}\rho_k(x_k)$ for $u\subseteq \{1,\cdots, d\}$.

\subsection{Conditional QMC method } 
\label{Sec:CQMC}
CQMC method is a smoothing method similar to the CMC method (see \cite{Fu97}). We describe the main idea of CQMC method below. We refer to \cite{Xiao18} for details.

Suppose that the function $g$ satisfies the so-called variable separation condition 
\begin{equation}
g({\bm x})=h({\bm x}){\bm 1}\{  \psi_d({\bm z})<x_j<\psi_u({\bm z})  \},
\label{4-2}
\end{equation}
for some $x_j$, where $h$ is a continuous function and $\psi_d$ and $\psi_u$ are functions of ${\bm z}:=(x_1,\cdots,x_{j-1},x_{j+1},\cdots,x_d)'$. Then the expectation (\ref{4-1}) can be written as
\begin{equation*}
I(g)=\mathbb{E}\left[ g({\bm X}) \right] =\int_{\mathbb{R}^{d-1}}\left(    \int_{\psi_d}^{\psi_u} h({\bm x})\rho_j(x_j)\mathrm{d}x_j  \right){\bm 1}\{\psi_d({\bm z})<\psi_u({\bm z})\}\rho({\bm z})\mathrm{d}{\bm z} .
\end{equation*}
Suppose that the conditional expectation
\begin{equation}
\mathbb{E}\left[ g({\bm X}) | {\bm Z}\right] =    \int_{\psi_d}^{\psi_u} h({\bm x})\rho_j(x_j)\mathrm{d}x_j  {\bm 1}\{\psi_d({\bm z})<\psi_u({\bm z})\}=:G({\bm Z})
\label{4-5}
\end{equation}
is analytically available, then we have
\begin{equation}
I(g)=\mathbb{E}\left[ g({\bm X}) \right] =\mathbb{E}\left[ \mathbb{E}\left[ g({\bm X}) | {\bm Z}\right]     \right] =\mathbb{E}\left[ G({\bm Z}) \right].
\label{4-6}
\end{equation}
Therefore, $G({\bm Z})$ is a new estimate of  $I(g)$, which is called the conditional estimate.

Many functions in financial engineering satisfy the separation of variables condition (see the expression (\ref{functionofg}) for the function $g$ of Asian option Greeks in the next subsection). The method of conditional expectation above is a form of CMC to reduce the variance. The idea can also be used in QMC setting if we use the QMC points instead of MC points in calculating ${\mathbb E}[G({\bm Z})]$, and we call it CQMC method.

It is well-known that the performance of QMC integration strongly depends on the smoothness of the integrand. If the integrand possesses excellent smoothness, the calculation efficiency may be increased significantly. Furthermore, the effective dimension reduction method (such as the GPCA method) can be used to achieve better results. 

The next theorem can be proved similarly as the Theorem A.1 in \cite{Zhang20}. Therefore, we omit the proof of Theorem \ref{THM:CMC}. It shows the good smoothness of the new function $G$. Before that we need to introduce a definition of  uniform convergence.

 \begin{definition}
We say the integral $\int \xi({\bm y},{\bm x})\mathrm{d}{\bm x}$ converges uniformly on a set $\cal O$, if for any $\varepsilon>0$ and ${\bm y}\in {\cal O}$, there exists a constant $A$ independent of ${\bm y}$ such that $\left|   \int_{\Vert {\bm x}\Vert>A} \xi ({\bm y},{\bm x})\mathrm{d}{\bm x} \right|<\varepsilon$.

\end{definition}

Note that here ${\bm y}$ and ${\bm x}$ can be scalars or vectors, and $\Vert {\bm x}\Vert$ represents the Euclidean norm of ${\bm x}$.

\begin{theorem}
Suppose that function $g$ has the form in (\ref{4-2}) and $G$ is defined by (\ref{4-5}). Assume that the following conditions are satisfied
\begin{itemize}
 \item[(a)]   $\psi_d$ and $\psi_u$ are continuous;
\item[(b)]   $h$ is nonnegative.
\end{itemize}
 Then the following two statements are equivalent
 \begin{itemize}
 \item[(c)] If one of $\psi_u$ and $\psi_d$ is infinite, then the integral $\int_{\mathbb{R}} h({\bm x})\rho_j(x_j)\mathrm{d}x_j$ converges uniformly on a compact neighborhood of ${\bm z}$, where ${\bm z}:=(x_1,\cdots,x_{j-1},x_{j+1},\cdots,x_d)'$ for some $j$;
 \item[(d)] $G$ is continuous.
\end{itemize}
\label{THM:CMC} 
\end{theorem}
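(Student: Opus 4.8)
The plan is to prove the equivalence of (c) and (d) under the standing assumptions (a) and (b), treating separately the easy case in which both $\psi_d$ and $\psi_u$ are finite and the substantive case in which at least one is infinite. When both endpoints are finite, continuity of $G$ on the region $\{\psi_d(\bm z)<\psi_u(\bm z)\}$ follows from dominated convergence applied to $\int_{\psi_d(\bm z)}^{\psi_u(\bm z)} h(\bm x)\rho_j(x_j)\,\mathrm{d}x_j$ (the integrand is jointly continuous in $(\bm z,x_j)$ and locally dominated by an integrable function on a compact set), together with continuity of the indicator cutoff: on the open set $\{\psi_d<\psi_u\}$ the indicator is locally constant, and on the boundary set $\{\psi_d=\psi_u\}$ the integral over the degenerate interval vanishes and matches the limit from inside. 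So in that case (d) always holds, and (c) is vacuously true, so the equivalence is immediate.

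For the main case, suppose (without loss of generality, after possibly flipping a sign) that $\psi_u\equiv+\infty$ on a compact neighborhood $K$ of a fixed point $\bm z_0$ with $\psi_d(\bm z_0)<+\infty$. First I would prove (c) $\Rightarrow$ (d). Write $G(\bm z)=\int_{\psi_d(\bm z)}^{\infty} h(\bm x)\rho_j(x_j)\,\mathrm{d}x_j$ on $K$ (the indicator is identically $1$ there since $\psi_u=\infty$). Split the integral at a large constant $A$ furnished by the uniform convergence hypothesis: $\int_{\psi_d(\bm z)}^{A}$ is continuous in $\bm z$ by dominated convergence exactly as above (now on the compact $x_j$-range $[\,\inf_K\psi_d,\,A\,]$), while $\left|\int_{A}^{\infty} h\rho_j\,\mathrm{d}x_j\right|<\varepsilon$ uniformly in $\bm z\in K$; letting $\varepsilon\to0$ gives continuity of $G$ at $\bm z_0$. (I should also note that $h\ge0$ from (b) makes the tail integral monotone in $A$, which streamlines the estimate, and handles the case $\psi_d(\bm z_0)=\pm\infty$ as a limiting subcase.)

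The harder direction, and the step I expect to be the main obstacle, is (d) $\Rightarrow$ (c): from continuity of $G$ near $\bm z_0$ I must recover the uniform convergence of the tail integral. The idea is a contrapositive / Dini-type argument. Define $R_A(\bm z):=\int_A^\infty h(\bm x)\rho_j(x_j)\,\mathrm{d}x_j\ge0$; this is nonincreasing in $A$ and, for each fixed $\bm z$ with $G$ finite, $R_A(\bm z)\to0$ as $A\to\infty$. Each $R_A$ is continuous in $\bm z$ on a neighborhood of $\bm z_0$ by the same dominated-convergence reasoning as before (and $G(\bm z)-R_A(\bm z)=\int_{\psi_d(\bm z)}^A h\rho_j$ is continuous, with $G$ continuous by hypothesis, so $R_A$ is continuous). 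Thus $R_A\downarrow 0$ pointwise through continuous functions on a compact neighborhood of $\bm z_0$; by Dini's theorem the convergence is uniform on that neighborhood, which is precisely statement (c). The delicate points to get right are: ensuring $G(\bm z_0)<\infty$ so the $R_A$ are genuinely finite and decrease to $0$ (this should follow because otherwise $G$ is not even well-defined/continuous as a real-valued function, so one may assume finiteness throughout the relevant neighborhood), and confirming that the neighborhood on which $\psi_u\equiv\infty$ (or $\psi_d\equiv-\infty$) can be taken compact — which is given in the statement of (c). I would also remark that the argument is symmetric in the two endpoints and, when both are infinite, is applied to each tail separately. Since this mirrors Theorem A.1 of \cite{Zhang20}, I would cite that proof and include only the adaptation needed for the present indicator-with-separated-variables form.
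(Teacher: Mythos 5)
Your proposal is correct. Note that the paper contains no proof of Theorem \ref{THM:CMC} to compare against: it is explicitly omitted with a pointer to Theorem A.1 of \cite{Zhang20}, and your argument — dominated convergence on the truncated range plus the uniformly small tail for (c) $\Rightarrow$ (d), and the Dini-type argument for (d) $\Rightarrow$ (c), where nonnegativity of $h$ gives monotonicity of the tails $R_A$ and continuity of $G$ gives continuity of each $R_A$, together with the trivial treatment of the case of finite $\psi_d,\psi_u$ — is exactly the standard route by which that cited result is established, so there is nothing methodologically different to report. The only point worth stating explicitly in a write-up is the one you use implicitly: condition (c) must be understood as uniform convergence of the improper integral at the infinite endpoint(s) actually entering $G$ (equivalently, of $\int h({\bm x}){\bm 1}\{\psi_d<x_j<\psi_u\}\rho_j(x_j)\,\mathrm{d}x_j$), since the part of $\int_{\mathbb{R}} h({\bm x})\rho_j(x_j)\,\mathrm{d}x_j$ lying outside $(\psi_d,\psi_u)$ is not controlled by $G$ and could not follow from (d).
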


Theorem \ref{THM:CMC} gives conditions for which $G$ is a continuous function. For most common Asian options, condition (a) is hold. Note that condition (c) is still sufficient for (d) even without the condition (b). Thus if both $\psi_d$ and $\psi_u $ are finite, then $G$ is continuous, and if one of them is infinite, we need an extra condition of uniform convergence. The next lemma, which is similar to the Lemma 4.2 in \cite{Zhang20}, shows that the assumption of uniform convergence is also satisfied for most common Asian options.
 
\begin{lemma}\label{L3}
For fixed $j$, assume that $h({\bm x})$ has the form $h({\bm x})=\sum_{k=0}^n A_kx_j^ke^{Bx_j}-C$ for some positive integer $n$, where $A_k$, $B$ and $C$ are functions of ${\bm z}$ (given in Theorem \ref{THM:CMC}), bounded on a compact set $\Lambda$ but independent of $x_j$. Then $\int h({\bm x})\phi(x_j)\mathrm{d}x_j$ converges uniformly on $\Lambda$, where $\phi$ is the density function of the standard normal variable $x_j$.
\end{lemma}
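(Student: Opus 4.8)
The plan is to reduce the whole statement to a single, $\bm z$-independent Gaussian-type tail estimate obtained by completing the square in the exponent. First I would use the hypothesis that $A_0,\dots,A_n$, $B$ and $C$ are bounded on the compact set $\Lambda$ to fix one constant
\[
M:=\max\bigl\{\,\textstyle\sup_{\Lambda}|A_0|,\ \dots,\ \sup_{\Lambda}|A_n|,\ \sup_{\Lambda}|B|,\ \sup_{\Lambda}|C|\,\bigr\}<\infty,
\]
which will dominate every coefficient simultaneously, uniformly over $\bm z\in\Lambda$. (If one only assumed $A_k,B,C$ to be continuous rather than bounded, compactness of $\Lambda$ would still furnish such an $M$, so the argument is unaffected by that distinction.)

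Next, for the $k$-th summand I would bound, for every $\bm z\in\Lambda$ and every $x_j\in\mathbb{R}$,
\[
\bigl|A_k x_j^k e^{Bx_j}\phi(x_j)\bigr|\ \le\ \frac{M}{\sqrt{2\pi}}\,|x_j|^k e^{M|x_j|-x_j^2/2}\ =\ \frac{M e^{M^2/2}}{\sqrt{2\pi}}\,|x_j|^k e^{-(|x_j|-M)^2/2}\ =:\ \Phi_k(x_j),
\]
where the middle inequality uses only $|A_k|,|B|\le M$ and the last equality is the completion of the square. The point is that $\Phi_k$ is integrable on $\mathbb{R}$ (a polynomial times a shifted Gaussian) and does \emph{not} depend on $\bm z$. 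For the constant term I would likewise use $|C\phi(x_j)|\le M\phi(x_j)=:\Phi_{n+1}(x_j)$, again integrable and $\bm z$-free. Hence $|h(\bm x)\phi(x_j)|\le\sum_{k=0}^{n}\Phi_k(x_j)+\Phi_{n+1}(x_j)$ pointwise, with a dominating function that is independent of $\bm z\in\Lambda$.

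Finally, given $\varepsilon>0$, the integrability of each $\Phi_k$ over $\mathbb{R}$ lets me pick $A_k^{*}<\infty$ with $\int_{|x_j|>A_k^{*}}\Phi_k(x_j)\,\mathrm{d}x_j<\varepsilon/(n+2)$ for $k=0,\dots,n+1$, and then set $A:=\max_{k}A_k^{*}$. For every $\bm z\in\Lambda$ this gives
\[
\left|\int_{|x_j|>A}h(\bm x)\phi(x_j)\,\mathrm{d}x_j\right|\ \le\ \sum_{k=0}^{n}\int_{|x_j|>A}\Phi_k(x_j)\,\mathrm{d}x_j+\int_{|x_j|>A}\Phi_{n+1}(x_j)\,\mathrm{d}x_j\ <\ \varepsilon,
\]
and since $A$ does not depend on $\bm z$, this is precisely uniform convergence of $\int h(\bm x)\phi(x_j)\,\mathrm{d}x_j$ on $\Lambda$ in the sense of the Definition preceding Theorem \ref{THM:CMC}. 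The argument is essentially routine; the only step needing a moment's care is the completion of the square, which is exactly what converts the superficially $\bm z$-dependent integrand $|x_j|^k e^{Bx_j-x_j^2/2}$ into a fixed integrable majorant, together with the remark that compactness of $\Lambda$ makes the constant $M$ — and hence the entire family of majorants — uniform in $\bm z$.
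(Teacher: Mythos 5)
Your proof is correct: the uniform bound $M$ from boundedness of $A_k$, $B$, $C$ on the compact set $\Lambda$, the completion of the square $M|x_j|-x_j^2/2=-\tfrac12(|x_j|-M)^2+M^2/2$ yielding a $\bm z$-independent integrable majorant, and a single tail cutoff $A$ give exactly the uniform convergence required by the paper's definition. The paper omits its own proof (deferring to Lemma 4.2 of Zhang and Wang), and your argument is precisely the standard one that reference relies on, so nothing is missing.
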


By Lemma \ref{L3} we can easily verify the uniform convergence condition (c) in Theorem \ref{THM:CMC}. Fortunately, in common cases the function $h$ has the form given in Lemma \ref{L3} (see Sections~\ref{Sub_ill} for the following concrete examples: binary Asian options, Asian call options and up-and-out Asian call options).  

 \subsection{QMC-based conditional Malliavin method for calculating Greeks} 
\label{Sub:QMC-CMV}
For convenience, from now on we only consider the Greeks for simple Asian options in the one-dimensional case (hereafter we omit the `simple'). However, all results in this subsection can be extended to the general situation. 

We presented the formulae for Asian option Greeks by Malliavin calculus in Corollary \ref{L1C}. For $delta$, $gamma$ and $vega$, the Geeks have the following expression
\begin{equation}
\label{greek_f}
\begin{aligned}
&Greek \\
=&\mathbb{E}_{\mathbb Q}\left[e^{-rT}f(\bar S_T)\cdot weight\left(\bar S_T,S_T,\int_0^TS_t\left( \tilde W_t-\sigma t   \right )\mathrm{d}t, \int_0^TS_u\int_u^T S_t\left(\tilde W_t-\sigma t\right)\mathrm{d}t\mathrm{d}u  \right)\right]\\
=&:\mathbb{E}_{\mathbb Q}[ \hat\theta]\\
=&:\theta.
\end{aligned}
\end{equation}

In the numerical simulation, we usually take MC methods to estimate $\theta$, and this requires us to discretize the continuous path first. We may choose equidistant points $0=t_0<t_1<\cdots<t_d=T$ between time $0$ and the maturity date $T$, at which we get the price of  underlying asset $S_{t_j}$ in discrete times. For simplicity, we denote $S_{t_j}$ by $S_j$. Based on the expression (\ref{qstock}) of $S$ under the risk-neutral probability measure $\mathbb{Q}$, we have
  \begin{equation}
\begin{aligned}
S_j=S_0\exp\left( \omega t_j+\sigma \tilde{W}_{t_j}  \right),\quad 0\le t\le T.
\end{aligned}
\label{BS-SJ}
\end{equation}
To simulate the continuous-time average $\bar S_T=\frac{1}{T}\int_0^T S_t\mathrm{d}t$, let
\begin{equation*}
S_A:=\frac{1}{d}\sum_{j=1}^d S_j
\end{equation*}
be the discrete average of the asset prices. Then by the theory of Riemann integral, we have the pointwise convergence as follows
\begin{equation*}
\lim_{d\rightarrow +\infty}S_A=\bar S_T.
\end{equation*}
It is natural to take the following simulation estimate $\hat\theta_d$ to approximate $\theta$ in (\ref{greek_f})
\begin{equation}
\label{greek_f2}
\hat\theta_d=e^{-rT}f( S_A)\cdot weight\left( S_A,S_d,   \frac{T}{d}\sum_{j=1}^d S_j(\tilde W_{t_j}-\sigma t_j)  , \frac{T^2}{d^2}\sum_{i=1}^d\sum_{j=i}^d S_iS_j(\tilde W_{t_j}-\sigma t_j)   \right).
\end{equation}

Although in most literature $\hat\theta_d$ is treated as identical to $\hat\theta$ directly, we can not ignore the fact that $\hat \theta_d$ is a biased estimate of $\theta$. Fortunately, we can prove that $\hat\theta_d$ is an asymptotic unbiased estimate with respect to $d$ when $f$ is a continuous function with linear growth.

\begin{theorem}
 Let $f(\bar S_T) $ be the payoff of the Asian option. If $f$ is continuous with linear growth, then $\hat\theta_d$ is an asymptotic unbiased estimate of $\theta$ with respect to $d$, i.e.,
 \begin{equation*}
  \lim_{d\rightarrow +\infty}\mathbb{E}_{\mathbb Q}  [\hat\theta_d  ] =\mathbb{E}_{\mathbb Q}  [\hat\theta ]=\theta.
  \end{equation*}
\label{THM:bias} 
\end{theorem}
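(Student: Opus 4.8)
The plan is to prove convergence of $\mathbb{E}_{\mathbb Q}[\hat\theta_d]$ to $\mathbb{E}_{\mathbb Q}[\hat\theta]=\theta$ by combining almost-sure convergence of the integrand with a uniform integrability argument, so that the dominated (or Vitali) convergence theorem applies. The first step is to establish the pointwise (a.s.) convergence $\hat\theta_d\to\hat\theta$ as $d\to\infty$. This amounts to checking that each argument fed into the continuous $weight$ function converges a.s.: $S_A=\frac1d\sum_{j=1}^d S_j\to\bar S_T$ (already noted in the text via the Riemann-integral theory, using the a.s. continuity of $t\mapsto S_t$), $S_d=S_{t_d}=S_T$ trivially, and the two Riemann sums $\frac{T}{d}\sum_j S_j(\tilde W_{t_j}-\sigma t_j)$ and $\frac{T^2}{d^2}\sum_{i\le j} S_iS_j(\tilde W_{t_j}-\sigma t_j)$ converge a.s. to $\int_0^T S_t(\tilde W_t-\sigma t)\,\mathrm{d}t$ and $\int_0^T S_u\int_u^T S_t(\tilde W_t-\sigma t)\,\mathrm{d}t\,\mathrm{d}u$ respectively. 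The integrands here are a.s. continuous in $t$ (resp.\ jointly continuous in $(u,t)$) on the compact $[0,T]$ (resp.\ $[0,T]^2$), hence uniformly continuous on a.e.\ path, which forces Riemann-sum convergence; one must also note $\bar S_T>0$ a.s.\ so that the denominators appearing in $weight$ stay away from $0$ along the path and the continuity of $weight$ at the limit point is genuinely applicable. Then $f(S_A)\to f(\bar S_T)$ by continuity of $f$, and $\hat\theta_d\to\hat\theta$ a.s.

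The second step is to upgrade a.s.\ convergence to $L^1(\mathbb Q)$ convergence, for which I would prove that $\{\hat\theta_d\}_{d\ge1}$ is bounded in $L^p(\mathbb Q)$ for some $p>1$ (e.g.\ $p=2$), which gives uniform integrability. Here the linear-growth hypothesis on $f$ enters: $|f(S_A)|\le C(1+S_A)$, and $S_A=\frac1d\sum_j S_j$ with each $S_j=S_0\exp(\omega t_j+\sigma\tilde W_{t_j})$ lognormal; by Jensen/convexity or just the triangle inequality in $L^{2p}$, $\|S_A\|_{L^{2p}}\le\max_j\|S_j\|_{L^{2p}}$ which is bounded uniformly in $d$ since $\sup_{t\le T}\mathbb{E}_{\mathbb Q}[S_t^{2p}]<\infty$. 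The delicate part is bounding the $weight$ factor uniformly in $d$ in $L^{q}$ for a conjugate-type exponent: $weight$ is a rational expression in $S_A,S_d$ and the two sums, with $S_A$ (and in $vega$ also $S_A^2$) in the denominator. So I need uniform $L^q$ control of the discretized quantities \emph{and} of the negative moments $\mathbb{E}_{\mathbb Q}[S_A^{-q}]$. For the numerators, Minkowski's integral inequality applied to the Riemann sums gives bounds by $\sup_{u,t}\|S_uS_t(\tilde W_t-\sigma t)\|_{L^q}<\infty$, uniform in $d$; for the negative moments, $S_A\ge \frac1d\sum_j S_j\ge (\prod_j S_j)^{1/d}$ by AM--GM, and $(\prod_j S_j)^{1/d}=S_0\exp(\frac1d\sum_j(\omega t_j+\sigma\tilde W_{t_j}))$ is itself lognormal with parameters bounded uniformly in $d$ (the drift part is a Riemann sum converging to $\frac{1}{T}\int_0^T\omega t\,\mathrm{d}t$, the variance part is a quadratic form with bounded operator norm), so $\sup_d\mathbb{E}_{\mathbb Q}[S_A^{-q}]<\infty$. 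Putting these together with Hölder's inequality yields $\sup_d\|\hat\theta_d\|_{L^p(\mathbb Q)}<\infty$ for a suitable $p>1$.

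The third step is routine: a.s.\ convergence plus $L^p$-boundedness ($p>1$) implies uniform integrability of $\{\hat\theta_d\}$, hence by Vitali's convergence theorem $\hat\theta_d\to\hat\theta$ in $L^1(\mathbb Q)$, and in particular $\mathbb{E}_{\mathbb Q}[\hat\theta_d]\to\mathbb{E}_{\mathbb Q}[\hat\theta]$; the equality $\mathbb{E}_{\mathbb Q}[\hat\theta]=\theta$ is the content of Corollary \ref{L1C} (equivalently the definition in (\ref{greek_f})). I expect the main obstacle to be the uniform negative-moment bound for $S_A$ together with the uniform-in-$d$ $L^q$ control of the double Riemann sum in the $vega$ weight, since that is where the denominators $(T\bar S_T)^{-2}$ and the iterated stochastic-integral-type terms interact; the AM--GM lower bound for $S_A$ is the key trick that makes the negative moments tractable, and one should double-check that the constants coming out of it do not blow up with $d$ (they do not, because both the linear drift term $\frac1d\sum_j\omega t_j$ and the Gaussian term $\frac{\sigma}{d}\sum_j\tilde W_{t_j}$ have moment generating functions bounded uniformly in $d$). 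A minor technical point to handle carefully is that $weight$, being only \emph{continuous} (not Lipschitz or bounded) at the relevant arguments, still composes well with a.s.-convergent sequences — this is fine because continuity is all that is needed for the a.s.\ limit, and integrability is handled separately by the moment bounds.
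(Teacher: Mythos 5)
Your argument is correct, but it takes a genuinely different route from the paper's. The paper reduces $\hat\theta_d$ to three families of integrands (namely $f(S_A)S_d^{m_1}/S_A^{m_2}$, the same times the single Riemann sum, and $f(S_A)/S_A^2$ times the double Riemann sum), notes pointwise convergence from continuity of $f$, and then applies dominated convergence directly: each integrand is sandwiched between $d$-independent random variables built from pathwise extremes, e.g. $C\left(1+\max_{0\le t\le T}S_t\right)S_d^{m_1}/\left(\min_{0\le t\le T}S_t\right)^{m_2}$ multiplied by positive or negative parts of extremes of $S_t(\tilde W_t-\sigma t)$ and $S_uS_t(\tilde W_t-\sigma t)$; the integrability of these dominating variables (all moments of $\max_t S_t$, $1/\min_t S_t$, etc.) is left implicit. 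You instead establish uniform integrability via a uniform-in-$d$ $L^p$ bound with $p>1$ and invoke Vitali: Minkowski for the Riemann sums, H\"older to combine factors, and---your key device---the AM--GM bound $S_A\ge\bigl(\prod_j S_j\bigr)^{1/d}$, whose right-hand side is lognormal with drift and variance bounded uniformly in $d$, to control the negative moments $\mathbb{E}_{\mathbb Q}[S_A^{-q}]$. The paper's pathwise domination is shorter and handles the denominators with the single bound $S_A\ge\min_{0\le t\le T}S_t$, which plays the same role as your AM--GM step in continuous-time form; your version makes the integrability bookkeeping explicit (the Gaussian moment-generating-function computations replace the unstated fact that the running max/min of geometric Brownian motion have all positive and negative moments) and isolates exactly which uniform moment bounds are needed, at the cost of more H\"older-exponent bookkeeping. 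Both proofs share the same first step (a.s. convergence of the discretized arguments, continuity of $f$ and of the weight away from $\{\bar S_T=0\}$) and both conclude with $\mathbb{E}_{\mathbb Q}[\hat\theta]=\theta$ from Corollary \ref{L1C}.
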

\begin{proof}
From the formulae of Greeks in Corollary \ref{L1C}, we only need to consider the convergence of the following expectations
\begin{equation*}\begin{aligned}
&\mathbb{E}_{\mathbb Q}\left[  f(S_A)  \frac{S_d^{m_1}}{S_A^{m_2}} \right], \\ 
&\mathbb{E}_{\mathbb Q}\left[  f(S_A)  \frac{S_d^{m_1}}{S_A^{m_2}} \frac{1}{d}\sum_{j=1}^d S_j(\tilde W_{t_j}-\sigma t_j)   \right],  \\ 
&\mathbb{E}_{\mathbb Q}\left[  \frac{f(S_A)}{S_A^2} \frac{1}{d^2}\sum_{i=1}^d\sum_{j=i}^d S_iS_j(\tilde W_{t_j}-\sigma t_j) \right], 
\end{aligned}\end{equation*}
where $m_1$ and $m_2$ are nonnegative integers.

Since $f$ is continuous, we have $\lim_{d\rightarrow +\infty}f(S_A)=f(\bar S_T)$. Thus each integrand in the above expectations are convergent pointwisely. By dominated convergence theorem, we only need to consider the upper and lower bounds of each integrand.

For the first integrand, since $f$ is nonnegative with linear growth, we have
 \begin{equation*}
 \begin{aligned}
0\le   f(S_A)  \frac{S_d^{m_1}}{S_A^{m_2}}\le C(1+S_A)  \frac{S_d^{m_1}}{S_A^{m_2}}\le  C\left(1+\max_{0\le t\le T}S_t\right)\frac{S_d^{m_1}}{(\min_{0\le t\le T}S_t  )^{m_2}  },
\end{aligned}
\end{equation*}
where $C$ is a positive constant.

For the second integrand, we have
\begin{equation*}
 \begin{aligned}
   f(S_A)  \frac{S_d^{m_1}}{S_A^{m_2}} \frac{1}{d}\sum_{j=1}^d S_j(\tilde W_{t_j}-\sigma t_j) 
&\ge f(S_A)  \frac{S_d^{m_1}}{S_A^{m_2}}\min_{0\le t\le T}S_t(\tilde W_t-\sigma t)\\
  &\ge -   C\left(1+\max_{0\le t\le T}S_t\right)\frac{S_d^{m_1}}{(\min_{0\le t\le T}S_t  )^{m_2}  }    \left[\min_{0\le t\le T}S_t(\tilde W_t-\sigma t)\right]^-,
\end{aligned}
\end{equation*}
and
\begin{equation*}
 \begin{aligned}
   f(S_A)  \frac{S_d^{m_1}}{S_A^{m_2}} \frac{1}{d}\sum_{j=1}^d S_j(\tilde W_{t_j}-\sigma t_j)  
 &\le  f(S_A)  \frac{S_d^{m_1}}{S_A^{m_2}}\max_{0\le t\le T}S_t(\tilde W_t-\sigma t) \\
  &\le     C\left(1+\max_{0\le t\le T}S_t\right)\frac{S_d^{m_1}}{(\min_{0\le t\le T}S_t  )^{m_2}  }    \left[\max_{0\le t\le T}S_t(\tilde W_t-\sigma t)\right]^+,
\end{aligned}
\end{equation*}
where $[\cdot]^+$ and $[\cdot]^-$ denotes the positive part and the negative part of a function, respectively.

For the third integrand, we have
\begin{equation*}
 \begin{aligned}
 \frac{f(S_A)}{S_A^2} \frac{1}{d^2}\sum_{i=1}^d\sum_{j=i}^d S_iS_j(\tilde W_{t_j}-\sigma t_j)&\ge
  \frac{f(S_A)}{S_A^2} \frac{d^2+d}{2d^2}  \min_{0\le u\le T\atop u\le t\le T}S_uS_t(\tilde W_{t}-\sigma t)\\
  &\ge - \frac{ C\left(1+\max_{0\le t\le T}S_t\right)}{\left(    \min_{0\le t\le T} S_t\right)^2}   \left[ \min_{0\le u\le T\atop u\le t\le T}S_uS_t(\tilde W_{t}-\sigma t)\right]^-,
 \end{aligned}
\end{equation*}
and
\begin{equation*}
 \begin{aligned}
 \frac{f(S_A)}{S_A^2} \frac{1}{d^2}\sum_{i=1}^d\sum_{j=i}^d S_iS_j(\tilde W_{t_j}-\sigma t_j)&\le
  \frac{f(S_A)}{S_A^2} \frac{d^2+d}{2d^2}  \max_{0\le u\le T\atop u\le t\le T}S_uS_t(\tilde W_{t}-\sigma t)\\
  &\le  \frac{ C\left(1+\max_{0\le t\le T}S_t\right)}{\left(    \min_{0\le t\le T} S_t\right)^2}   \left[ \max_{0\le u\le T\atop u\le t\le T}S_uS_t(\tilde W_{t}-\sigma t)\right]^+.
 \end{aligned}
\end{equation*}

Thus the conclusion follows.
\end{proof}

From the above proof, we can also get the following result in the Asian options pricing.
\begin{corollary}
 Let $f(\bar S_T) $ be the payoff of the Asian option. If $f$ is continuous with linear growth, then we have 
 \begin{equation*}
 \lim_{d\rightarrow +\infty}\mathbb{E}_{\mathbb Q}  [f(S_A)  ] =\mathbb{E}_{\mathbb Q}  [f(\bar S_T) ],
 \end{equation*}
 where $S_A=\frac{1}{d}\sum_{j=1}^d S_j$ is the discrete average of the asset prices.
\end{corollary}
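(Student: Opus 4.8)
The plan is to obtain the statement as a direct application of the dominated convergence theorem, exactly in the spirit of the proof of Theorem~\ref{THM:bias}, but with the scalar payoff $f$ in place of the full Greek integrand. First I would fix a sample path $\bm\omega\in\Omega$ for which $t\mapsto S_t(\bm\omega)$ is continuous on $[0,T]$; this holds $\mathbb{Q}$-a.s.\ by the explicit representation \eqref{qstock}. For such a path, $S_A=\frac1d\sum_{j=1}^d S_{t_j}$ is a Riemann sum of the continuous function $t\mapsto S_t$ over the equidistant partition $0=t_0<\cdots<t_d=T$, so $S_A\to\bar S_T$ as $d\to\infty$ by the theory of the Riemann integral, and then continuity of $f$ gives the pointwise (a.s.) convergence $f(S_A)\to f(\bar S_T)$.

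Next I would exhibit a $d$-independent integrable dominating function. Since $f$ is nonnegative with linear growth, there is a constant $C>0$ with $0\le f(S_A)\le C(1+S_A)$, and because each $S_{t_j}$ is bounded by $S^{*}:=\max_{0\le t\le T}S_t$ we get the uniform bound $0\le f(S_A)\le C(1+S^{*})$, whose right-hand side does not depend on $d$. It then remains to check $\mathbb{E}_{\mathbb Q}[S^{*}]<\infty$: from \eqref{qstock} one has $S_t=S_0\exp(\omega t+\sigma\tilde W_t)$, hence $S^{*}\le S_0 e^{|\omega|T}\exp\!\big(|\sigma|\max_{0\le t\le T}|\tilde W_t|\big)$, and by the reflection principle (or Doob's maximal inequality) $\mathbb{E}_{\mathbb Q}\big[e^{c\max_{0\le t\le T}|\tilde W_t|}\big]<\infty$ for every $c>0$, so in particular $\mathbb{E}_{\mathbb Q}[S^{*}]<\infty$. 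Applying the dominated convergence theorem to $f(S_A)$ with dominating function $C(1+S^{*})$ yields $\lim_{d\to\infty}\mathbb{E}_{\mathbb Q}[f(S_A)]=\mathbb{E}_{\mathbb Q}[f(\bar S_T)]$.

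The only genuinely non-mechanical point is the integrability of the envelope $S^{*}$; this is where the linear-growth hypothesis on $f$ (together with the fact that suprema of geometric Brownian motion possess moments of all orders) is used, and it is the step I would expect to be the main obstacle if one wanted a fully self-contained argument. Everything else — a.s.\ continuity of the paths, convergence of Riemann sums, the crude bound $S_A\le S^{*}$ — is routine. Alternatively, one could simply invoke the estimates already established inside the proof of Theorem~\ref{THM:bias}: the bound there for the first integrand, specialized to $m_1=m_2=0$, is precisely $0\le f(S_A)\le C(1+\max_{0\le t\le T}S_t)$, so the corollary is the $m_1=m_2=0$ instance of that argument.
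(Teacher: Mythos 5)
Your proposal is correct and follows essentially the same route as the paper: the paper derives this corollary directly from the proof of Theorem \ref{THM:bias}, namely pointwise convergence $f(S_A)\to f(\bar S_T)$ (Riemann sums plus continuity of $f$) together with dominated convergence using the bound $0\le f(S_A)\le C(1+\max_{0\le t\le T}S_t)$, which is exactly the $m_1=m_2=0$ case you identify. Your explicit check that $\mathbb{E}_{\mathbb Q}[\max_{0\le t\le T}S_t]<\infty$ is a welcome detail that the paper leaves implicit, but it does not change the argument.
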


It is open about the bias or the asymptotic unbiasedness of $\hat\theta_d$ when $f$ is not continuous. Nevertheless, we still use $\hat\theta_d$ to estimate $\theta$ in practice.

Note that $\mathbb{E}_{\mathbb Q}[\hat\theta_d]$ can be expressed as
\begin{equation}\label{functionofg}
\mathbb{E}_{\mathbb Q}[\hat\theta_d]=\mathbb{E}_{\mathbb Q}[g({\bm X})],
\end{equation}
 where ${\bm X}=(X_1,\cdots,X_d)'$ is a $d$-dimensional standard normal random vector under the risk-neutral probability measure $\mathbb{Q}$ (see the next section). Thus the CQMC method can be applied to calculate the Greeks.

 We defined $G({\bm Z})={\mathbb E}_{\mathbb Q}[g({\bm X})|{\bm Z}]$  the conditional Malliavin (CMV) estimate of the Greeks in Section \ref{Sec:CQMC}. If the CMV estimate $G({\bm Z})$ is analytically available, we can use QMC method to estimate the Greeks as follows 
\begin{equation}
\mathbb{E}_{\mathbb Q} [g({\bm X})]=\mathbb{E}_{\mathbb Q} [G({\bm Z})] \approx \frac{1}{n}\sum_{i=1}^n G(\Phi^{-1}({\bm u}_i)),
\label{4-6}
\end{equation}
where ${\bm u}_1,\cdots,{\bm u}_n$ are $(d-1)$-dimensional RQMC points, and $\Phi$ is the distribution function of ${\bm Z}$. This is QMC-based CMV method. We call it QMC-CMV method for short.

Note that we only consider the continuous-time Asian option Greeks in this paper. For the discrete-time Asian option Greeks, the Malliavin estimate degenerates into the likelihood ratio (LR) estimate, and there are many powerful methods based on the LR method and the pathwise (PW) method combined with CQMC method to improve the smoothness. We refer to \cite{Xiao18,Zhang20} for more details.

\section{QMC-CMV method: implementation and examples}
\label{Sub_ill}
 In this section, we show the implementation of the QMC-CMV method, and give the CMV estimates for the Greeks of three common Asian option Greeks (i.e., binary Asian options, Asian call options and up-and-out Asian call options) as illustrations.
 
By discretization, we get the price of  underlying asset $S_{j}$ at time  $t_j$. To separate variables, let 
 \begin{equation}
\begin{aligned}
\tilde{S}_j&=S_0\exp\left(\omega(t_j-t_1)+\sigma(\tilde{W}_{t_j}-\tilde{W}_{t_1})\right)=S_0\exp\left(  \omega(t_j-t_1)+\sigma \bar{W}_{t_j-t_1}  \right),
\end{aligned}
\label{BS-tildeSJ}
\end{equation}
 where 
  \begin{equation*}
\bar{W}_t:=\tilde{W}_{t+t_1}-\tilde{W}_{t_1}.
\end{equation*}
Obviously, $\bar{W}=\{\bar{W}_t,{\cal F}_{t+t_1}\}_{0\le t\le T-t_1}$ is also a Brownian motion under $\mathbb Q$. Comparing (\ref{BS-SJ}) with (\ref{BS-tildeSJ}), we have
 \begin{equation}
\begin{aligned}
S_j=\tilde{S}_j\exp\left( \omega t_1+\sigma\tilde{W}_{t_1}  \right).
\end{aligned}
\end{equation}
Let $\bar{\bm W}=(\bar{W}_{t_2-t_1},\cdots,\bar{W}_{t_d-t_1})'$. Because $\tilde{W}_{t_1}$ and $\bar{\bm W}$ are independent and normally distributed, we can generate them by
 \begin{equation}
\begin{aligned}
\tilde{W}_{t_1}=\sqrt{t_1}X_1,\quad X_1\sim N(0,1),
\end{aligned}
\end{equation}
and
 \begin{equation}
\begin{aligned}
\bar{\bm W}={\bm AZ},\quad {\bm Z}\sim N({\bm 0}_{d-1},{\bm I}_{d-1}),
\end{aligned}
\end{equation}
where ${\bm Z}=(X_2,\cdots,X_d)'$, ${\bm 0}_{d-1}$ is a $(d-1)$-dimensional zero column vector, ${\bm I}_{d-1}$ is a $(d-1)\times (d-1)$ identify matrix, and ${\bm A}$ is a $(d-1)\times(d-1)$ matrix satisfying ${\bm AA}'={\bm \Sigma}$ with
 \begin{equation*}
\begin{aligned}
{\bm \Sigma}=
\left(
      \begin{array}{cccc}
        t_2-t_1 & t_2-t_1 & \cdots  & t_2-t_1 \\
        t_2-t_1 & t_3-t_1 & \cdots  & t_3-t_1 \\
        \vdots  & \vdots  & \ddots  & \vdots  \\
        t_2-t_1 & t_3-t_1 & \cdots  & t_d-t_1
      \end{array}
      \right)         .
\end{aligned}
\end{equation*}
Different choice of ${\bm A}$ may lead to different efficiency of QMC-based method. Since the Greek estimates can be viewed as a function  of ${\bm X}$, namely, ${\mathbb E}_{\mathbb Q}[\hat\theta_d]={\mathbb E}_{\mathbb Q}[g({\bm X})]$, and we have shown that the CMV estimate $G({\bm Z})={\mathbb E}_{\mathbb Q}[g({\bm X})|{\bm Z}]$ is continuous for common options, some PGMs (such as GPCA) can be used to reduce the effective dimension of $G({\bm Z})$.

Let ${\bm X}=(X_1,{\bm Z}')'$. Its components are independently and normally distributed. Let ${\bm S}=(S_1,\cdots,S_d)$. We have
\begin{equation}
\tilde{S}_j=S_0\exp\left(\omega(t_j-t_1)+\sigma {\bm A}_{j-1}{\bm Z}\right),
\end{equation}
and
\begin{equation}
\begin{aligned}                                               
{S}_j&=S_0\exp\left(\omega t_j+\sigma \sqrt{t_1}X_1+\sigma {\bm A}_{j-1}{\bm Z}\right)=\exp\left(\omega t_1+\sigma\sqrt{t_1}X_1\right)\tilde{S}_j,
\end{aligned}
\label{SJwithTildeSJ}
\end{equation}
where ${\bm A}_0={\bm 0}_{d-1}'$ is a zero row vector and ${\bm A}_j$ is the $j$th row of $\bm A$. The variable $S_j$ is a function of $\bm X$, with the component $ X_1$ and $\bm Z$ being separated ($\tilde S_j$ is a function of $\bm Z$).

By the above method, we achieve the goal of variables separation. In the expression of the Greek estimates ${\mathbb E}_{\mathbb Q}[g({\bm X})]$, the variable separation condition is usually satisfied for $g$. In some cases, we can calculate the analytical expression of $G({\bm Z})$.

 \begin{example}
\label{ex1}
As a warm-up example, we consider the binary Asian option whose payoff function is given by
\begin{equation*}
f(\bar{S}_T)={\bm 1}\{\bar{S}_T>K\}, 
\end{equation*}
where $K$ is the strike price. By Corolarry \ref{L1C} and (\ref{greek_f2}), the simulation estimate of $delta$ is given by
\begin{equation*}
\mathbb{E}_{\mathbb Q}[\hat \theta_d]=\mathbb{E}_{\mathbb Q}\left[\frac{2e^{-rT}}{S_0 \sigma^2}\left(   \frac{S_d-S_0}{T  S_A}-\omega \right)  {\bm 1}\{{S}_A>K\}  \right].
\end{equation*}

Since an indicator function is involved in the integrand, a direct use of QMC method could lead to large loss of efficiency. Based on (\ref{SJwithTildeSJ}), we have
\begin{equation*}
\begin{aligned}
S_A&=\exp\left(\omega t_1+\sigma\sqrt{t_1}X_1\right)\frac{1}{d}\sum_{j=1}^d\tilde{S}_j=\exp\left(\omega t_1+\sigma\sqrt{t_1}X_1\right)\tilde{S}_A,
\end{aligned}
\end{equation*}
where 
\begin{equation*}
\begin{aligned}
\tilde S_A:=\frac{1}{d}\sum_{j=1}^d\tilde{S}_j
=\frac{1}{d}\sum_{j=1}^dS_0\exp\left(\omega(t_j-t_1)+\sigma{\bm A}_{j-1}{\bm Z}\right).
\end{aligned}
\end{equation*}
Thus we have
\begin{equation*}
\begin{aligned}
\{S_A>K\}=\{X_1>\psi_d\},
\end{aligned}
\end{equation*}
where
\begin{equation*}
\begin{aligned}
\psi_d:=\frac{\ln K-\ln \tilde{S}_A-\omega t_1}{\sigma\sqrt{t_1}}
\end{aligned}
\end{equation*}
is a function of ${\bm Z}$. So we achieve variable separation, and the $delta$ can be written by
\begin{equation*}
\mathbb{E}_{\mathbb Q}[\hat \theta_d]= \mathbb{E}_{\mathbb Q}\left[\frac{2e^{-rT}}{S_0 \sigma^2}\left(   \frac{S_d-S_0}{T S_A}-\omega \right)  {\bm 1}\{X_1>\psi_d\}  \right]=:\mathbb{E}_{\mathbb Q}\left[g({\bm X})\right],
\end{equation*}
where $g({\bm x})$ has the form (\ref{4-2}).

We denote the density and distribution function of one-dimensional normal distribution by $\phi$ and $\Phi$, respectively. To use CMV method (\ref{4-5}), we need to calculate the analytical expression of $\mathbb{E}_{\mathbb{Q}}\left[  g({\bm X})|{\bm Z} \right]$. This can be done as follows
\begin{equation*}
\begin{aligned}
&\mathbb{E}_{\mathbb{Q}}\left[  g({\bm X})|{\bm Z}\right]\\
=&\int_{\psi_d}^{\infty} \frac{2e^{-rT}}{S_0 \sigma^2}\left(   \frac{S_d-S_0}{T S_A}-\omega \right)\phi(x_1)\mathrm{d}x_1\\
=&\frac{2e^{-rT}\tilde{S}_d}{TS_0\tilde S_A\sigma^2}\Phi(-\psi_d)-\frac{2e^{-rT-rt_1+\sigma^2t_1}}{T\tilde S_A\sigma^2}\Phi(-\sigma\sqrt{t_1}-\psi_d)-\frac{2e^{-rT}\omega}{S_0\sigma^2}\Phi(-\psi_d)\\
=&:G({\bm Z}).
\end{aligned}
\end{equation*}
Then $G({\bm Z})$ is the CMV estimate of $delta$ for the binary Asian option. Clearly, $G({\bm Z})$ has good smoothness.

Similarly, we can calculate CMV estimates of $gamma$ and $vega$ for the binary Asian option. To sum up, we give the following theorem.

\begin{theorem}\label{estimate1}
For the binary Asian option, the CMV estimates of $delta$, $gamma$ and $vega$ are given by
\begin{align*}
 &delta: G_{\Delta,1}(K;{\bm Z})=\left(\frac{2e^{-rT}\tilde{S}_d}{TS_0\tilde S_A\sigma^2} - \frac{2e^{-rT}\omega}{S_0\sigma^2} \right)\Phi_1-\frac{2e^{-rT-rt_1+\sigma^2t_1}}{T\tilde S_A\sigma^2}\Phi_2;\\
&gamma: G_{\Gamma,1}(K;{\bm Z})=\frac{4e^{-rT}}{\sigma^4S_0^2T^2} \Bigg[      \left(\frac{\tilde S_d^2}{\tilde S_A^2}+\omega rT^2-\frac{2rT\tilde S_d}{\tilde S_A}\right)\Phi_1 +\frac{S_0^2}{\tilde{S}_A^2}e^{-2rt_1+3\sigma^2t_1}\Phi_3   
 +\left( \frac{2\omega T S_0}{\tilde S_A}-  \frac{2S_0\tilde S_d}{\tilde S_A^2}\right)  e^{-rt_1+\sigma^2 t_1}\Phi_2      \Bigg] ;\\
 & vega: G_{\vartheta,1}(K;{\bm Z})=\Phi_1\Bigg( \frac{2e^{-rT}(\tilde S_d-(r-\sigma^2)T\tilde S_A)}{\sigma^2T\tilde S_A^2 d}  \sum_{j=1}^d\tilde{S}_j({\bm A}_{j-1}{\bm Z}-\sigma t_j)      -\frac{2e^{-rT}}{d^2\tilde S_A^2}\sum_{i=1}^d\sum_{j=i}^d\tilde S_i\tilde S_j({\bm A}_{j-1}{\bm Z}- \sigma t_j)  - \frac{e^{-rT}}{\sigma} \Bigg)  \\   
&\hspace{8em}  \  +\Phi_2 \Bigg( \frac{2e^{-rT}S_0\sqrt{t_1}}{\sigma^2T\tilde S_A}   \sigma\sqrt{t_1}e^{\sigma^2t_1/2-\omega t_1}-\frac{2S_0e^{-rT}}{\sigma^2Td\tilde{S}_A^2} e^{-rt_1+\sigma^2t_1} \sum_{j=1}^d\tilde{S}_j({\bm A}_{j-1}{\bm Z}-\sigma t_j)\Bigg)\\
&\hspace{8em} \ + \frac{1}{\sqrt{2\pi}}e^{-\psi_d^2/2}\Bigg(\frac{2e^{-rT}\tilde S_d\sqrt{t_1}}{\sigma^2 T\tilde S_A}
   -\frac{2e^{-rT}}{\sigma^2}\sqrt{t_1}(r-\sigma^2) -\frac{2e^{-rT}\sqrt{t_1}}{\tilde S_A^2 d^2}\sum_{i=1}^d\sum_{j=i}^d \tilde S_i \tilde S_j \Bigg) \\ 
&\hspace{8em} \  -\frac{2e^{-rT}S_0\sqrt{t_1}}{\sigma^2T\tilde S_A} \frac{1}{\sqrt{2\pi}}e^{\sigma^2t_1 /2-\omega t_1}  e^{-(\psi_d+\sigma\sqrt{t_1})^2/2} ,
\end{align*}
where $\Phi_1:=\Phi(-\psi_d)$, $\Phi_2:=\Phi(-\sigma\sqrt{t_1}-\psi_d)$ and $\Phi_3:=\Phi(-2\sigma\sqrt{t_1}-\psi_d)$.
\end{theorem}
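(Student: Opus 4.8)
The starting point is Corollary~\ref{L1C}: for $m=1$ it expresses each of $delta$, $gamma$, $vega$ as $\mathbb{E}_{\mathbb{Q}}[e^{-rT}f(\bar S_T)\cdot weight]$, where the weight is an explicit expression in $\bar S_T$, $S_T$, $\int_0^T S_tg_t\,\mathrm{d}t$ and $\int_0^TS_u\int_u^TS_tg_t\,\mathrm{d}t\,\mathrm{d}u$ with $g_t=\tilde W_t-\sigma t$; replacing these four quantities by their discretizations gives the simulation estimate $\hat\theta_d$ of~(\ref{greek_f2}). For the binary payoff $f(\bar S_T)={\bm 1}\{\bar S_T>K\}$ one has $f(S_A)={\bm 1}\{S_A>K\}$, so $\mathbb{E}_{\mathbb{Q}}[\hat\theta_d]=\mathbb{E}_{\mathbb{Q}}[g({\bm X})]$ with $g$ of the separated form~(\ref{4-2}). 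The plan is first to insert the variable-separation representation~(\ref{SJwithTildeSJ}), namely $S_j=e^{\omega t_1+\sigma\sqrt{t_1}X_1}\tilde S_j$ and $\tilde W_{t_j}=\sqrt{t_1}X_1+{\bm A}_{j-1}{\bm Z}$, into $\hat\theta_d$; since $\sigma\sqrt{t_1}>0$, the map $X_1\mapsto S_A=e^{\omega t_1+\sigma\sqrt{t_1}X_1}\tilde S_A$ is strictly increasing, so $\{S_A>K\}=\{X_1>\psi_d\}$ with $\psi_d=(\ln K-\ln\tilde S_A-\omega t_1)/(\sigma\sqrt{t_1})$, exactly as in Example~\ref{ex1}.

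The core of the argument is the conditional expectation $G({\bm Z})=\mathbb{E}_{\mathbb{Q}}[g({\bm X})\mid{\bm Z}]=\int_{\psi_d}^{\infty}(\text{weight as a function of }x_1)\,\phi(x_1)\,\mathrm{d}x_1$. After the substitution above, every factor in the weight is a product or ratio of the $S_j$'s together with affine functions of the $\tilde W_{t_j}$'s; using the cancellations $S_j/S_A=\tilde S_j/\tilde S_A$, $S_d/S_A^2=\tilde S_d\,e^{-\omega t_1-\sigma\sqrt{t_1}x_1}/\tilde S_A^2$, and so on, the integrand reduces to a finite linear combination of terms $x_1^{p}\,e^{-j\sigma\sqrt{t_1}x_1}$ with $p\in\{0,1\}$ and $j$ a small nonnegative integer, the coefficients depending only on ${\bm Z}$ (and of the type covered by Lemma~\ref{L3}). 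I would then invoke the two elementary Gaussian identities
\begin{equation*}
\int_{\psi_d}^{\infty}e^{-j\sigma\sqrt{t_1}\,x_1}\phi(x_1)\,\mathrm{d}x_1=e^{j^2\sigma^2 t_1/2}\,\Phi\!\left(-\psi_d-j\sigma\sqrt{t_1}\right),
\end{equation*}
\begin{equation*}
\int_{\psi_d}^{\infty}x_1\,e^{-j\sigma\sqrt{t_1}\,x_1}\phi(x_1)\,\mathrm{d}x_1=e^{j^2\sigma^2 t_1/2}\Big[\phi\!\left(\psi_d+j\sigma\sqrt{t_1}\right)-j\sigma\sqrt{t_1}\,\Phi\!\left(-\psi_d-j\sigma\sqrt{t_1}\right)\Big],
\end{equation*}
both of which follow from completing the square $e^{cx}\phi(x)=e^{c^2/2}\phi(x-c)$ together with $\int_a^{\infty}\phi(x)\,\mathrm{d}x=\Phi(-a)$ and $\int_a^{\infty}y\phi(y)\,\mathrm{d}y=\phi(a)$. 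Specializing $j\in\{0,1,2\}$ produces $\Phi_1,\Phi_2,\Phi_3$ and, for $vega$, the density factors $\tfrac{1}{\sqrt{2\pi}}e^{-\psi_d^2/2}$ and $e^{-(\psi_d+\sigma\sqrt{t_1})^2/2}$; absorbing the residual exponentials $e^{\sigma^2 t_1/2}$, $e^{2\sigma^2 t_1}$ into the coefficient powers of $e^{-\omega t_1}=e^{-rt_1+\sigma^2 t_1/2}$ turns them into the $e^{-rt_1+\sigma^2 t_1}$ and $e^{-2rt_1+3\sigma^2 t_1}$ appearing in the statement. The $delta$ computation is already carried out in Example~\ref{ex1}; for $gamma$ one feeds in the weight $\tfrac{4e^{-rT}}{\sigma^4S_0^2T^2\bar S_T^2}\big(S_T^2-2S_TS_0+S_0^2+\omega rT^2\bar S_T^2-2rTS_T\bar S_T+2\omega TS_0\bar S_T\big)$ and only the first identity with $j\in\{0,1,2\}$ is needed; for $vega$ one additionally needs the second identity with $j\in\{0,1\}$, because $\tilde W_{t_j}-\sigma t_j=\sqrt{t_1}X_1+{\bm A}_{j-1}{\bm Z}-\sigma t_j$ contributes a genuine $x_1$ factor.

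The only real difficulty is the algebraic bookkeeping, which is heaviest for $vega$: one must keep track of which products $S_iS_j$, $S_dS_j$, $S_A^2,\ldots$ still carry an $e^{\pm\sigma\sqrt{t_1}x_1}$ factor after the ratio cancellations, split off the pure-$x_1$ part of each $\tilde W_{t_j}-\sigma t_j$, evaluate the resulting one-dimensional integrals, and regroup the $\Phi$- and $\phi$-coefficients so as to match the stated closed forms. There is no conceptual obstacle beyond the monotonicity observation that yields $\{S_A>K\}=\{X_1>\psi_d\}$. Finally, the good smoothness of $G$ noted after the theorem is immediate from the closed forms (each of $\Phi(-\psi_d)$, $\Phi_i$ and the density factors is $C^{\infty}$ in ${\bm Z}$), or, if a structural argument is preferred, follows from Theorem~\ref{THM:CMC} together with Lemma~\ref{L3}.
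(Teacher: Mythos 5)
Your proposal is correct and follows essentially the same route as the paper: the paper's justification is precisely the variable-separation argument of Example~\ref{ex1} (yielding $\{S_A>K\}=\{X_1>\psi_d\}$ and the one-dimensional conditional integral), carried out explicitly for $delta$ and declared ``similar'' for $gamma$ and $vega$. Your two Gaussian identities, with $j\in\{0,1,2\}$ and the extra $x_1$-factor for $vega$, are exactly the computations the paper leaves implicit, and your bookkeeping of the exponential factors $e^{-rt_1+\sigma^2 t_1}$, $e^{-2rt_1+3\sigma^2 t_1}$ reproduces the stated closed forms.
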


We can easily see that all CMV estimates given above are infinitely times differentiable, which are QMC-friendly. On the other hand, based on Corollary \ref{L1C}, the original Malliavin estimates of the Greeks involve discontinuities. Thus we may expect that CMV method could provide better results than the original Malliavin method in QMC. 

As an example, for $delta$, the QMC-CMV estimate is given by
\begin{equation*}
\begin{aligned}
delta \approx   \frac{1}{n}\sum_{i=1}^n \Bigg[&   \frac{2e^{-rT}\tilde{S}_d(\Phi^{-1}({\bm u_i}))}  {TS_0\tilde S_A(\Phi^{-1}({\bm u_i}))\sigma^2}\Phi(-\psi_d(\Phi^{-1}({\bm u_i})))-\frac{2e^{-rT-rt_1+\sigma^2t_1}}{T\tilde S_A(\Phi^{-1}({\bm u_i}))\sigma^2}  \Phi(-\sigma\sqrt{t_1}-\psi_d(\Phi^{-1}({\bm u_i})))\\
& -\frac{2e^{-rT}\omega}{S_0\sigma^2}\Phi(-\psi_d(\Phi^{-1}({\bm u_i})))   \Bigg],
\end{aligned}
\end{equation*}
where ${\bm u}_1,\cdots,{\bm u}_n$ are QMC points. 
\end{example}

\begin{example}
\label{ex2}
Consider the Asian call option with the following payoff function
\begin{equation*}
f(\bar{S}_T)= (\bar{S}_T-K)^+.
\end{equation*}
Using the notations and imitating the steps in Example \ref{ex1}, we can obtain the CMV estimates for the Asian call option Greeks in the following theorem.

\begin{theorem}\label{estimate2}
For the Asian call option, the CMV estimates for $delta$, $gamma$ and $vega$ are given by
\begin{align*}
 &delta: G_{\Delta,2}(K;{\bm Z})=-\frac{2e^{-rT}}{T \sigma^2}\Phi_1+\left(\frac{2e^{-rT}\tilde{S}_d}{TS_0 \sigma^2}e^{rt_1}-  \frac{2e^{-rT}\omega \tilde{S}_A}{S_0\sigma^2}e^{rt_1} \right)    \tilde\Phi_2
 -KG_{\Delta,1}(K;{\bm Z});\\
& gamma: G_{\Gamma,2}(K;{\bm Z})=\left(\frac{8\omega e^{-rT}}{\sigma^4 S_0T}-\frac{8e^{-rT}\tilde S_d}{\sigma^4S_0T^2\tilde S_A}\right)\Phi_1+\frac{4e^{-rT}}{\sigma^4T^2\tilde S_A}e^{-rt_1+\sigma^2t_1}\Phi_2\\
 &\hspace{9em}\ \ +  \Bigg( \frac{4e^{-rT}\tilde S_d^2}{\sigma^4S_0^2T^2\tilde S_A}e^{rt_1}+ \frac{4e^{-rT}\omega r\tilde S_A}{\sigma^4 S_0^2}e^{rt_1}-\frac{8r\tilde S_de^{-rT}}{\sigma^4S_0^2T}e^{rt_1}\Bigg)\tilde\Phi_2 -KG_{\Gamma,1}(K;{\bm Z});\\
& vega: G_{\vartheta,2}(K;{\bm Z})=-\frac{2e^{-rT}S_0}{\sigma^2T\tilde S_A d}\sum_{j=1}^d\tilde S_j({\bm A}_{j-1}{\bm Z}-\sigma t_{j})\Phi_1-KG_{\vartheta,1}(K;{\bm Z}) -\frac{2e^{-rT}S_0\sqrt{t_1}}{\sigma^2T}\frac{1}{\sqrt{2\pi}}e^{-\psi_d^2/2}  \\
&\hspace{2em}+\Bigg[\frac{2e^{-rT}\tilde S_d}{\sigma^2T\tilde S_A d}\sum_{j=1}^d \tilde S_j({\bm A}_{j-1}{\bm Z}  -\sigma t_j)e^{rt_1}+\Bigg( \frac{2e^{-rT}\tilde S_d\sqrt{t_1}}{\sigma^2 T}-\frac{2e^{-rT}\tilde S_A}{\sigma^2}\sqrt{t_1}(r-\sigma^2)    \\
&\hspace{2em}-\frac{2e^{-rT}\sqrt{t_1}}{\tilde S_A d^2}\sum_{i=1}^d\sum_{j=i}^d\tilde S_i\tilde S_j \Bigg) \sigma \sqrt{t_1} e^{\sigma^2 t/2+\omega t} - \frac{2e^{-rT}(r-\sigma^2)}{\sigma^2d}\sum_{j=1}^d\tilde S_j e^{rt_1}({\bm A}_{j-1}{\bm Z} -\sigma t_j) \\
&\hspace{2em}   -  \frac{2e^{-rT}}{d^2\tilde S_A}\sum_{i=1}^d\sum_{j=i}^d \tilde S_i\tilde S_j ({\bm A}_{j-1}{\bm Z}-\sigma t_j)e^{rt_1}   - \frac{\tilde S_Ae^{-rT}}{\sigma}e^{rt_1}   \Bigg]\tilde\Phi_2 +\Bigg( \frac{2e^{-rT}\tilde S_d\sqrt{t_1}}{\sigma^2 T}-\frac{2e^{-rT}\tilde S_A}{\sigma^2}\sqrt{t_1}(r-\sigma^2) \\
&\hspace{2em}- \frac{2e^{-rT}\sqrt{t_1}}{\tilde S_A d^2}\sum_{i=1}^d\sum_{j=i}^d\tilde S_i\tilde S_j \Bigg)     \frac{1}{\sqrt{2\pi}}  e^{\sigma^2t/2+\omega t-(\psi_d-\sigma \sqrt{t_1})^2/2},
\end{align*}
where $\Phi_1:=\Phi(-\psi_d)$, $\Phi_2:=\Phi(-\sigma\sqrt{t_1}-\psi_d)$, and $\tilde\Phi_2:=\Phi(\sigma\sqrt{t_1}-\psi_d)$; and $G_{\Delta,1}$, $G_{\Gamma,1}$ and $G_{\vartheta,1}$ are the CMV estimates for the binary Asian option Greeks in Theorem \ref{estimate1}. 
\end{theorem}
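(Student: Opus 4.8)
The plan is to follow the recipe of Example~\ref{ex1} essentially verbatim, the one new ingredient being the decomposition of the call payoff
\begin{equation*}
(S_A-K)^+=S_A\,{\bm 1}\{S_A>K\}-K\,{\bm 1}\{S_A>K\},
\end{equation*}
which splits the conditional expectation into a piece that is $-K$ times one of the binary-option integrands of Theorem~\ref{estimate1} and a piece that must be integrated by hand. The first piece then reproduces the $-KG_{\Delta,1}$, $-KG_{\Gamma,1}$, $-KG_{\vartheta,1}$ terms in the statement, and all that remains is to evaluate the conditional expectation of the $S_A\,{\bm 1}\{S_A>K\}$ part against the Malliavin weights of Corollary~\ref{L1C}.

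First I would substitute $f(S_A)=(S_A-K)^+$ into the three weight formulae of Corollary~\ref{L1C} through~(\ref{greek_f2}) to write down the simulation estimate $\hat\theta_d$ for each of $delta$, $gamma$, $vega$. Exactly as in Example~\ref{ex1}, the identity $S_j=e^{\omega t_1+\sigma\sqrt{t_1}X_1}\tilde S_j$ from~(\ref{SJwithTildeSJ}) gives $S_A=e^{\omega t_1+\sigma\sqrt{t_1}X_1}\tilde S_A$, hence $\{S_A>K\}=\{X_1>\psi_d\}$ with the same $\psi_d=(\ln K-\ln\tilde S_A-\omega t_1)/(\sigma\sqrt{t_1})$, a function of ${\bm Z}$ alone. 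Writing $(S_A-K)^+=(S_A-K)\,{\bm 1}\{X_1>\psi_d\}$ and multiplying out, the integrand $g({\bm x})$ takes the variable-separation form~(\ref{4-2}); after expansion its continuous part $h$ is a finite sum of monomials $A_k\,x_1^{k}e^{Bx_1}$ with $k\le 1$ and with $A_k,B$ functions of ${\bm Z}$ bounded on compacts, so Lemma~\ref{L3} and Theorem~\ref{THM:CMC} guarantee that $G({\bm Z})=\mathbb{E}_{\mathbb Q}[g({\bm X})\mid{\bm Z}]$ is well defined and continuous; the formula to be proved is simply its explicit evaluation.

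The core step is the one-dimensional Gaussian integration $G({\bm Z})=\int_{\psi_d}^{\infty}h({\bm x})\phi(x_1)\,\mathrm{d}x_1$, which needs only the elementary identities
\begin{equation*}
\int_{\psi_d}^{\infty}e^{cx_1}\phi(x_1)\,\mathrm{d}x_1=e^{c^2/2}\,\Phi(c-\psi_d),\qquad
\int_{\psi_d}^{\infty}x_1\,e^{cx_1}\phi(x_1)\,\mathrm{d}x_1=e^{c^2/2}\bigl(\phi(\psi_d-c)+c\,\Phi(c-\psi_d)\bigr),
\end{equation*}
together with the relation $\omega+\tfrac12\sigma^2=r$, which collapses exponents such as $\omega t_1+\tfrac12\sigma^2 t_1$ into $rt_1$ and makes the arguments $c-\psi_d$ produce exactly $\Phi_1=\Phi(-\psi_d)$, $\Phi_2=\Phi(-\sigma\sqrt{t_1}-\psi_d)$ and $\tilde\Phi_2=\Phi(\sigma\sqrt{t_1}-\psi_d)$. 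For $delta$ the weight is built from $1$, $S_d/S_A$ and $S_d$, so after the extra factor $S_A$ one meets only ${\bm 1}$, $S_d$, $S_A$ under the conditional expectation, giving the three displayed terms of $G_{\Delta,2}$. For $gamma$, using $S_d^{m_1}S_A^{-m_2}=e^{(m_1-m_2)(\omega t_1+\sigma\sqrt{t_1}X_1)}\tilde S_d^{m_1}\tilde S_A^{-m_2}$ reduces every summand to the first identity with $c\in\{-\sigma\sqrt{t_1},0,\sigma\sqrt{t_1}\}$, which produces the $e^{-rt_1+\sigma^2 t_1}$ and $e^{rt_1}$ factors; in both cases the leftover $-K\,{\bm 1}\{X_1>\psi_d\}$ part is, monomial by monomial, $-K$ times the integrand whose conditional expectation is $G_{\Delta,1}$, resp.\ $G_{\Gamma,1}$, in Theorem~\ref{estimate1}.

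I expect $delta$ and $gamma$ to be essentially mechanical and the real obstacle to be the bookkeeping in the $vega$ case. There the weight contains $\int_0^T S_tg_t\,\mathrm{d}t$ and $\int_0^T S_u\int_u^T S_tg_t\,\mathrm{d}t\,\mathrm{d}u$ with $g_t=\tilde W_t-\sigma t$; in discretised form these are sums of $\tilde S_j({\bm A}_{j-1}{\bm Z}-\sigma t_j)$ and of $\sqrt{t_1}\,X_1\,\tilde S_j$, so one genuinely needs the second Gaussian identity, and the $\phi$-terms it produces are precisely what give the $\tfrac1{\sqrt{2\pi}}e^{-\psi_d^2/2}$ and $\tfrac1{\sqrt{2\pi}}e^{\sigma^2 t/2+\omega t-(\psi_d-\sigma\sqrt{t_1})^2/2}$ contributions in $G_{\vartheta,2}$. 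The care required is to split the double sum $\sum_{i}\sum_{j\ge i}\tilde S_i\tilde S_j g_{t_j}$ into its $X_1$-free and $X_1$-linear parts, track which exponential factor each monomial $S_d^{m_1}S_A^{-m_2}$ or $S_d^{m_1}S_A^{-m_2}X_1$ carries so that the constants $e^{rt_1}$, $e^{-rt_1+\sigma^2 t_1}$ and $e^{\sigma^2 t/2+\omega t}$ come out right, and then reassemble the roughly one dozen resulting summands. I would organise this by recording $\mathbb{E}_{\mathbb Q}[\,S_d^{m_1}S_A^{-m_2}\,{\bm 1}\{X_1>\psi_d\}\mid{\bm Z}]$, $\mathbb{E}_{\mathbb Q}[\,S_d^{m_1}S_A^{-m_2}X_1\,{\bm 1}\{X_1>\psi_d\}\mid{\bm Z}]$ and $\mathbb{E}_{\mathbb Q}[\,S_d^{m_1}S_A^{-m_2}\tilde S_j\,{\bm 1}\{X_1>\psi_d\}\mid{\bm Z}]$ once and for all from the two identities above, and then substituting; the computation is long but presents no conceptual difficulty.
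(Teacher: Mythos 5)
Your proposal is correct and follows essentially the same route as the paper, whose ``proof'' of Theorem \ref{estimate2} is simply the instruction to imitate Example \ref{ex1}: separate $X_1$ via $S_j=e^{\omega t_1+\sigma\sqrt{t_1}X_1}\tilde S_j$, split $(S_A-K)^+=S_A{\bm 1}\{X_1>\psi_d\}-K{\bm 1}\{X_1>\psi_d\}$ (which is exactly what produces the $-KG_{\Delta,1}$, $-KG_{\Gamma,1}$, $-KG_{\vartheta,1}$ terms), and evaluate the remaining one-dimensional Gaussian integrals with the two standard identities, using $\omega+\tfrac{1}{2}\sigma^2=r$ to collapse the exponents into $e^{rt_1}$ and $e^{-rt_1+\sigma^2t_1}$. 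Your outline of the $delta$ and $gamma$ cases reproduces the stated coefficients, and your treatment of $vega$ (splitting $\tilde W_{t_j}=\sqrt{t_1}X_1+{\bm A}_{j-1}{\bm Z}$ and invoking the second identity for the $X_1$-linear parts, which generates the $\phi$-type terms) is precisely the bookkeeping the paper leaves implicit.
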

 
 Note that in this case, the CMV estimates are the asymptotic unbiased estimates of the Greeks with respect to $d$ according to Theorem \ref{THM:bias}. We can easily see that all estimates given above are infinitely times differentiable, which are QMC-friendly as well.
\end{example}

\begin{example}
\label{ex3}
Consider the up-and-out Asian call option with the following payoff function
\begin{equation*}
f(\bar{S}_T)= (\bar{S}_T-K)^+{\bm 1}\{  \bar S_T\le H  \}.
\end{equation*}
Using the notations and imitating the steps in Example \ref{ex1}, we can obtain the CMV estimates for  the up-and-out Asian call option Greeks in the following theorem.

\begin{theorem}\label{estimate3}
For the up-and-out Asian call option, the CMV estimates for $delta$, $gamma$ and $vega$ are given by
\begin{align*}
& delta: G_{\Delta,3}(K,H;{\bm Z})=  G_{\Delta,2}(K;{\bm Z})- (G_{\Delta,2}(H;{\bm Z})+(H-K)G_{\Delta,1}(H;{\bm Z}));\\
&gamma: G_{\Gamma,3}(K,H;{\bm Z})=   G_{\Gamma,2}(K;{\bm Z})- (G_{\Gamma,2}(H;{\bm Z})+(H-K)G_{\Gamma,1}(H;{\bm Z}))  ;\\
& vega: G_{\vartheta,3}(K,H;{\bm Z})= G_{\vartheta,2}(K;{\bm Z})- (G_{\vartheta,2}(H;{\bm Z})+(H-K)G_{\vartheta,1}(H;{\bm Z})) ,
\end{align*}
where $G_{\Delta,1}$, $G_{\Gamma,1}$ and $G_{\vartheta,1}$ are the CMV estimates for the  binary Asian option Greeks in Theorem \ref{estimate1}, and $G_{\Delta,2}$, $G_{\Gamma,2}$ and $G_{\vartheta,2}$ are the CMV estimates for the Greeks in Theorem \ref{estimate2}. 
\end{theorem}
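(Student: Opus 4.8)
The plan is to reduce the up-and-out Asian call option to the two option types already handled in Theorems \ref{estimate1} and \ref{estimate2}, by a payoff decomposition followed by conditioning. First I would observe the elementary identity for the payoff function: for $\bar S_T$ in the relevant range,
\begin{equation*}
(\bar S_T-K)^+{\bm 1}\{\bar S_T\le H\}=(\bar S_T-K)^+-(\bar S_T-K)^+{\bm 1}\{\bar S_T> H\},
\end{equation*}
and then rewrite the truncated term using $(\bar S_T-K)^+{\bm 1}\{\bar S_T>H\}=(\bar S_T-H)^+{\bm 1}\{\bar S_T>H\}+(H-K){\bm 1}\{\bar S_T>H\}$, since on $\{\bar S_T>H\}$ we have $\bar S_T-K=(\bar S_T-H)+(H-K)$ and $(\bar S_T-H)^+=(\bar S_T-H)$ there. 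Hence
\begin{equation*}
f(\bar S_T)=(\bar S_T-K)^+-(\bar S_T-H)^+-(H-K){\bm 1}\{\bar S_T>H\},
\end{equation*}
so the payoff is a linear combination of an Asian call payoff with strike $K$, an Asian call payoff with strike $H$, and a binary Asian payoff with strike $H$.

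Next I would carry this decomposition through the Malliavin--Greek representation. From the formulae in Corollary \ref{L1C} (equivalently \eqref{greek_f}), the quantity $\hat\theta_d$ whose conditional expectation defines the CMV estimate is of the form $e^{-rT}f(S_A)\cdot weight(\cdots)$, where $weight$ does not depend on the payoff function $f$. Therefore $\hat\theta_d$ is linear in $f$, and for the up-and-out payoff it splits as the same linear combination of the $\hat\theta_d$'s for the Asian call with strike $K$, the Asian call with strike $H$, and the binary Asian with strike $H$. Taking the conditional expectation ${\mathbb E}_{\mathbb Q}[\,\cdot\,|{\bm Z}]$ and using its linearity gives
\begin{equation*}
G_{\cdot,3}(K,H;{\bm Z})=G_{\cdot,2}(K;{\bm Z})-G_{\cdot,2}(H;{\bm Z})-(H-K)G_{\cdot,1}(H;{\bm Z})
\end{equation*}
for each Greek ($delta$, $gamma$, $vega$), which is exactly the claimed formula. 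The binary strike-$H$ conditional estimates $G_{\cdot,1}(H;{\bm Z})$ come from Theorem \ref{estimate1} and the Asian-call conditional estimates $G_{\cdot,2}$ from Theorem \ref{estimate2}, and since differentiation in the model parameters commutes with the (finite-dimensional Gaussian) conditional expectation, the decomposition is valid term by term for $delta$, $gamma$ and $vega$ alike.

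The only genuinely delicate point is justifying that one may split the conditional expectation across the three pieces and, underlying that, that the original Greek representation \eqref{MV_formu} (hence $\hat\theta_d$) applies to each summand: the binary payoff ${\bm 1}\{\bar S_T>H\}$ is not Lipschitz, but it is locally integrable with $f(\bar{\bm S}_T)\in L^2$, so Lemma \ref{L_O} covers it, exactly as in Example \ref{ex1}; the two call payoffs are Lipschitz with linear growth, so they are covered a fortiori, and Theorem \ref{THM:bias} even gives asymptotic unbiasedness for those. One should also check the integrability needed to interchange the conditional expectation with the finite sum, but each of the three conditional estimates is shown to exist (indeed to be $C^\infty$) in Theorems \ref{estimate1} and \ref{estimate2}, so the sum is well defined and the interchange is immediate. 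I would therefore present the proof as: (i) the algebraic payoff identity above, (ii) linearity of $\hat\theta_d$ in $f$ together with the applicability of the Malliavin representation to each piece via Lemma \ref{L_O}, and (iii) linearity of the conditional expectation, yielding the three displayed formulae.
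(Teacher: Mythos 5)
Your proposal is correct and matches the route the paper intends: the stated formulae are exactly the linear combination coming from the payoff identity $(\bar S_T-K)^+{\bm 1}\{\bar S_T\le H\}=(\bar S_T-K)^+-(\bar S_T-H)^+-(H-K){\bm 1}\{\bar S_T>H\}$ (with $H>K$), combined with the fact that the Malliavin weight does not depend on $f$ and with linearity of the conditional expectation, which is the same computation the paper performs by ``imitating the steps in Example \ref{ex1}'' (there one splits the integration range $\psi_d(K)<X_1\le\psi_d(H)$ instead of the payoff, an equivalent bookkeeping). Your attention to Lemma \ref{L_O} for the non-Lipschitz binary piece and to the integrability needed to split the conditional expectation is consistent with the paper's treatment.
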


 Again, we can easily see that all estimates given above are infinitely times differentiable. 

\end{example}

\section{Numerical experiments}
\label{sec:NE}
n this section, we present numerical experiments for Examples \ref{ex1}-\ref{ex3} in Section \ref{Sub_ill} by comparing the variance reduction factors (VRFs) of different methods to illustrate the effectiveness of  QMC-CMV method . We choose RQMC points instead of QMC points since the RQMC points are random which allow the variance to be estimated.   
 
Let ${\cal P}=\{{\bm u}_1,\cdots,{\bm u}_N\}$ be the $(d-1)$-dimensional QMC points.We generate $M$ independent random versions ${\cal P}_j=\{{\bm u}_1^{(j)},\cdots,{\bm u}_N^{(j)}\}$, $j=1,\cdots, M$. For each batch ${\cal P}_j$, we compute the estimate
 \begin{equation*}
\begin{aligned}
Q_{N}^{(j)}=\frac{1}{N}\sum_{i=1}^N G({\bm u}_i^{(j)}),
\end{aligned}
\end{equation*}
where $G({\bm Z})$ is the payoff function. The ultimate estimate is given by
 \begin{equation*}
\begin{aligned}
\tilde Q=\frac{1}{M}\sum_{j=1}^M Q_N^{(j)},
\end{aligned}
\end{equation*}
and the variance of the estimate $\tilde Q$ is estimated by
 \begin{equation*}
\begin{aligned}
\sigma^2_{\text{RQMC}}=\frac{1}{M(M-1)}\sum_{j=1}^M(Q_{N}^{(j)}-\tilde Q)^2.
\end{aligned}
\end{equation*}
In our experiments, the number of samples $N$ and the number of batches $M$ are the same for each method. We take $H=120$ in the up-and-out Asian call options. In each numerical experiment, there are $M=500$ independent batches of simulation and each batch contains $N=2^{15}$ samples. The Malliavin (MV) estimate in MC is chosen as the benchmark, and we denote it by MC-MV. Let the variance of the MC-MV be $V_0^2$. If the variance of an alternative estimate is $V^2$, then the VRF of this estimate with respect to MC-MV is given by
 \begin{equation*}
\begin{aligned}
\text{VRF}:=\frac{V_0^2}{V^2}.
\end{aligned}
\end{equation*}

In QMC setting, GPCA method is used as the PGM to reduce the effective dimension (due to its superiority in Section \ref{Sec-QMC}) and the scrambled Sobol' points proposed by \citet{Matousek98} are used as the RQMC points.  

As pointed out by \citet{Hull22}, a risky asset typically has a volatility between $15\%$ and $60\%$. Following the usual parameters selection in simulation and without loss of generality (see \cite{Glasserman04,Fournie99,Zhang20,Xiao18}), we set $S_0=100$, $\sigma=0.2, 0.4$, $r=0.1$, $T=1$ for all examples and set $d=64, 256$, and $K=90, 100, 110$. The option is in the money if $K=90$, at the money if $K=100$, and out of money if $K=110$. 
 
For each Greek, we compare the MV method and the CMV method. The MV method includes MC-MV and QMC-MV, which combine MV method with MC and QMC, respectively. The CMV method includes MC-CMV and QMC-CMV, which combine CMV method with MC and QMC, respectively.

In this paper, all the programs are run under macOS using MATLAB (version 8.4.0). The computer is a MacBook Pro laptop with Intel Core i5 Duo CPU @ 2.4 GHz, 8 GB of RAM. 

The results for binary Asian options, Asian call options and up-and-out Asian call options are presented in Tables \ref{table1}-\ref{table3-2}, respectively . We observe the following from the experiments.

\begin{table} 
\caption{The VRFs for binary Asian options with payoff function $f={\bm 1}\{\bar S_T>K\}$.}\label{table1}
\begin{tabular}{lllllll}
\hline\noalign{\smallskip}
 Greeks & K & d &MC-MV &QMC-MV &MC-CMV &QMC-CMV\\
\noalign{\smallskip}\hline\noalign{\smallskip}
delta & 90 & 64&1            &11&1&\bf 55,545 \\ 
  &  & 128   &1  &12&1&\bf 41,135 \\ 
  & 100    & 64     &1  &11 &1&\bf 35,126 \\ 
   &  & 128  &1  &8&1&\bf 21,778 \\ 
    & 110   & 64    &1  & 8&1&\bf 38,228 \\ 
     &    &128    &1  &7&1&\bf 28,820 \\ 
     \noalign{\smallskip}\hline\noalign{\smallskip}
     gamma & 90 & 64&1          &5&1&\bf 5,647 \\ 
  &  & 128   &1  &3&1&\bf 4,113 \\ 
  & 100    & 64     &1  &6 &1&\bf 7,154 \\ 
   &  & 128  &1  &3&1&\bf 7,508 \\ 
    & 110   & 64    &1  &4&1&\bf  4,678\\ 
     &    &128    &1  &3&1&\bf 3,565 \\ 
   \noalign{\smallskip}\hline\noalign{\smallskip}
  vega& 90 & 64&1            &5&1&\bf 8,139 \\ 
  &  & 128   &1  &4&1&\bf 5,742 \\ 
  & 100    & 64     &1  &4 &1&\bf 6,582 \\ 
   &  & 128  &1  &4&1&\bf 6,738 \\ 
    & 110   & 64    &1  & 4&1&\bf 6,154 \\ 
     &    &128    &1  &3&1&\bf 6,415 \\ 
\noalign{\smallskip}\hline
\end{tabular}
\end{table}

\begin{table} 
\caption{The VRFs for binary Asian options with payoff function $f={\bm 1}\{\bar S_T>K\}$ with $\sigma=0.4$.}\label{table1_sigma}
\begin{tabular}{lllllll}
\hline\noalign{\smallskip}
 Greeks & K & d &MC-MV &QMC-MV &MC-CMV &QMC-CMV\\
\noalign{\smallskip}\hline\noalign{\smallskip}
delta & 90 & 64&1            &11 &1&\bf 32,601 \\ 
  &  & 128   &1  &9 &1&\bf 18,763 \\ 
  & 100    & 64     &1  &12 &1&\bf 29,189 \\ 
   &  & 128  &1  &7&1&\bf 14,577 \\ 
    & 110   & 64    &1  & 8&1&\bf 29,624 \\ 
     &    &128    &1  &6&1&\bf 17,452 \\ 
     \noalign{\smallskip}\hline\noalign{\smallskip}
     gamma & 90 & 64&1          &5&1&\bf 4,368 \\ 
  &  & 128   &1  &3&1&\bf 3,966 \\ 
  & 100    & 64     &1  &4 &1&\bf 4,517 \\ 
   &  & 128  &1  &3&1&\bf 3,642 \\ 
    & 110   & 64    &1  &4&1&\bf  3,013\\ 
     &    &128    &1  &3&1&\bf 2,726 \\ 
   \noalign{\smallskip}\hline\noalign{\smallskip}
  vega& 90 & 64&1            &4&1&\bf 5,675 \\ 
  &  & 128   &1  &3&1&\bf 6,019 \\ 
  & 100    & 64     &1  &3 &1&\bf 5,209 \\ 
   &  & 128  &1  &3&1&\bf 5,027 \\ 
    & 110   & 64    &1  & 4&1&\bf 3,907 \\ 
     &    &128    &1  &3&1&\bf 3,937 \\ 
\noalign{\smallskip}\hline
\end{tabular}
\end{table}

 \begin{table} 
\caption{The Greek values  ($\times 10^{-3}$) for binary Asian options with payoff function $f={\bm 1}\{\bar S_T>K\}$.}\label{table1-1}
\begin{tabular}{lllllll}
\hline\noalign{\smallskip}
 Greeks & K & d &MC-MV &QMC-MV &MC-CMV &QMC-CMV\\
\noalign{\smallskip}\hline\noalign{\smallskip}
delta & 90 & 64&  14.083    &14.053&14.071&14.056 \\ 
  &  & 128   &   14.024      &14.016&13.999&14.015  \\ 
  & 100    & 64     &    29.204      &29.188&29.187&29.185   \\ 
   &  & 128  &     29.179       &  29.199 &   29.193     & 29.198\\ 
    & 110   & 64   &   27.802 & 27.802&27.812&27.805\\ 
     &    &128    &27.747     &27.728&27.747&27.730\\ 
     \noalign{\smallskip}\hline\noalign{\smallskip}
gamma & 90 & 64&-1.7400          &-1.7407&-1.7414& -1.7404 \\ 
  &  & 128   &-1.7216  &-1.7195&-1.7184&-1.7185 \\ 
  & 100    & 64     &-1.1893  &-1.1895 &-1.1890&-1.1896 \\ 
   &  & 128    &-1.1690&-1.1693&-1.1688 &-1.1713 \\ 
    & 110   & 64    &0.8000  &0.8018&0.8035&0.8010\\ 
     &    &128    &0.8229 &0.8208&0.8203&0.8196 \\ 
          \noalign{\smallskip}\hline\noalign{\smallskip}
vega& 90 & 64&-1092.7            &-1095.0&-1096.1&-1095.0\\ 
  &  & 128   &-1107.2  &-1109.5 &-1111.2 &-1110.3  \\ 
  & 100    & 64     & -831.14&-831.92  &-833.07  &-831.66 \\ 
   &  & 128   &-839.68&-840.38&  -839.58&-840.47\\ 
    & 110   & 64    &512.06  & 511.82&512.05&512.27\\ 
     &    &128    &510.34  &509.42&511.49&510.53 \\ 
\noalign{\smallskip}\hline
\end{tabular}
\end{table}

  \begin{table} 
\caption{The simulation times (unit: s) for binary Asian options with payoff function $f={\bm 1}\{\bar S_T>K\}$.}\label{table1-2}
\begin{tabular}{lllllll}
\hline\noalign{\smallskip}
 Greeks & K & d &MC-MV &QMC-MV &MC-CMV &QMC-CMV\\
\noalign{\smallskip}\hline\noalign{\smallskip}
delta & 90 & 64&   157         &172 & 123& 152 \\ 
  &  & 128   &  301   & 356&241 &302   \\ 
  & 100    & 64     &     147      &179 & 117&  150  \\ 
   &  & 128  &   309        & 369& 247      &312 \\ 
    & 110   & 64    &   156& 187 & 124&153 \\ 
     &    &128   &305&  370 & 252& 329  \\ 
   \noalign{\smallskip}\hline\noalign{\smallskip}
  gamma & 90 & 64&153           &191&132&156 \\ 
  &  & 128   &304         &387&267&348 \\ 
  & 100     &64&176          &202&132&173 \\ 
   &  & 128  &337&415&287&360 \\ 
    & 110   & 64    &176  &200&139&175\\ 
     &    &128    &344  &410&290&354\\ 
   \noalign{\smallskip}\hline\noalign{\smallskip}
  vega& 90 & 64&968            &949&1601&1794\\ 
  &  & 128   &3170  &3884&7197&7091 \\ 
  & 100    & 64     &930 &919 &1517&1549\\ 
   &  & 128  &3074  &3625&7190&6990 \\ 
    & 110   & 64    &994 & 1022&1615  &1644 \\ 
     &    &128    &3319  &3426&6897&6951 \\ 
\noalign{\smallskip}\hline
\end{tabular}
\end{table}

 \begin{table} 
\caption{The VRFs for Asian call options with payoff function $f=(\bar S_T-K)^+$.}\label{table2}
\begin{tabular}{lllllll}
\hline\noalign{\smallskip}
 Greeks & K & d &MC-MV &QMC-MV &MC-CMV &QMC-CMV\\
\noalign{\smallskip}\hline\noalign{\smallskip}
delta & 90 & 64            &1  &50&1&\bf 12,804 \\ 
  &  & 128   &1  &22 &1  &\bf 13,396 \\ 
  & 100    & 64     &1  &17 &1&\bf 5,809 \\ 
   &  & 128  &1  &12&1&\bf 6,695 \\ 
    & 110   & 64    &1  &9 &1&\bf 5,007\\ 
     &    &128    &1  &7&1&\bf 4,987 \\ 
   \noalign{\smallskip}\hline\noalign{\smallskip}
  gamma & 90 & 64&1          &5&1&\bf 919 \\ 
  &  & 128   &1  &3&1&\bf 1,132 \\ 
  & 100    & 64     &1  &3 &1&\bf 727 \\ 
   &  & 128  &1  &3&1&\bf 739 \\ 
    & 110   & 64    &1  & 3&1&\bf 481 \\ 
     &    &128    &1  &2&1&\bf 494 \\ 
   \noalign{\smallskip}\hline\noalign{\smallskip}
  vega& 90 & 64&1            &4&1&\bf 754 \\ 
  &  & 128   &1  &4&1&\bf 724 \\ 
  & 100    & 64     &1  &3 &1&\bf 537\\ 
   &  & 128  &1  &3&1&\bf 643 \\ 
    & 110   & 64    &1  & 3&1&\bf 437 \\ 
     &    &128    &1  &2&1&\bf 349\\ 
     \noalign{\smallskip}\hline
\end{tabular}
\end{table}

 \begin{table} 
\caption{The Greek values  ($\times 10^{-3}$) for Asian call options with payoff function $f=(\bar S_T-K)^+$.}\label{table2-1}
\begin{tabular}{lllllll}
\hline\noalign{\smallskip}
 Greeks & K & d &MC-MV &QMC-MV &MC-CMV &QMC-CMV\\
\noalign{\smallskip}\hline\noalign{\smallskip}
delta & 90 & 64& 884.51           &883.76&884.44&883.83 \\ 
  &  & 128   & 879.06       &879.46&879.72&879.28  \\ 
  & 100    & 64     &659.73            &660.31&659.90&660.19   \\ 
   &  & 128  &   655.69            &  655.75 &   655.53     & 655.78 \\ 
    & 110   & 64    &359.25  & 359.40&359.35&359.42\\ 
     &    &128    &355.36 &355.11&355.13&355.18 \\ 
   \noalign{\smallskip}\hline\noalign{\smallskip}
  gamma & 90 & 64&12.113          &12.196&12.101&12.157 \\ 
  &  & 128   &12.500  &12.351&12.349&12.365 \\ 
  & 100    & 64     &29.165 &29.220 &29.258&29.160 \\ 
   &  & 128  &29.218  &29.117&29.130&29.187 \\ 
    & 110   & 64    &30.802  &30.821&30.906&30.773\\ 
     &    &128    &30.589  &30.590&30.579&30.596\\ 
   \noalign{\smallskip}\hline\noalign{\smallskip}
  vega& 90 & 64&9119.8           &9090.1&9005.7&9069.4 \\ 
  &  & 128   &8552.1  &8530.9&8638.5&8563.6\\ 
  & 100    & 64     &20364  &20397 &20346&20379 \\ 
   &  & 128  &19939  &19921&19874&19922\\ 
    & 110   & 64    &21823  & 21773&21820&21796 \\ 
     &    &128    &21497  &21480&21470&21458 \\ 
\noalign{\smallskip}\hline
\end{tabular}
\end{table}

  \begin{table} 
\caption{The simulation times (unit: s) for Asian call options with payoff function $f=(\bar S_T-K)^+$.}\label{table2-2}
\begin{tabular}{lllllll}
\hline\noalign{\smallskip}
 Greeks & K & d &MC-MV &QMC-MV &MC-CMV &QMC-CMV\\
\noalign{\smallskip}\hline\noalign{\smallskip}
delta & 90 & 64& 143         &168&119&150\\ 
  &  & 128   & 286          &376&243&316  \\ 
  & 100    & 64     &152           &173&119&150  \\ 
   &  & 128  &     291          &  379  &  245    & 324 \\ 
    & 110   & 64    &  152&  182  &  125  & 164  \\ 
     &    &128    &301 &386&259&347\\ 
   \noalign{\smallskip}\hline\noalign{\smallskip}
  gamma & 90 & 64&159         &198&142&177 \\ 
  &  & 128   &313 &392&266&334 \\ 
  & 100    & 64     &153  &187 &137&172\\ 
   &  & 128  &299  &375&254&321 \\ 
    & 110   & 64    &147  &183&131&163\\ 
     &    &128    &292  &354&249&326\\ 
   \noalign{\smallskip}\hline\noalign{\smallskip}
  vega& 90 & 64&990            &1101&1772&1762\\ 
  &  & 128   &3412 &3415&6808&6862 \\ 
  & 100    & 64     &987  &998 &1704&1734 \\ 
   &  & 128  &3315  &3384&6813&7045 \\ 
    & 110   & 64    &1106  & 1182&2081&1739 \\ 
     &    &128    &3340  &3358&6185&6827 \\ 
\noalign{\smallskip}\hline
\end{tabular}
\end{table}

 \begin{table} 
\caption{The VRFs for up-and-out Asian call options with payoff function $f=(\bar S_T-K)^+{\bm 1}\{\bar S_T\le H\}$, where $H=120$.}\label{table3}
\begin{tabular}{lllllll}
\hline\noalign{\smallskip}
 Greeks & K & d &MC-MV &QMC-MV &MC-CMV &QMC-CMV\\
\noalign{\smallskip}\hline\noalign{\smallskip}
delta & 90 & 64&1            &2&1&\bf 28,707 \\ 
  &  & 128   &1  &2&1&\bf 20,327 \\ 
  & 100    & 64     &1  &2 &2& \bf  51,475 \\ 
   &  & 128  &1  &2&1&\bf 38,146\\ 
    & 110   & 64    &1  & 1&2&\bf 67,723 \\ 
     &    &128    &1  &1&1&\bf 37,032 \\ 
   \noalign{\smallskip}\hline\noalign{\smallskip}
  gamma & 90 & 64       &1   &2&1&\bf 1,355 \\ 
  &  & 128   &1  &1&1&\bf 1,052 \\ 
  & 100    & 64     &1  &2 &1&\bf 1,838 \\ 
   &  & 128  &1  &2&1&\bf 2,235 \\ 
    & 110   & 64    &1  & 1&2&\bf 3,032 \\ 
     &    &128    &1  &1&2&\bf 2,369 \\ 
   \noalign{\smallskip}\hline\noalign{\smallskip}
  vega& 90 & 64&1            &1&1&\bf 18,247 \\ 
  &  & 128   &1  &2&1&\bf 11,787 \\ 
  & 100    & 64     &1  &1 &2&\bf 18,495\\ 
   &  & 128  &1  &1&1&\bf 11,064 \\ 
    & 110   & 64    &1  &1  &2&\bf 12,718 \\ 
     &    &128    &1  &1&1&\bf 8,812 \\ 
     \noalign{\smallskip}\hline
\end{tabular}
\end{table}

  \begin{table} 
\caption{The Greek values  ($\times 10^{-3}$) for up-and-out Asian call options with payoff function $f=(\bar S_T-K)^+{\bm 1}\{\bar S_T\le H\}$, where $H=120$.}\label{table3-1}
\begin{tabular}{lllllll}
\hline\noalign{\smallskip}
 Greeks & K & d &MC-MV &QMC-MV &MC-CMV &QMC-CMV\\
\noalign{\smallskip}\hline\noalign{\smallskip}
delta & 90 & 64&      283.52      &283.79&283.73&283.64\\ 
  &  & 128   &    287.00      &286.92 &287.00&287.03 \\ 
  & 100    & 64     &  212.65            &212.83&212.77& 212.73   \\ 
   &  & 128  &   214.62     &   214.60& 214.70     & 214.58\\ 
    & 110   & 64    &64.706  &64.644&64.684&64.689\\ 
     &    &128    &64.987 &65.069&65.005&65.043  \\ 
   \noalign{\smallskip}\hline\noalign{\smallskip}
  gamma & 90 & 64&-47.024          &-47.043&-47.057&-47.035 \\ 
  &  & 128   &-46.568  &-46.552&-46.511&-46.533 \\ 
  & 100    & 64     &-16.453  &-16.452 &-16.468&-16.454 \\ 
   &  & 128  &-16.129  &-16.147&-16.165&-16.144 \\ 
    & 110   & 64    &-1.2664  &-1.2597&-1.2706&-1.2707\\ 
     &    &128    &-1.1690  &-1.1668&-1.1666&-1.1675 \\ 
   \noalign{\smallskip}\hline\noalign{\smallskip}
  vega& 90 & 64&-32905           &-32884&-32910&-32895 \\ 
  &  & 128   &-32828 &-32857&-32844&-32850 \\ 
  & 100    & 64     &-12151  &-12151 &-12154&-12156 \\ 
   &  & 128  &-12068  &-12062&-12080&-12071\\ 
    & 110   & 64    &-1152.0  & -1151.6&\-1152.5&-1151.1 \\ 
     &    &128    &-1111.0  &-1111.1&-1112.2&-1110.5\\ 
\noalign{\smallskip}\hline
\end{tabular}
\end{table}

  \begin{table} 
\caption{The simulation times (unit: s) for up-and-out Asian call options with payoff function $f=(\bar S_T-K)^+{\bm 1}\{\bar S_T\le H\}$, where $H=120$.}\label{table3-2}
\begin{tabular}{lllllll}
\hline\noalign{\smallskip}
 Greeks & K & d &MC-MV &QMC-MV &MC-CMV &QMC-CMV\\
\noalign{\smallskip}\hline\noalign{\smallskip}
delta & 90 & 64&    144       &167&128&153 \\ 
  &  & 128   &291         &350 &248&335  \\ 
  & 100    & 64     &150          &179&125&152  \\ 
   &  & 128  &   285            & 383 &   249  & 327\\ 
    & 110   & 64    &151  & 210&146&180\\ 
     &    &128    &333  &401&251&315 \\ 
   \noalign{\smallskip}\hline\noalign{\smallskip}
  gamma & 90 & 64&148          &202&170&173 \\ 
  &  & 128   &309  &340&261&326 \\ 
  & 100    & 64     &151  &197 &143&186 \\ 
   &  & 128  &289  &356&288&338 \\ 
    & 110   & 64    &150  &179&141&171\\ 
     &    &128    &303  &358&264&335\\ 
   \noalign{\smallskip}\hline\noalign{\smallskip}
  vega& 90 & 64&1034            &1053&1814&1926 \\ 
  &  & 128   &3364  &3541&7604&7482 \\ 
  & 100    & 64     &981  &995 &1745&1777 \\ 
   &  & 128  &3404  &3458&6985&7047 \\ 
    & 110   & 64    &1017  & 1057&1760&1788 \\ 
     &    &128    &3506  &3522&7635&7352 \\ 
\noalign{\smallskip}\hline
\end{tabular}
\end{table}

\begin{itemize}
 \item Among all methods, QMC-CMV method is the most effective method to calculate Greeks. This is because QMC-CMV method  improves the smoothness of the integrand, which is very significant in enhancing the computational efficiency of the QMC method. Furthermore, due to the improved smoothness of the integrands, GPCA method can be applied to reduce the effective dimension and increase the efficiency of the QMC method. 
\end{itemize}

\begin{itemize}
 \item QMC-CMV behaves much better than QMC-MV. However, in MC setting, the VRFs of  MC-CMV are nearly the same as  the VRFs of MC-MV. This is because MC method is not sensitive to the smoothness of the integrand. Moreover, CMV and MV in QMC setting behave better than those in MC setting, respectively. However, the VRFs of MV in QMC are no more than one hundred, while the VRFs of QMC-CMV can be larger than tens of thousands in some cases. This is because the integrands in CMV have good smoothness. 
\end{itemize}

\begin{itemize}
 \item The VRFs of QMC-CMV in binary Asian option and up-and-out Asian call option (Tables \ref{table1} and \ref{table3}) are more impressive than those in Asian call option (Table \ref{table2}). The reason is that the payoff function of Asian call option is continuous, whereas the payoff functions of binary Asian option and up-and-out Asian call option are not. Thus the effect of smoothing for binary Asian option and up-and-out Asian option is more significant than that for the Asian call option.
\end{itemize}

\begin{itemize}
 \item For $delta$ of all options (Tables \ref{table1}, \ref{table2} and \ref{table3}), the superiority of the QMC-CMV method is more impressive. The VRFs of QMC-CMV are larger than tens of thousands in most cases. For $gamma$ of all options (Tables \ref{table1}, \ref{table2} and \ref{table3}) and $vega$ of binary Asian options and Asian call options (Tables \ref{table1} and \ref{table2}), the VRFs of QMC-CMV are no more than ten thousands. However, for $vega$ of up-and-out Asian call options (Table \ref{table3}), the VRFs of QMC-CMV are larger than tens of thousands in most cases. A critical factor that leads to the above results is the different weight functions in different Greeks formulae (\ref{MV_formu}).
\end{itemize}

\begin{itemize}
 \item Dimensionality and strike price have some impact on the VRFs. For binary Asian options (Table \ref{table1}), for example, the VRFs of QMC-CMV for $delta$ are smaller when the dimension increases. For Asian call options (Table \ref{table2}), for example, the VRFs of QMC-CMV are smaller when the strike price is larger for all three Greeks. Generally, the high dimensionality has little impact on all methods in QMC setting since the GPCA method reduces the effective dimension, which lifts the curse of dimensionality.
\end{itemize}

\begin{itemize}
 \item From Tables \ref{table1-1}, \ref{table2-1} and \ref{table3-1}, we can see that there is no significant difference in Greek values between those methods. This implies the consistency of our estimates.
\end{itemize}

\begin{itemize}
 \item From Tables \ref{table1-2}, \ref{table2-2} and \ref{table3-2}, we can see that there is no significant difference in simulation times between those methods when considering $delta$ and $gamma$. However,  for $vega$, the simulation time of CMV is nearly twice as long as that of MV. A reasonable explanation is that the weight functions of $vega$ of the CMV estimates are much more complicated (see Theorem \ref{estimate1}, \ref{estimate2} and \ref{estimate3}).
\end{itemize}

\begin{itemize}
 \item From Tables \ref{table1} and \ref{table1_sigma}, we can see that the volatility has some impact on the VRFs. The VRFs of QMC-CMV are slightly smaller when the volatility increases. However, the robustness of our method with respect to the volatility still holds.
\end{itemize}

\begin{remark}
We only consider the one-dimensional simple Asian options as illustrations in Sections \ref{Sub_ill} and \ref{sec:NE} for convenience. However, the QMC-CMV method can also be implemented in the case of complex Asian options and multiple assets. Due to the complicated calculation, the results of those case are not presented in this paper. Moreover, since the weight functions of simple Asian option Greeks include $S_T$ as well (see Corollary \ref{L1C}) and there is no essential difference between the weight functions of simple and complex Asian options, the efficiency of the QMC-CMV in Section \ref{sec:NE} still holds when we deal with the complex Asian options.
\end{remark}


\section{Conclusion}
\label{sec:conclusion}

In this paper, we develop an integration by parts formula in the multi-dimensional Malliavin calculus and propose a new method to calculate the Greeks for Asian options. We obtain the Greeks formulae for both simple and complex continuous-time Asian options in the multi-asset Black-Scholes model by the integration by parts formula. By discretization we derive the simulation estimates of the Asian option Greeks. We prove the asymptotic convergence of the simulation estimates when the payoff function is nonnegative, continuous with linear growth. After then, by taking conditional expectations, we get the CMV estimates for Greeks. We show how to calculate the conditional expectations analytically for binary Asian options, Asian call options and up-and-out Asian call options. We find that the smoothness of the CMV estimate is enhanced, leading to a notable efficiency improvement in QMC. Even for options with continuous payoffs, taking conditional expectations is of great significance because the CMV estimates are smoothed. The numerical results show that, for binary Asian options, Asian call options and up-and-out Asian call options, the VRFs of QMC-CMV methods can be larger than tens of thousands in some cases compared with the MC-MV method.

For further work, the bias of the simulation estimates for Asian option Greeks in general cases need to be studied. More general formulae for CMV estimates of Asian option Greeks are also expected to be developed. Moreover, extending the QMC-CMV method for Greeks to other models is a subject of ongoing research.

 \section*{acknowledgements}
The authors are very grateful to the editors and the anonymous referee for their helpful suggestions and comments. The work is funded by the National Natural Science Foundation of China (No. 72071119).

 \section*{Conflict of interest}
 The authors declare that they have no conflict of interest. All authors certify that they have no affiliations with or involvement in any organization or entity with any financial interest or non-financial interest in the subject matter or materials discussed in this manuscript.

   \bibliographystyle{plain}

\end{CJK}
\end{document}